\newfont{\gothic}{eufm10}
\def\Z{{\mathbb{Z}}}                   \def\R{{\RR}}
\def\RR{{\mathbb{R}}}        \def\N{{\mathbb{N}}}        \def\Q{{\mathbb{Q}}}
        \newtheorem{theorem}{Theorem}[section]
\newtheorem{lemma}[theorem]{Lemma}
\newtheorem{proposition}[theorem]{Proposition}
\newtheorem{corollary}[theorem]{Corollary}
\newtheorem{definition1}[theorem]{Definition}
\newenvironment{definition}{\begin{definition1}\rm}{\end{definition1}}
\newtheorem{remark1}[theorem]{Remark}
\newenvironment{remark}{\begin{remark1}\rm}{\end{remark1}}
\newtheorem{example1}[theorem]{Example}
\def\barray{\begin{eqnarray*}}             \def\earray{\end{eqnarray*}}
\def\beq{\begin{equation}} \def\eeq{\end{equation}}
\makeatletter \title{\Large Continuity of the Peierls barrier and robustness of laminations}  
\author{Bla\v{z} Mramor\thanks{Institute of Mathematics, Albert-Ludwigs-Universit\"at, Freiburg, Germany, {\tt blazmramor@hotmail.com}.}  \ and Bob Rink\thanks{Department of Mathematics, VU University Amsterdam, The Netherlands, {\tt b.w.rink@vu.nl}.}\ . }
\begin{document}  \hyphenation{mini-mi-zers mini-mizer con-secu-tive}
\newcommand{\X}{\mathbb{X}}
\newcommand{\M}{\mathcal{M}}
\newcommand{\B}{\mathcal{B}}

\newcommand{\p}{\partial}
\maketitle
\noindent 

\abstract{We study the Peierls barrier $P_{\omega}(\xi)$ for a broad class of monotone variational problems. These problems arise naturally in solid state physics and from Hamiltonian twist maps. 

We start by deriving an estimate for the difference $\left| P_{\omega}(\xi) - P_{q/p}(\xi) \right|$ of the Peierls barriers of rotation numbers $\omega\in \R$ and $q/p\in \Q$. A similar estimate was obtained by Mather \cite{Matherpeierls} in the context of twist maps, but our proof is different and applies more generally. It follows from the estimate that $\omega\mapsto P_{\omega}(\xi)$ is continuous at irrational points. 

Moreover, we show that the Peierls barrier depends continuously on parameters and hence that the property that a monotone variational problem admits a lamination of minimizers of rotation number $\omega\in \R\backslash \Q$, is open in the $C^1$-topology.}

\section{Introduction}
We shall be interested in ``scalar monotone variational recurrence relations'' of the form
\begin{align}\label{recrelintro}
R(x_{i-r}, \ldots, x_{i+r}) =\sum_{j\in \Z} \p_{i}S(x_j, x_{j+1}, \ldots, x_{j+r-1}, x_{j+r}) = 0\ \mbox{for}\ x_i\in \R \ \mbox{and}\ i\in \Z\, .
\end{align}
Such problems are determined by a ``local potential'' $S:\R^{r+1}\to \R$ that describes the interaction between particles. The number $r\geq 1$ is the range of the interaction.  

In Section \ref{setting}, we specify conditions on the local potential $S$ that guarantee that (\ref{recrelintro}) is a ``monotone'' recurrence relation. These monotone problems arise in various contexts. For example in the study of Hamiltonian twist maps of the annulus, where $S=S(x_j, x_{j+1})$ is the generating function of the map, see \cite{gole01} or \cite{MatherForni}. Models with higher ranges of interaction appear in conservative lattice dynamics and solid state physics. A prototypical example to keep in mind is the generalized Frenkel-Kontorova problem
\begin{align}\nonumber
\sum_{k=1}^{r} a_k ( x_{i-k}-2x_i + x_{i+k}) = V'(x_i) \ \mbox{with}\ a_1,\ldots, a_r>0\ \mbox{and}\  V:\R\to\R\ \mbox{a periodic function}.
\end{align}
This equation describes the equilibrium states of a dislocation model in which particles experience a periodic background force and interact linearly with their neighbours. 
 
Problems of the form (\ref{recrelintro}) have been studied extensively in the scope of Aubry-Mather theory. One of the key results in this theory is that (\ref{recrelintro})  supports solutions of every rotation number $\omega\in \R$. They are the Birkhoff global minimizers. We shall denote these by 
$$\mathcal{M}_{\omega}:=\{ \, x:\Z\to\R\, |\, x \ \mbox{is a Birkhoff global minimizer of (\ref{recrelintro}) with rotation number} \ \omega\, \}\, .$$ 
When $\omega \in \R\backslash \Q$, then $\mathcal{M}_{\omega}$ is strictly ordered \cite{bangert87} and its recurrent subset is the well-known Aubry-Mather set of rotation number $\omega$. 
Following Moser \cite{moser86, moserbrasil, moser89}, we say that $\mathcal{M}_{\omega}$ is a {\it foliation} if for all $\xi\in \R$ there is an element $x\in \mathcal{M}_{\omega}$ that satisfies the initial condition $x_0=\xi$. Otherwise, one says that $\mathcal{M}_{\omega}$ forms a {\it  lamination}. In the setting of twist maps, foliations correspond to rotational invariant circles. These circles are important because they form the energy transport barriers for the map. In solid state physics models, foliations are associated to a ``sliding'' effect, and laminations to a ``pinning'' of particles. 

It is thus important to know whether $\mathcal{M}_{\omega}$ is a foliation or a lamination for a given local potential $S$. Or at least, setting a slightly more modest goal, one could ask what is the structure of $\mathcal{M}_{\omega}$ for a ``generic'' local potential. It turns out that the answer to this latter question is already quite delicate: it depends on the degree of irrationality of the rotation number. For example, in the case of Hamiltonian twist maps \cite{SalamonZehnder} and for some generalizations of the Frenkel-Kontorova model \cite{llavecalleja}, it follows from KAM theory that the set of local potentials for which $\mathcal{M}_\omega$ is a smooth foliation (and hence defines an invariant circle for the twist map), is open in the $C^k$-topology, under the conditions that $k$ is large enough and that the rotation number $\omega\in \R\backslash \Q$ is sufficiently irrational (e.g. Diophantine). 

On the other hand, a converse KAM theory for ``not very irrational'' rotation numbers has also been developed. For Hamiltonian twist maps it has for example been shown by Mather   \cite{Matherdestruction} that if the rotation number $\omega$ is rational or Liouville, then the set of generating functions $S(x_j, x_{j+1})$ for which $\mathcal{M}_{\omega}$ is a lamination (and hence does not define an invariant circle but for example a cantorus), is dense in the $C^{k}$-topology. In other words: if a twist map supports an invariant circle of a rational or Liouville rotation number, then this circle can be destroyed by an arbitrarily small perturbation of the generating function $S$. This destruction result was modified for the analytic setting by Forni \cite{Fornianalytic}, but the extension to general problems of the form (\ref{recrelintro}) has  been developed  only recently \cite{destruction} by the authors. 

The first goal of this paper is to develop a versatile framework for the study of foliations and laminations. In fact, to distinguish foliations from laminations, we shall make use of the Peierls barrier $P_{\omega}=P_{\omega}(\xi)$. This function can be thought of as a dislocation energy and it measures the ``minimal action'' of a Birkhoff sequence $x$ of rotation number $\omega$, under the constraint that $x_0=\xi$. In particular, it holds that $P_{\omega}\equiv 0$ if and only if $\mathcal{M}_{\omega}$ is a foliation. The first main result of this paper is the following technical statement on the Peierls barrier:

\begin{theorem}\label{th1}
Let $S$ be a local potential satisfying conditions {\bf A}-{\bf C} of Section \ref{setting} and let $L>0$. There exists a constant $C$ so that for all $\omega\in \R$ and $q/p\in \Q$ with $|\omega|, |q/p| \leq L$,
\begin{align}\label{estimateintro}
\left| P_{\omega}(\xi) - P_{q/p}(\xi) \right| \leq C\left( 1/|p| + |p\omega - q| \right) \ \mbox{uniformly in}\ \xi\, .
\end{align}
Consequently, the map $\omega\mapsto P_{\omega}(\xi)$ is continuous at irrational and H\"older continuous at Diophantine points. 
\end{theorem}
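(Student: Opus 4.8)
The plan is to prove the quantitative bound (\ref{estimateintro}) first and then read off the continuity and H\"older statements from it by a routine argument on rational approximation. For the deduction: given an irrational $\omega$ and any sequence $\omega_k\to\omega$, fix a rational $q/p$ with $|p\omega-q|$ and $1/|p|$ both small --- a convergent of $\omega$ will do, since then $|p\omega-q|<1/|p|$ --- apply (\ref{estimateintro}) to $\omega$ and to $\omega_k$, and combine by the triangle inequality using $|p\omega_k-q|\le|p\omega-q|+|p|\,|\omega-\omega_k|$; as $p$ is fixed once $q/p$ is chosen, the last term is negligible for large $k$, so $\sup_\xi|P_{\omega_k}(\xi)-P_\omega(\xi)|\to 0$. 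For H\"older continuity at a Diophantine $\omega$ one instead balances the denominator $|p|$ against $|\omega-\omega'|$, the Diophantine condition guaranteeing that suitable convergents exist at every relevant scale and thereby fixing the exponent; plain continuity survives even at Liouville points.

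The heart of the matter is (\ref{estimateintro}), which I would obtain from two symmetric comparison arguments, one bounding $P_\omega(\xi)$ above by $P_{q/p}(\xi)$ plus the error term and one in the reverse direction. In both I would use the characterization of the Peierls barrier as a limit, over growing windows, of the difference between the minimal action of a Birkhoff configuration constrained to pass through $\xi$ at site $0$ and that of an unconstrained competitor carrying the same ground-state boundary data (as set up in Section \ref{setting}). It then suffices to turn a near-optimal configuration for one rotation number into an admissible competitor for the other at a controlled action cost: to bound $P_\omega(\xi)$ from above I would take a near-optimal configuration for $P_{q/p}(\xi)$ --- a heteroclinic between two $(q,p)$-periodic minimizers through $\xi$ --- and splice long finite blocks of it into Birkhoff ground states of rotation number $\omega$ across transition windows, the reverse inequality being analogous with the roles of $\omega$ and $q/p$ exchanged. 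The error then separates into two contributions. First, the minimal average action $\alpha$ is convex in the rotation number, hence locally Lipschitz, so over the $\sim|p|$ sites needed to carry one period of the rational block the mismatch of the two action densities sums to a term of order $\mathrm{Lip}(\alpha)\,|\omega-q/p|\cdot|p|=\mathrm{Lip}(\alpha)\,|p\omega-q|$; this is feasible because, being Birkhoff, minimizers of rotation numbers within $|\omega-q/p|$ of one another drift apart by only $|p\omega-q|$ over $|p|$ sites, so the splice itself is cheap. Second, truncating the defining limit at window length $\sim|p|$ (equivalently, the residual cost of reconciling the periodic and quasi-periodic structures over a finite window) contributes the remaining term, of order $1/|p|$.

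The step I expect to be the main obstacle is pinning down this second contribution and showing it is genuinely of order $1/|p|$ rather than $O(1)$: a priori the pointwise distance between Birkhoff configurations of rotation numbers $\omega$ and $q/p$ is only bounded, not small, so one cannot simply interpolate across the transition, and, in the absence of uniform hyperbolicity near Liouville rotation numbers, the rate at which the finite-window minimal-action difference approaches $P_\omega(\xi)$ is not obvious. Controlling it requires placing the splice where the geometry of the Aubry--Mather sets keeps the two configurations as close as possible, quantifying the window-truncation error, and then invoking the uniform convexity of $S$ built into conditions {\bf A}--{\bf C} to bound the residual excess action. I would prepare the ground with a handful of preliminary lemmas --- uniform (Lipschitz) a priori bounds on Birkhoff minimizers depending only on $L$, local Lipschitz dependence of $\alpha$ on the rotation number, and a quantitative continuity of minimizers and Aubry--Mather sets in $\omega$ over bounded windows --- and I expect the detailed bookkeeping of the splice, matching block boundary values and summing the per-site errors, to be the technical core of the argument.
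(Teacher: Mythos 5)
Your plan to first prove the quantitative bound and then deduce continuity and H\"older continuity via best rational approximations is exactly the paper's strategy, and that part is fine. The trouble is in the heart of the argument. The paper does not compare $P_\omega(\xi)$ and $P_{q/p}(\xi)$ directly for irrational $\omega$: it defines $P_\omega(\xi)$ as the limit $\lim_{Q/P\to\omega}P_{Q/P}(\xi)$ and reduces the whole theorem to a \emph{rational-to-rational} comparison (Theorem \ref{mainthm}), $\left|P_{Q/P}(\xi)-P_{q/p}(\xi)\right|\le C\left(1/|p|+|p(Q/P)-q|\right)$, whose ingredients (periodic minimizers, periodic $\xi$-minimizers, the periodic action $W_{p,q}$) are all finite-dimensional and well controlled. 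Your route instead presupposes a ``window'' characterization of $P_\omega(\xi)$ for irrational $\omega$, which is a nontrivial prerequisite the paper deliberately sidesteps. More seriously, you have correctly located the crux --- showing the transition/truncation cost is genuinely $O(1/|p|)$ rather than $O(1)$, since Birkhoff configurations of nearby rotation numbers are only bounded apart pointwise --- but the proposal only gestures at a fix (``placing the splice where the geometry of the Aubry--Mather sets keeps the two configurations as close as possible''). The paper's answer to exactly this question is its near-periodicity theorem (Theorem \ref{regularitytheorem}): a pigeonhole argument on Birkhoff sequences producing a window $[i_0,i_0+r-1]$, $-|p|<i_0\le 0$, where $\sum_{j=i_0}^{i_0+r-1}|x_{j+p}-q-x_j|\le E(1/|p|+|p(Q/P)-q|)$. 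This lemma, not any convexity of the average action, is what makes the $1/|p|$ term come out, and it is the genuinely new technical content needed to go beyond Mather's range-one case. Without an equivalent of it your argument does not close.

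Two smaller points. First, the appeal to ``uniform convexity of $S$ built into conditions \textbf{A}--\textbf{C}'' is a misreading: \textbf{B} is a ferromagnetic/twist (sign) condition on mixed second partials, not convexity, and the generalized Frenkel--Kontorova potential has a non-convex on-site term $V$; no such convexity is available. Second, invoking Lipschitz continuity of the minimal average action $\alpha$ to produce the $|p\omega-q|$ term is a plausible heuristic, but it is not how the estimate arises: the Peierls barrier already has the unconstrained ground energy subtracted, so the ``density mismatch'' does not enter as a separate contribution. In the paper both error terms come from the near-periodicity estimate combined with the Lipschitz estimate on the finite action (Proposition \ref{lipschitz}); convexity of $\alpha$ appears only implicitly and only as a \emph{sign} --- the gluing term $W_{B_1}(x^-)+W_{B_2}(z^-)-W_B(y^-)$ is shown to be $\ge 0$ by a direct splicing/subadditivity argument, no Lipschitz rate needed.
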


\noindent For Hamiltonian twist maps, precisely the same result was obtained by Mather \cite{Matherpeierls} and Theorem \ref{th1} generalizes his result to arbitrary recurrence relations of the form (\ref{recrelintro}). We should like to stress that Mather's derivation of (\ref{estimateintro}) relies on a specific property of recurrence relations that stem from a twist map: it is the fact that when $r=1$, the minimizers of (\ref{recrelintro}) can only cross once. This special crossing property does not hold for problems with a longer range of interaction though. Consequently, our extension of the results in \cite{Matherpeierls} requires a rather new approach that we present in this paper. We moreover conjecture that our approach can be extended to other settings in which Aubry-Mather theory has been applied, such as monotone problems on lattices \cite{llave-lattices, llave-valdinoci} and elliptic PDEs \cite{moser89, moser86}.

A byproduct of Theorem \ref{th1} is that $P_{\omega}(\xi)$ depends continuously on the local potential. As a consequence, it is an open property for a local potential to support a lamination of an irrational rotation number. This is the second main result of this paper:
\begin{theorem}\label{thm1intro}
Let $S$ be a local potential satisfying conditions {\bf A}-{\bf C} of Section \ref{setting} and let $\omega\in \R\backslash \Q$. Assume that $\mathcal{M}_\omega$ is a  lamination for $S$. Then there exists a $\delta>0$, such that for all local potentials $S^{\delta}$ with $\|S^{\delta}-S\|_{C^0}< \delta$ and $\|S^{\delta}-S\|_{C^1}\leq 1$, still $\mathcal{M}_{\omega}$ is a lamination.
\end{theorem}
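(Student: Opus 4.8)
The plan is to derive Theorem~\ref{thm1intro} from Theorem~\ref{th1} together with the dichotomy ``$P_\omega\equiv 0$ if and only if $\M_\omega$ is a foliation'': I will show that the Peierls barrier is stable under the perturbations allowed in the statement. Write $P^T_\omega$ for the Peierls barrier of a local potential $T$. Since $\M_\omega$ is a lamination for $S$, the barrier $P^S_\omega$ is not identically zero, and, because $P^S_\omega\ge 0$, there is a point $\xi_0\in\R$ with $P^S_\omega(\xi_0)=:c>0$. It therefore suffices to prove that $P^{S^\delta}_\omega(\xi_0)>0$ once $\delta$ is small enough: then $P^{S^\delta}_\omega\not\equiv 0$, so $\M_\omega$ is a lamination for $S^\delta$ as well. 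In fact I will establish that $\|P^{S^\delta}_\omega-P^{S}_\omega\|_{C^0(\R)}\to 0$ as $\delta\downarrow 0$, uniformly over the admissible family of $S^\delta$ --- the ``continuity of the Peierls barrier in the potential'' announced in the abstract.

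The mechanism is to interpose a well-chosen rational approximant of $\omega$. Pick a continued-fraction convergent $q/p$ of $\omega$ (with $|p|$ as large as one likes, since $\omega$ is irrational), so that $|p\omega-q|<1/|p|$. Applying Theorem~\ref{th1} with some fixed $L>|\omega|$ (which also bounds the convergents) yields, both for $T=S$ and for $T=S^\delta$,
\begin{align*}
\bigl|P^T_\omega(\xi)-P^T_{q/p}(\xi)\bigr|\;\le\;C\bigl(1/|p|+|p\omega-q|\bigr)\;<\;2C/|p|\qquad\text{uniformly in }\xi ,
\end{align*}
with a constant $C$ that is valid simultaneously for $S$ and for every $S^\delta$ in the family. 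Hence, for all $\xi$,
\begin{align*}
\bigl|P^{S^\delta}_\omega(\xi)-P^{S}_\omega(\xi)\bigr|
&\le\bigl|P^{S^\delta}_\omega(\xi)-P^{S^\delta}_{q/p}(\xi)\bigr|+\bigl|P^{S^\delta}_{q/p}(\xi)-P^{S}_{q/p}(\xi)\bigr|+\bigl|P^{S}_{q/p}(\xi)-P^{S}_\omega(\xi)\bigr| \\
&\le\;\frac{4C}{|p|}+\bigl|P^{S^\delta}_{q/p}(\xi)-P^{S}_{q/p}(\xi)\bigr| .
\end{align*}
Given $\varepsilon>0$, first fix a convergent $q/p$ with $4C/|p|<\varepsilon/2$. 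For this now-fixed rational, $P^T_{q/p}$ is assembled from minimal actions of (asymptotically) $(p,q)$-periodic Birkhoff configurations; the coercivity in conditions {\bf A}--{\bf C} survives the perturbation because $S^\delta\ge S-\delta$, so the relevant configurations stay in a compact set independent of $\delta$, and replacing $S$ by $S^\delta$ moves $P^{S^\delta}_{q/p}$ away from $P^{S}_{q/p}$ by at most a constant multiple of $|p|\,\|S^\delta-S\|_{C^0}<|p|\,\delta$, uniformly in $\xi$. For $\delta$ small relative to the (now fixed) $p$ this is $<\varepsilon/2$. Combining the two displays gives $\|P^{S^\delta}_\omega-P^{S}_\omega\|_{C^0(\R)}<\varepsilon$, and taking $\varepsilon=c$ produces the desired $\delta$.

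The step carrying the real content is the claim that the constant $C$ of Theorem~\ref{th1} may be chosen uniform over the whole family $\{S^\delta:\|S^\delta-S\|_{C^1}\le 1\}$ --- and this is exactly why that hypothesis appears in Theorem~\ref{thm1intro}. To see it one must revisit the proof of Theorem~\ref{th1} and check that $C$ depends on the potential only through $L$ and a $C^1$-type bound --- which $\|S^\delta-S\|_{C^1}\le 1$ makes uniform --- and not through finer, less stable data. The other ingredients are comparatively routine: that each admissible $S^\delta$ is still a local potential to which Aubry--Mather theory, and hence the Peierls barrier $P^{S^\delta}$, applies (this is implicit in the formulation of the theorem); that for a fixed rational $q/p$ the map $T\mapsto P^T_{q/p}$ is $C^0$-continuous, which follows from the variational description of $P^T_{q/p}$ together with the coercivity/compactness used above --- the bookkeeping for this barrier, which may be defined through a limit, should be carried out carefully; and the elementary three-term triangle inequality. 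I expect no difficulty beyond the uniformity of $C$.
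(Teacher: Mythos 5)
Your proposal is correct and takes essentially the same route as the paper: the paper proves Theorem~\ref{thm1intro} (restated as Theorem~\ref{robustness}) via Proposition~\ref{continuous}, whose proof is precisely your triangle inequality interposing a rational $q/p$, the $|P|\,\delta$ bound for the rational barrier, and the observation that the constant $C$ of Theorem~\ref{th1}/\ref{mainthm} depends on the potential only through $\|S\|_{C^1(\X_K)}$, hence is uniform over $\{S^\delta:\|S^\delta-S\|_{C^1}\le 1\}$. You correctly isolate the uniformity of $C$ as the step carrying the real content and the reason the $C^1$ hypothesis is needed.
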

\noindent 
Combined with the results in \cite{destruction}, Theorem \ref{thm1intro} implies that if $\omega$ is a Liouville number, then the set of local potentials for which $\mathcal{M}_{\omega}$ is a lamination, is not only dense but also open in the $C^{k}$-topology, for any $k\geq 2$.  All the other robustness results for laminations that the authors are aware of, apply only to Hamiltonian twist maps. Moreover, they are based on Green's criterium and require hyperbolicity of the corresponding twist map. See for example \cite{AubryMackayBaesens} and \cite{percival}.

\section{Aubry-Mather theory}\label{AM theory}

In this section we present some classical results from Aubry-Mather theory that we will use in this paper. The first results of Aubry-Mather theory were obtained for twist maps, independently by Aubry and Le Daeron \cite{AubryDaeron} and by Mather \cite{MatherTopology}. Many of these results remain true for more general monotone variational problems. All results in this section can be found in the existing literature and in particular in \cite{llave-lattices,llave-valdinoci, llave-valdinoci07,MramorRink1, destruction}.

\subsection{Monotone variational recurrence relations}\label{setting}
Let $r\geq 1$ be an integer and $S:\R^{r+1}\to \R$ a twice continuously differentiable function. We are interested in solutions $x:\Z\to\R$ of the recurrence relation
\begin{align}\label{recrel}
R(x_{i-r}, \ldots, x_{i+r}) := \sum_{j\in \Z} \p_{i}S(x_j, x_{j+1}, \ldots, x_{j+r-1}, x_{j+r}) = 0\ \mbox{for all}\ i\in \Z\, .
\end{align}
Here, we denoted $\p_{i}S(x_j, x_{j+1}, \ldots, x_{j+r-1}, x_{j+r}) := \frac{\p}{\p x_i} S(x_j, x_{j+1}, \ldots, x_{j+r-1}, x_{j+r})$. Let us remark that (\ref{recrel}) is a well-defined recurrence relation: the only nonzero terms in the sum occur for $j=i-r, \ldots, i$. We shall refer to the function $S$ as the {\it local interaction potential} of (\ref{recrel}) and to the integer $r$ as its {\it range of interaction}. We look for solutions to (\ref{recrelintro}) in $\R^{\Z}$, the space of bi-infinite sequences $x:\Z\to\R$.

Throughout this text we impose the following conditions {\bf A}-{\bf C} on the local potential:
\begin{itemize}
\item[{\bf A.}] Periodicity: $$S(x_j+1, \ldots, x_{j+r}+1)=S(x_j, \ldots, x_{j+r})\, .$$ 
This condition implies that the maps $x\mapsto S(x_{j}, \ldots, x_{j+r})$ descend to maps on $\R^{\Z}/\Z$.
\item[{\bf B.}] Monotonicity: $S$ is twice continuously differentiable and $$\partial_{i,k}S(x_j, \ldots, x_{j+r})  \leq 0 \ \mbox{for all} \ j \ \mbox{and all} \ i\neq k\ \mbox{and} \ \partial_{j,j+1}S(x_j, \ldots, x_{j+r})  < 0 \, .$$
This condition implies that (\ref{recrel}) is monotone, in the sense that
$ \p_k R(x_{i-r}, \ldots, x_{i+r}) \leq 0$ for all $k\neq i$. The condition is
 also called a {\it twist condition} or {\it ferromagnetic condition}. 
\item[{\bf C.}] Coercivity: $x\mapsto S(x_j, \ldots, x_{j+r})$ is bounded from below and there is a $j \leq k \leq j+r-1$ for which $$\lim_{|x_{k+1}-x_k|\to \infty}S(x_j, \ldots, x_{j+r})=\infty\, .$$
\end{itemize}
Conditions {\bf A}-{\bf C} are standard in Aubry-Mather theory. They guarantee that (\ref{recrel}) supports many interesting solutions. This will be explained below.

\subsection{Global minimizers}
One can think of the solutions to (\ref{recrel}) as the stationary points of the formal action 
\begin{align}\label{formalaction}
W(x):=\sum_{j\in \Z}S(x_j, \ldots, x_{j+r})\, .
\end{align}
Indeed, $R(x_{i-r}, \ldots, x_{i+r}) $ is precisely the (formal) derivative of $W(x)$ with respect to $x_i$.

Nevertheless, the formal sum (\ref{formalaction}) will in general be divergent. Thus, in order to make this formal variational principle useful, let us make a few definitions. Firstly,  for any finite interval $[i_0, i_1] \subset \Z$ we define a finite action
$W_{[i_0,i_1]}:\R^{\Z}\to \R$ by
$$W_{[i_0, i_1]}(x):=\sum_{j \in [i_0, i_1]} S(x_j, \ldots, x_{j+r})\, .$$
It is clear that $W_{[i_0, i_1]}(x)$ is a finite sum and is a function of only $x_{i_0}, \ldots, x_{i_1+r}$. On the other hand, when $i\in[i_0+r, i_1]$, then $\p_{i}S(x_j, \ldots, x_{j+r})=0$ for all $j< i_0$ and all $j> i_1$ and therefore, if $i\in[i_0+r, i_1]$,
$$\p_{i} W_{[i_0, i_1]}(x)= \sum_{j\in[i_0, i_1]} \p_{i}S(x_j, \ldots, x_{j+r}) = \sum_{j\in \Z} \p_{i}S(x_j, \ldots, x_{j+r}) = R(x_{i-r}, \ldots, x_{i+r}) \, .$$
With this in mind, we define a special type of solutions to (\ref{recrel}) as follows:

 \begin{definition}
A sequence $x: \Z\to\R$ is called a \textit{global minimizer}, if it holds for all finite intervals $[i_0, i_1]\subset \Z$ and for all $v: \Z\to \R$ with ${\rm supp}(v) \subset [i_0+r, i_1]$ that 
$$W_{[i_0, i_1]}(x) \leq W_{[i_0, i_1]}(x+v)\, .$$
\end{definition}
\noindent It is clear from the above considerations that global minimizers are solutions to (\ref{recrel}). 

We also remark for later reference:
\begin{proposition}\label{minimizerslimit}
The collection of global minimizers is closed under pointwise convergence.  \end{proposition}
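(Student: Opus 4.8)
The plan is to exploit the fact that for any fixed finite interval $[i_0,i_1]\subset\Z$ the action $W_{[i_0,i_1]}(x)=\sum_{j\in[i_0,i_1]}S(x_j,\ldots,x_{j+r})$ is a \emph{finite} sum and hence depends only on the finitely many coordinates $x_{i_0},\ldots,x_{i_1+r}$. Since $S$ is continuous (indeed twice continuously differentiable by condition {\bf B}), the functional $x\mapsto W_{[i_0,i_1]}(x)$ is continuous with respect to pointwise convergence of sequences in $\R^{\Z}$. The proposition will then follow by passing to the limit inside the defining inequality of a global minimizer.

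Concretely, I would proceed as follows. Let $x^{(n)}$ be a sequence of global minimizers with $x^{(n)}\to x$ pointwise, and fix an arbitrary finite interval $[i_0,i_1]\subset\Z$ together with a sequence $v:\Z\to\R$ with ${\rm supp}(v)\subset[i_0+r,i_1]$. For every $n$, minimality of $x^{(n)}$ gives
$$W_{[i_0,i_1]}(x^{(n)})\leq W_{[i_0,i_1]}(x^{(n)}+v)\, .$$
Only the coordinates indexed by $i_0,\ldots,i_1+r$ enter both sides, and for each such index $k$ we have $x^{(n)}_k\to x_k$ (with $v$ fixed), so continuity of $S$ yields $W_{[i_0,i_1]}(x^{(n)})\to W_{[i_0,i_1]}(x)$ and $W_{[i_0,i_1]}(x^{(n)}+v)\to W_{[i_0,i_1]}(x+v)$. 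Letting $n\to\infty$ in the displayed inequality gives $W_{[i_0,i_1]}(x)\leq W_{[i_0,i_1]}(x+v)$. Since $[i_0,i_1]$ and $v$ were arbitrary, $x$ is a global minimizer.

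There is essentially no obstacle here; the only point worth emphasising is that the closedness is asserted for \emph{pointwise} convergence, which is precisely why the finiteness of each $W_{[i_0,i_1]}$ is essential — the argument would break down for the full formal action $W$. No use is made of periodicity ({\bf A}) or coercivity ({\bf C}), and monotonicity ({\bf B}) is needed only through the regularity of $S$.
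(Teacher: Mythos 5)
Your proof is correct and is essentially identical to the paper's: both fix a finite interval and a finitely supported variation, note that $W_{[i_0,i_1]}$ depends on only finitely many coordinates and is therefore continuous under pointwise convergence, and pass to the limit in the minimality inequality. The added remarks on why pointwise convergence suffices and which of conditions \textbf{A}--\textbf{C} are actually used are accurate but not part of the paper's argument.
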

\begin{proof} 
Let $x^1, x^2, \ldots$ be global minimizers and assume that the pointwise limit $x^{\infty} = \lim_{n\to \infty} x^n$ exists, that is $\lim_{n\to \infty} x^n_i = x^{\infty}_i$ for all $i\in \Z$. Moreover, let $v:\Z\to\R$ have finite support, say ${\rm supp}(v)\subset [i_0+r, i_1]$. Then it holds that  
$$W_{[i_0,i_1]}(x^n) \leq W_{[i_0,i_1]}(x^n+v)\ \mbox{for all}\ n=1,2,\ldots\, .$$
Using that $W_{[i_0,i_1]}$ is continuous for pointwise convergence and taking the limit for $n\to\infty$, it then follows that also 
$$W_{[i_0,i_1]}(x^{\infty}) \leq W_{[i_0,i_1]}(x^{\infty}+v)\, .$$ 
Hence also $x^{\infty}$ is a global minimizer.
\end{proof}
\subsection{The Birkhoff property}
It turns out that many global minimizers have the so-called {\it Birkhoff property}. To define this property, let us introduce the translates of a sequence as follows:

\begin{definition}
For $k,l \in \Z$ and $x\in \R^{\Z}$, we define the translate $\tau_{k,l}x\in \R^{\Z}$ by
\begin{align} \label{translations}
(\tau_{k,l}x)_i := x_{i-k}+l\, .
\end{align}
\end{definition}
\noindent Note that condition {\bf A} ensures that if $x$ is a global minimizer of (\ref{recrel}), then also $\tau_{k,l}x$ is a global minimizer. In other words: the collection of global minimizers is translation-invariant. 

We also introduce a partial ordering on $\R^\Z$:

\begin{definition} We write 
\begin{itemize} 
	\item $x\leq y$ if $x_i\leq y_i$ for all $i\in \Z$ (ordering),
	\item $x<y$ if  $x_i\leq y_i$ for all $i\in \Z$ but $x\neq y$ (weak ordering),
	\item $x\ll y$ if $x_i<y_i$ for all $i\in \Z$ (strict ordering).
\end{itemize} 
\end{definition}
\noindent Now we can define Birkhoff sequences: 
\begin{definition}\label{birkhoff}
We call a sequence $x\in \R^\Z$ {\it Birkhoff} if the collection $$\{\tau_{k,l}x\, |\, k,l\in \Z\} \ \text{is ordered. }  $$
We denote the set of Birkhoff sequences by $\mathcal{B}$.
\end{definition} 
\noindent Thus, a Birkhoff sequence is a sequence that does not ``cross'' any of its integer translates. A nice and complete overview of the properties of Birkhoff sequences can be found in \cite{gole01}. A noteworthy one is that they have a rotation number:
\begin{proposition}\label{proprotnr}         
If $x\in \R^{\Z}$ is Birkhoff, then there is a unique $\omega=\omega(x)\in \R$ such that 
\begin{align}\label{birkhoffest}
|x_i-(x_0+ \omega \cdot i) |\leq 1\, .
\end{align}
In particular, $\lim_{n\to \pm \infty}\frac{x_n}{n} = \omega(x)$. Moreover, the {\rm rotation number} $\omega(x)$ depends continuously on the sequence: when $x^1, x^2, \ldots, x^{\infty}$ are Birkhoff sequences of rotation numbers $\omega_1, \omega_2, \ldots, \omega_{\infty}$ and $\lim_{n\to \infty} x^n = x^{\infty}$ pointwise, then $\lim_{n\to \infty} \omega_n = \omega_{\infty}$.
\end{proposition}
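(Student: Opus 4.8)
\medskip
\noindent\emph{Proof proposal.} The plan is to extract the rotation number as a common growth rate of the increments $x_{i+n}-x_i$ and to use the Birkhoff property to bound their oscillation. I would begin by setting, for $n\in\Z$,
$$M(n):=\sup_{i\in\Z}(x_{i+n}-x_i)\in(-\infty,+\infty]\, ,\qquad m(n):=\inf_{i\in\Z}(x_{i+n}-x_i)\in[-\infty,+\infty)\, ,$$
and observing that the identity $x_{i+n+n'}-x_i=(x_{i+n+n'}-x_{i+n'})+(x_{i+n'}-x_i)$ makes $n\mapsto M(n)$ subadditive and $n\mapsto m(n)$ superadditive on the positive integers.

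The crucial step, and the only place the Birkhoff hypothesis enters, is the claim that $m(n)$ and $M(n)$ are finite with $M(n)-m(n)\le1$. Fix $n$ and an integer $l$, and compare $x$ with the translate $\tau_{-n,-l}x$, whose $i$-th entry is $x_{i+n}-l$; by Definition \ref{birkhoff} these two sequences are ordered, so either $x_{i+n}-x_i\ge l$ for all $i$, or $x_{i+n}-x_i\le l$ for all $i$ --- that is, either $m(n)\ge l$ or $M(n)\le l$. Consequently the open interval $(m(n),M(n))$ contains no integer. Since $x_n-x_0\in[m(n),M(n)]$, were either endpoint infinite this interval would contain integers; hence $m(n),M(n)$ are finite. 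And an interval of length exceeding $1$ always contains an integer, so $M(n)-m(n)\le1$.

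I expect this dichotomy argument to be the only subtle point --- in particular the mild care needed because the supremum and infimum need not be attained --- with everything else routine. By Fekete's lemma the limits $\omega^{+}:=\lim_{n\to\infty}M(n)/n=\inf_{n>0}M(n)/n$ and $\omega^{-}:=\lim_{n\to\infty}m(n)/n=\sup_{n>0}m(n)/n$ exist and are finite; dividing $M(n)-m(n)\le1$ by $n$ and letting $n\to\infty$ forces $\omega^{+}=\omega^{-}=:\omega(x)$. Now $n\,\omega(x)$ and $x_{i+n}-x_i$ both lie in $[m(n),M(n)]$, a set of diameter at most $1$, so $|x_{i+n}-x_i-n\,\omega(x)|\le1$ for all $i$ and all $n>0$; taking $i=0$ and running the same argument for $n<0$ yields $|x_i-(x_0+\omega(x)\,i)|\le1$ for all $i\in\Z$. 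Uniqueness is immediate, for two numbers $\omega,\omega'$ with this property satisfy $|\omega-\omega'|\,|i|\le2$ for every $i$; and dividing the estimate by $n$ and letting $n\to\pm\infty$ gives $\lim_{n\to\pm\infty}x_n/n=\omega(x)$.

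Finally, for the continuity assertion, the estimate just obtained shows that for each fixed $i\neq0$ the number $\omega(x)$ lies within $1/|i|$ of $(x_i-x_0)/i$. If $x^k\to x^{\infty}$ pointwise with all sequences Birkhoff, of rotation numbers $\omega_k$ and $\omega_{\infty}$, then $(x^k_i-x^k_0)/i\to(x^{\infty}_i-x^{\infty}_0)/i$ for each fixed $i$, so $\limsup_k\omega_k$ and $\liminf_k\omega_k$ each lie within $2/|i|$ of $\omega_{\infty}$; since $i$ may be taken arbitrarily large, $\omega_k\to\omega_{\infty}$.
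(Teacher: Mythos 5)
Your proof is correct. The paper itself does not include a proof of this proposition but defers to Lemma~3.5 of an earlier paper by the same authors, so a direct comparison is not possible from the source at hand; what you give is the classical Poincar\'e/Fekete-style argument (subadditivity of $M(n)=\sup_i(x_{i+n}-x_i)$, superadditivity of $m(n)=\inf_i(x_{i+n}-x_i)$, and the key observation that comparability of $x$ with $\tau_{-n,-l}x$ forces the open interval $(m(n),M(n))$ to be integer-free, hence of length at most one). All the minor points you flag as delicate are handled correctly: the finiteness of $m(n),M(n)$ follows because $x_n-x_0$ sits in an interval that would otherwise contain integers, $\omega^{\pm}$ are finite since $m(1)\le m(n)/n\le M(n)/n\le M(1)$, the containment $n\omega(x)\in[m(n),M(n)]$ is exactly the Fekete characterization $\sup_n m(n)/n=\omega=\inf_n M(n)/n$, and the continuity claim follows cleanly from the uniform bound $|\omega(x)-(x_i-x_0)/i|\le 1/|i|$. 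This is a standard and complete argument for establishing rotation numbers of Birkhoff sequences.
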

\noindent See \cite{MramorRink1} (Lemma 3.5) for a proof of this proposition. We write $\mathcal{B}_{\omega}:=\{ x\in \mathcal{B}\, |\, \omega(x)=\omega\, \}$. 
It is clear that $\mathcal{B}_{\omega}$ is closed in the topology of pointwise convergence and as an important corollary of Proposition \ref{proprotnr} we also mention:
\begin{corollary}
Let $L>0$. The collection $\bigcup_{|\omega|\leq L} \mathcal{B}_{\omega}/\Z$ is compact in the topology of pointwise convergence.
\end{corollary}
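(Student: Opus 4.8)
The plan is to lift the statement to $\R^\Z$: I would exhibit an explicit compact set of representatives and project it down. Since the $\Z$-action on $\R^\Z$ is $l\mapsto\tau_{0,l}$, i.e. addition of the constant sequence $l$, and adding a constant changes neither the Birkhoff property nor the rotation number, every class in $\bigcup_{|\omega|\le L}\mathcal{B}_\omega/\Z$ has a representative $x$ with $x_0\in[0,1)$. So I would set
$$K:=\{\,x\in\mathcal{B}\ :\ |\omega(x)|\le L,\ x_0\in[0,1]\,\}\subset\R^\Z ,$$
and observe that the canonical projection $\pi:\R^\Z\to\R^\Z/\Z$ maps $K$ onto $\bigcup_{|\omega|\le L}\mathcal{B}_\omega/\Z$. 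Because $\pi$ is continuous for the quotient topology and continuous images of compact sets are compact, it then suffices to prove that $K$ is compact in the topology of pointwise convergence.

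Compactness of $K$ follows from two observations. First, $K$ is bounded coordinatewise: for $x\in K$ the Birkhoff estimate (\ref{birkhoffest}) gives $|x_i|\le|x_0|+|\omega(x)|\,|i|+1\le L|i|+2$, so $K\subset\prod_{i\in\Z}[-(L|i|+2),\,L|i|+2]$, and the latter set is compact by Tychonoff's theorem (the product topology on $\R^\Z$ is even metrizable, so one may argue by sequences instead). Second, $K$ is closed in $\R^\Z$: the condition $x_0\in[0,1]$ is manifestly closed; $\mathcal{B}$ is closed under pointwise limits, since if $x^n\to x^\infty$ pointwise with each $x^n\in\mathcal{B}$, then for any two translates $\tau_{k,l}$, $\tau_{k',l'}$ one of the comparisons $\tau_{k,l}x^n\le\tau_{k',l'}x^n$, $\tau_{k',l'}x^n\le\tau_{k,l}x^n$ holds for infinitely many $n$ and non-strict inequalities pass to the limit, so $\{\tau_{k,l}x^\infty\}$ is ordered and $x^\infty\in\mathcal{B}$; and finally the rotation number $\omega$ is continuous on $\mathcal{B}$ by Proposition \ref{proprotnr}, so $\{x\in\mathcal{B}:|\omega(x)|\le L\}$ is closed in $\mathcal{B}$. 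Intersecting these three closed sets exhibits $K$ as a closed subset of a compact set, hence $K$ is compact, and the corollary follows.

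The only step with any real content is the stability of the bound $|\omega(x)|\le L$ under pointwise limits, and this is handed to us directly by the continuity of the rotation number in Proposition \ref{proprotnr}; everything else is bookkeeping with Tychonoff's theorem and closed subsets. I therefore do not expect a genuine obstacle here — the role of the corollary is simply to record, for later use, that the relevant space of Birkhoff sequences modulo integer translation is compact in the pointwise topology.
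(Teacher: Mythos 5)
Your proof is correct and uses the same ingredients as the paper's: the Birkhoff estimate to bound coordinates, Tychonoff's theorem for compactness of the product, closedness of $\mathcal{B}$ under pointwise limits, and continuity of the rotation number (Proposition \ref{proprotnr}). The only cosmetic difference is that you phrase the argument as ``closed subset of a compact set'' while the paper extracts a convergent subsequence directly; in this metrizable setting the two are interchangeable.
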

\begin{proof}
Let $x^1, x^2, \ldots \in \bigcup_{|\omega| \leq L} \mathcal{B}_{\omega}$. Then it follows from (\ref{birkhoffest}) that $|x^n_{i+1}-x^n_i|\leq K$ for $K:=L+2$. By equivalence, we may moreover assume that $x^{n}_0\in [0,1]$ and  therefore $x^{n}_i \in \left[-K|i|,  1+K |i| \right]$. Hence, by Tychonov's theorem there is a subsequence $x^{n_j}$ that limits pointwise to a sequence $x^{\infty}\in \R^\Z$ with $x^{\infty}_0\in [0,1]$. It is clear $x^{\infty}$ is Birkhoff, because $\mathcal{B}$ is closed, and it follows from Proposition \ref{proprotnr} that $\omega(x^{\infty}) = \lim_{j\to\infty} \omega(x^{n_j})\in [-L,L]$. This proves the corollary.
\end{proof}
\noindent Finally, the following technical result is well-known and will be used a few times in this paper. We refer to \cite{MramorRink1} (Proposition 3.8) for a proof. 
\begin{proposition}\label{numbertheory}
Let $x$ be Birkhoff of rotation number $\omega$. Then 
$$ \tau_{k,l}x>x\ \mbox{if}\ -k\omega + l >0\  \mbox{and} \ \tau_{k,l}x<x \ \mbox{if} \ -k\omega + l <0 \, .$$
\end{proposition}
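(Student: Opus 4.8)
The plan is to exploit two facts: first, since $x$ is Birkhoff, the sequences $x$ and $\tau_{k,l}x$ are automatically comparable in the ordering $\leq$; second, the rotation number controls the asymptotic drift of the \emph{iterated} translates $\tau_{k,l}^n x$, and this drift pins down in which direction the comparison between $x$ and $\tau_{k,l}x$ must go.

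First I would record two elementary properties of the maps $\tau_{k,l}$. They are order preserving: if $y\leq z$ then $\tau_{k,l}y\leq\tau_{k,l}z$, because $(\tau_{k,l}y)_i = y_{i-k}+l\leq z_{i-k}+l=(\tau_{k,l}z)_i$. And they compose additively, $\tau_{k,l}\circ\tau_{k,l}=\tau_{2k,2l}$, so that $\tau_{k,l}^n=\tau_{nk,nl}$ for every $n\geq 1$. Next I would estimate the drift $(\tau_{nk,nl}x)_i - x_i = x_{i-nk}+nl-x_i$: applying the Birkhoff bound (\ref{birkhoffest}) to $x_{i-nk}$ and to $x_i$ and subtracting gives $|x_{i-nk}-x_i+nk\omega|\leq 2$, hence
$$ n(-k\omega+l)-2\ \leq\ (\tau_{nk,nl}x)_i - x_i\ \leq\ n(-k\omega+l)+2 \qquad\mbox{for all}\ i\in\Z. $$
This estimate is uniform in $i$, since (\ref{birkhoffest}) holds with a constant independent of $i$.

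For the first case, suppose $-k\omega+l>0$. I would choose $n$ so large that $n(-k\omega+l)>2$; the displayed estimate then gives $\tau_{nk,nl}x\gg x$. Since $x$ is Birkhoff, $x$ and $\tau_{k,l}x$ are comparable, so I would argue by contradiction: if $\tau_{k,l}x\leq x$, applying the order-preserving map $\tau_{k,l}$ repeatedly gives $\tau_{nk,nl}x\leq x$, contradicting $\tau_{nk,nl}x\gg x$; and if $\tau_{k,l}x=x$ then $\tau_{nk,nl}x=x$, the same contradiction. Hence $\tau_{k,l}x\geq x$ with $\tau_{k,l}x\neq x$, i.e. $\tau_{k,l}x>x$. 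The case $-k\omega+l<0$ would be handled symmetrically, picking $n$ large so that $\tau_{nk,nl}x\ll x$ and ruling out $\tau_{k,l}x\geq x$ in the same way.

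I do not anticipate a serious obstacle here; the argument is short. The only point deserving care is conceptual rather than computational: the Birkhoff property by itself yields merely comparability of $x$ and $\tau_{k,l}x$, not the sign of the difference; the sign must be read off from the asymptotic drift of the iterates $\tau_{nk,nl}x$ and then transferred back to the single translate $\tau_{k,l}$ using that $\tau_{k,l}$ preserves the ordering. (Note that if $\omega$ is rational it can happen that $-k\omega+l=0$ for some $(k,l)\neq(0,0)$, but then the proposition makes no assertion, so this degenerate case needs no discussion.)
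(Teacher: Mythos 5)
Your proof is correct and complete. Since the paper itself does not reproduce the argument (it refers to \cite{MramorRink1}, Proposition 3.8), there is no in-text proof to compare against, but your approach is the standard one: record that $\tau_{k,l}$ is order-preserving and satisfies $\tau_{k,l}^n=\tau_{nk,nl}$, use the Birkhoff estimate $|x_i-(x_0+\omega i)|\leq 1$ to bound the drift of the iterated translates by $n(-k\omega+l)\pm 2$, take $n$ large to force $\tau_{nk,nl}x\gg x$ (resp. $\ll x$), and transfer the strict sign back to a single application of $\tau_{k,l}$ via comparability (from the Birkhoff property) together with order-preservation. All steps check out; in particular the iteration $\tau_{k,l}x\leq x\Rightarrow\tau_{nk,nl}x\leq x$ is valid because each application of $\tau_{k,l}$ preserves $\leq$, and strictness of the conclusion follows because $\tau_{k,l}x=x$ would force $\tau_{nk,nl}x=x$, which the drift estimate excludes.
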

 \subsection{Periodic minimizers}\label{per section}
For $p,q\in \Z$, let us define the collection of $(p,q)$-periodic sequences as 
$$\X_{p,q}:=\{x\in \R^\Z \ | \ \tau_{p,q}x=x\}\, .$$
Elements of $\X_{p,q}$ have rotation number $q/p$ (also if they are not Birkhoff).

An important remark is that a $(p,q)$-periodic sequence $x\in \X_{p,q}$ is a solution to (\ref{recrel}) if and only if it is a stationary point of the periodic action function
$$W_{p,q}:\X_{p,q}\to\R\ \mbox{defined by}\ W_{p,q}(x):= W_{[0,p-1]}(x) = \sum_{j=0}^{p-1} S(x_j, \ldots, x_{j+r})\, .$$
Because $\X_{p,q}$ is finite-dimensional and $W_{p,q}(x)$ is a finite sum, these stationary points are well-defined and in particular we call $x\in \X_{p,q}$ a {\it periodic minimizer} or {\it $(p,q)$-minimizer} if  
$$W_{p,q}(x) \leq W_{p,q}(y) \ \mbox{for all}\ y\in \X_{p,q}\, .$$ 
\noindent Condition {\bf C} guarantees that periodic minimizers of all periods exist, see also \cite{MramorRink1} (Theorem 4.3). The following proposition summarizes what we need to know about periodic minimizers. For a full proof of this proposition, we refer to \cite{gole01} or \cite{MramorRink1} (Section 4.1). 
\begin{proposition}
\label{periodicminimizersproposition}
For all $p,q\in \Z$ with $p\neq 0$, the collection 
$$\mathcal{M}_{p,q} := \{x\in \X_{p,q}\, |\, x \ \mbox{is a}\ (p,q)\mbox{-minimizer}\, \}$$
is nonempty, closed under pointwise convergence, translation-invariant and strictly ordered. In particular, every element of $\mathcal{M}_{p,q}$ is Birkhoff. Moreover, $x\in \X_{p,q}$ is a $(p,q)$-minimizer if and only if it is an $(np, nq)$-minimizer (for any $n\in \N$), if and only if it is a global minimizer.
\end{proposition}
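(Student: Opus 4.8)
The plan is to reduce the statement to two classical ingredients: a variational ``submodularity'' inequality for $W_{p,q}$ that follows from the monotonicity condition \textbf{B}, and a strong comparison principle for solutions of (\ref{recrel}). I first dispose of the soft properties. For \emph{nonemptiness}, identify $\X_{p,q}$ with $\R^p$ via $x\mapsto(x_0,\dots,x_{p-1})$; then $W_{p,q}$ is a finite sum of continuous functions, and by condition \textbf{A} it is invariant under $x\mapsto x+1$, so a minimizing sequence may be normalized to satisfy $x_0\in[0,1]$, forcing $x_p=x_0+q$ to remain bounded. If some consecutive difference $|x_{i+1}-x_i|$ were unbounded along this sequence, then — since the ``weak spot'' of condition \textbf{C} is propagated by periodicity through a full period of positions and all other blocks are bounded below — $W_{p,q}$ would tend to $+\infty$; hence the minimizing sequence is bounded and a pointwise-convergent subsequence yields a minimizer. \emph{Pointwise closedness} is immediate: the minimal value $m_{p,q}:=\min_{\X_{p,q}}W_{p,q}$ is a constant, $\X_{p,q}$ is closed under pointwise limits, and $W_{p,q}$ is pointwise-continuous. \emph{Translation-invariance}: $W_{p,q}(\tau_{0,1}x)=W_{p,q}(x)$ by \textbf{A}, and $W_{p,q}(\tau_{1,0}x)=W_{p,q}(x)$ because for $x\in\X_{p,q}$ the block $S(x_{-1},\dots,x_{r-1})$ coincides with $S(x_{p-1},\dots,x_{p+r-1})$ (shift all arguments down by $p$, i.e.\ by the integer $q$, and use \textbf{A}); so $\tau_{k,l}$ preserves $\X_{p,q}$, the value of $W_{p,q}$, and hence $\M_{p,q}$.

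Now the two lemmas. \emph{(i) Submodularity}: $W_{p,q}(x\wedge y)+W_{p,q}(x\vee y)\le W_{p,q}(x)+W_{p,q}(y)$ for all $x,y\in\X_{p,q}$, where $\wedge,\vee$ are the pointwise minimum and maximum. This is proved blockwise, since the $j$-th block of $x\wedge y$ is the componentwise minimum of the $j$-th blocks of $x$ and $y$, together with the elementary fact that a $C^2$ function on $\R^{r+1}$ with $\partial_{i,k}S\le0$ for all $i\neq k$ is submodular (a one-line computation with the fundamental theorem of calculus). \emph{(ii) Strong comparison}: if $x\le y$ are both solutions of (\ref{recrel}) and $x_{i_0}=y_{i_0}$ for some $i_0$, then $x=y$. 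Indeed, starting from $0=R(y_{i_0-r},\dots,y_{i_0+r})$ and lowering the coordinates at the positions $\neq i_0$ one at a time down to the corresponding $x$-values keeps $R\ge0$ (because $\partial_kR\le0$ for $k\neq i_0$) and terminates at $0=R(x_{i_0-r},\dots,x_{i_0+r})$; hence every intermediate value is $0$, and since the strict twist part of \textbf{B} gives $\partial_{i_0-1}R<0$ and $\partial_{i_0+1}R<0$, we must have $x_{i_0-1}=y_{i_0-1}$ and $x_{i_0+1}=y_{i_0+1}$; induction then gives $x=y$.

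These two lemmas yield the remaining structural claims. For \emph{strict ordering}: given $x,y\in\M_{p,q}$, submodularity and minimality force $W_{p,q}(x\wedge y)=W_{p,q}(x\vee y)=m_{p,q}$, so $x\wedge y$ and $x\vee y$ are $(p,q)$-minimizers and in particular solutions of (\ref{recrel}); since $x\wedge y\le x$ and they agree at every index where $x_i\le y_i$, the comparison principle forces $x\le y$ or $y\le x$, and if additionally $x_i=y_i$ for some $i$ it forces $x=y$. Every $x\in\M_{p,q}$ is therefore \emph{Birkhoff}: by translation-invariance all of its integer translates lie in the strictly ordered set $\M_{p,q}$, so $\{\tau_{k,l}x\mid k,l\in\Z\}$ is ordered.

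It remains to establish the three equivalent characterizations for $x\in\X_{p,q}$. If $y\in\M_{np,nq}$, then $\tau_{p,q}y\in\M_{np,nq}$ as well, so $y$ and $\tau_{p,q}y$ are comparable by strict ordering, and iterating gives $y\le\tau_{p,q}y\le\dots\le\tau_{np,nq}y=y$ (or the reversed chain), whence $\tau_{p,q}y=y$, i.e.\ $y\in\X_{p,q}$. Combined with $W_{np,nq}|_{\X_{p,q}}=n\,W_{p,q}$ and $\X_{p,q}\subset\X_{np,nq}$, this yields $m_{np,nq}=n\,m_{p,q}$ and that the $(np,nq)$-minimizers are exactly the $(p,q)$-minimizers. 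Finally, a $(p,q)$-minimizer $x$ is an $(np,nq)$-minimizer for every $n$, so given a variation $v$ with ${\rm supp}(v)\subset[i_0+r,i_1]$ we translate so that $i_0\ge0$ and choose $n$ with $np-1\ge i_1$; then the $(np,nq)$-periodic sequence $y$ that agrees with $x+v$ on $[0,np-1]$ in fact agrees with $x+v$ on the entire window $[0,np+r-1]$ seen by $W_{[0,np-1]}$ (because $v$ vanishes within distance $r$ of both endpoints), giving $W_{[i_0,i_1]}(x+v)-W_{[i_0,i_1]}(x)=W_{np,nq}(y)-W_{np,nq}(x)\ge0$. Conversely, a $(p,q)$-periodic global minimizer beats any periodic competitor by a cut-and-paste argument on a long window, interpolating over one period at each end and letting the window length tend to infinity. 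I expect the strict-ordering step to be the main obstacle: one must set up the strong comparison principle with exactly the right strict inequalities $\partial_{i_0\pm1}R<0$ — which is precisely where the strict twist condition $\partial_{j,j+1}S<0$ of \textbf{B} enters — and, when $r>1$, propagate ``touching at a single point'' through the two nearest neighbours before the full rigidity $x=y$ can be deduced.
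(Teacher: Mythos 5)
Your proof is correct, and the route you take --- the submodularity (minimum--maximum) inequality from condition \textbf{B} plus a strong comparison principle for solutions of the recurrence relation --- is exactly the standard framework that the paper defers to (it cites \cite{gole01} and \cite{MramorRink1} rather than proving the proposition itself), and it is the same min--max tool the paper later invokes explicitly in the proof of Lemma~\ref{variant AL}. Each step checks out, including the two places that often trip people up: the pigeonhole use of coercivity (the single ``weak spot'' index $k_0$ of condition \textbf{C} sweeps through all residues mod~$p$ as the block index varies, so an unbounded consecutive difference forces $W_{p,q}\to\infty$), and the propagation of a touching point via the strict off-diagonal derivative $\partial_{i_0\pm1}R<0$, which is precisely where the strict part $\partial_{j,j+1}S<0$ of condition \textbf{B} is needed.
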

 \noindent We shall denote by $\mathcal{B}_{p,q}:= \mathcal{B} \cap \X_{p,q} \subset \mathcal{B}_{q/p}$ the set of $(p,q)$-periodic Birkhoff sequences. Proposition \ref{periodicminimizersproposition} shows that $\mathcal{M}_{p,q}\subset \mathcal{B}_{p,q}$ and that $\mathcal{M}_{p,q} = \mathcal{M}_{np,nq}$. In fact, it even holds that $\B_{np,nq}=\B_{p,q}$, see \cite{MramorRink1} (Theorem 3.12).
 
The part of Proposition \ref{periodicminimizersproposition} that says that every periodic minimizer has the Birkhoff property is also known as {\it Aubry's lemma} or the {\it non-crossing lemma}. Proofs of Aubry's lemma can also be found in \cite{AubryDaeron}, \cite{llave-lattices} and \cite{MatherForni}.  

\subsection{Quasi-periodic minimizers}\label{qper section}

Nonperiodic global minimizers need not have the Birkhoff property, see \cite{dichotomy}. Nevertheless, Birkhoff global minimizers of irrational rotation numbers exist and they can be constructed as limits of periodic minimizers. This is the content of Theorem \ref{existenceqp} below. In the context of twist maps, this theorem 
  is originally due to Aubry and le Daeron \cite{AubryDaeron} and Mather \cite{MatherTopology}. Generalizations to finite range variational recurrence relations have been made by Angenent \cite{angenent90} and in the context of certain lattice problems, Theorem \ref{existenceqp} has been formulated for the first time by Koch, de la Llave and Radin in \cite{llave-lattices}.  We sketch the proof.

\begin{theorem}\label{existenceqp}
For every $\omega\in \R\backslash \Q$ the collection $$\mathcal{M}_{\omega}:= \{ x\in \mathcal{B}_{\omega}\, |\, x \ \mbox{is a global minimizer of (\ref{recrel})}\, \}$$ is nonempty, closed under pointwise convergence, translation-invariant and strictly ordered.
\end{theorem}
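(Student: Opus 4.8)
The plan is to realise $\mathcal{M}_\omega$ as a limit of periodic minimizers and then to check the four asserted properties with the machinery collected above. For \emph{nonemptiness}, pick rationals $q_n/p_n\to\omega$ with $p_n\neq 0$; since $\omega$ is finite we may assume $|q_n/p_n|\leq L:=|\omega|+1$ for all $n$. By Proposition \ref{periodicminimizersproposition} there is a $(p_n,q_n)$-minimizer $x^n$, which is a global minimizer, is Birkhoff, and has rotation number $q_n/p_n$; after a vertical integer translation we may also assume $x^n_0\in[0,1)$. The Tychonov argument used to prove the compactness corollary above then produces a subsequence $x^{n_j}$ converging pointwise to some $x^\infty$, which is Birkhoff because $\mathcal{B}$ is closed under pointwise convergence. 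By Proposition \ref{proprotnr} we get $\omega(x^\infty)=\lim_j q_{n_j}/p_{n_j}=\omega$, and by Proposition \ref{minimizerslimit} $x^\infty$ is a global minimizer, so $x^\infty\in\mathcal{M}_\omega$.

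That $\mathcal{M}_\omega$ is \emph{closed under pointwise convergence} and \emph{translation-invariant} is then immediate. If $x^n\in\mathcal{M}_\omega$ and $x^n\to x$ pointwise, then $x\in\mathcal{B}$, $\omega(x)=\omega$ by Proposition \ref{proprotnr}, and $x$ is a global minimizer by Proposition \ref{minimizerslimit}, so $x\in\mathcal{M}_\omega$. If $x\in\mathcal{M}_\omega$ then $\tau_{k,l}x$ is a global minimizer by condition {\bf A}, it is Birkhoff because $\{\tau_{m,n}(\tau_{k,l}x):m,n\in\Z\}=\{\tau_{m,n}x:m,n\in\Z\}$, and $\omega(\tau_{k,l}x)=\omega(x)$ straight from (\ref{translations}); hence $\tau_{k,l}x\in\mathcal{M}_\omega$.

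The genuinely delicate property, and the one I expect to be the main obstacle, is that $\mathcal{M}_\omega$ is \emph{strictly ordered}. I would base this on two ingredients, both consequences of the twist condition {\bf B}. The first is a comparison (strong maximum) principle for (\ref{recrel}): if $x\leq y$ solve (\ref{recrel}) and $x_{i^*}=y_{i^*}$ for some $i^*$, then $x=y$. To see it, start from $0=R(y_{i^*-r},\ldots,y_{i^*+r})$ and lower the arguments in the slots $\neq i^*$ one at a time to the corresponding $x$-values; since $R$ is nonincreasing in each such slot the value of $R$ never decreases, and since it ends at $R(x_{i^*-r},\ldots,x_{i^*+r})=0$ every step leaves $R$ unchanged; because $R$ is \emph{strictly} decreasing in the slots $i^*\pm1$ (the relevant mixed second derivative of $S$ being some $\partial_{a,a+1}S<0$) this forces $x_{i^*\pm1}=y_{i^*\pm1}$, and iterating in both directions gives $x=y$. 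The second ingredient is the surgery (lattice) property of global minimizers: if $x,y$ are global minimizers then so are $\min(x,y)$ and $\max(x,y)$, the classical consequence of the submodularity of $S$ implied by {\bf B} (Angenent \cite{angenent90}; see \cite{MramorRink1,llave-lattices} for the present setting). This surgery lemma is the one step that is not a short formal argument, so I regard its proof as the real work behind the theorem.

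Granting these two ingredients, strict ordering follows quickly. Suppose $x,y\in\mathcal{M}_\omega$ were not comparable, so $x_a>y_a$ and $x_b<y_b$ for some $a,b$. Then $z:=\min(x,y)$ is a global minimizer with $z\leq x$ and $z\neq x$ (since $z_a=y_a<x_a$), so the comparison principle forces $z\ll x$, contradicting $z_b=x_b$. Hence any two elements of $\mathcal{M}_\omega$ are comparable, and when $x\leq y$ with $x\neq y$ the comparison principle upgrades this to $x\ll y$. (Alternatively one can follow Bangert \cite{bangert87}: since $\omega\notin\Q$, Proposition \ref{numbertheory} lets one slide integer translates of $x$, whose parameters $l-k\omega$ are dense in $\R$, downward until in the limit they touch $y$, after which the comparison principle applies.)
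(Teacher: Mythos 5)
Your handling of nonemptiness, closedness under pointwise convergence, and translation-invariance is correct and is exactly the route the paper takes (compactness of Birkhoff sequences of bounded rotation number, Proposition \ref{proprotnr}, Proposition \ref{minimizerslimit}, and equivariance under \textbf{A}). The paper likewise gives only a sketch and explicitly declines to prove strict ordering, referring to Bangert's theorem. Your comparison (strong maximum) principle is also correctly stated and correctly proved from \textbf{B}.

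The gap is in the strict-ordering argument, and it is not merely an omitted technical step: the ``surgery lemma'' you rely on is not a classical consequence of submodularity in the form you need. Submodularity gives, for two \emph{periodic} minimizers, that $\min$ and $\max$ are again periodic minimizers, because there the full action $W_{p,q}$ is a single finite functional on the lattice $\X_{p,q}$. For \emph{global} minimizers, minimality is defined through compactly supported comparisons, and if $x$ and $y$ cross, then $\min(x,y)$ and $\max(x,y)$ differ from $x$ and from $y$ on \emph{unbounded} sets, so the usual min--max bookkeeping does not close up. Worse, the claim is actually false in general: for a rational rotation number $q/p$, two heteroclinic Birkhoff global minimizers between neighbouring periodic minimizers can cross, yet the resulting $\min$ has a corner and fails to be a global minimizer (this is precisely what Aubry's non-crossing argument exploits). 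Your main argument never uses that $\omega$ is irrational, so if the surgery lemma held as stated, the same proof would give strict ordering of $\mathcal{M}_{q/p}$, which is not true. Thus the reduction ``surgery lemma $\Rightarrow$ strict ordering'' is logically fine, but its hypothesis is exactly as hard as, and in fact essentially equivalent to, the non-crossing statement you want — making the argument circular rather than a genuine reduction.

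Your parenthetical alternative — sliding translates $\tau_{k,l}x$, whose offsets $l-k\omega$ are dense because $\omega$ is irrational, until one touches $y$, then invoking the comparison principle — is much closer to Bangert's actual proof and is where the irrationality genuinely enters. But that sketch is also incomplete: the delicate points are that the limit of the sliding family is again a global minimizer, that ``touching'' happens in a finite index, and that the translate touches from one side without already crossing; these require the a~priori Birkhoff estimates and a compactness/continuity argument that is the real content of \cite{bangert87}. In short: for strict ordering you should either cite Bangert as the paper does, or develop the sliding-translate argument in full, rather than rest the proof on a lattice property that does not hold for global minimizers in general.
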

\begin{proof}\ [{\bf Sketch}]  
Given $\omega\in\R \backslash \Q$, choose a sequence $q_n/p_n\in \Q$ with $\lim_{n\to\infty} q_n/p_n=\omega$. Let $x^{n}\in \mathcal{M}_{p_n, q_n}$ be a corresponding sequence of periodic minimizers. We have seen that these exist and have rotation number $q_n/p_n$. Each of them is a Birkhoff global minimizer and by translation-invariance it can be assumed that $x^{n}_0\in [0,1]$. Because there is a constant $L>0$ so that $|q_n/p_n| \leq  L$, there exists by compactness a subsequence $x^{n_j}$ that converges to a Birkhoff global minimizer $x^{\infty}$. Proposition \ref{proprotnr} guarantees that $x^{\infty}$ has rotation number $\omega$. This proves that $\mathcal{M}_{\omega}$ is nonempty. 

Proposition \ref{minimizerslimit} implies that $\mathcal{M}_{\omega}$ is closed under pointwise convergence and it is obvious that it is translation-invariant. The proof that $\mathcal{M}_{\omega}$ is strictly ordered if $\omega\in\R\backslash \Q$ is due to Bangert \cite{bangert87}. He originally gave this proof in the context of the study of minimal solutions of variational elliptic PDEs on the torus, as studied also by Moser in \cite{moser86, moser88, moserbrasil, moser89}. His proof is nontrivial and we do not provide it here. 
\end{proof}

\noindent One defines the {\it Aubry-Mather set} $\mathcal{M}_{\omega}^{\rm rec}\subset \mathcal{M}_{\omega}$ of rotation number $\omega\in \R\backslash \Q$ to be the recurrent subset of $\mathcal{M}_{\omega}$. This means that $\mathcal{M}_{\omega}^{\rm rec}$ is the unique smallest nonempty, closed and translation-invariant subset of $\mathcal{M}_{\omega}$. It was shown by Bangert \cite{bangert87} that $\mathcal{M}_{\omega}^{\rm rec}$ is well-defined. 

	

It is well-known, see for example \cite{MramorRink1} (Theorem 4.18) that $\mathcal{M}^{\rm rec}_{\omega}$ is either topologically connected or a Cantor set. In case that $\M^{\rm rec}_\omega$ is topologically connected, then it obviously holds that $\M^{\rm rec}_\omega=\M_\omega$. In particular, $\M_\omega$ is then a foliation. If $\M^{\rm rec}_\omega$ is a Cantor set, then $\M_\omega$ could still be a minimal foliation: this happens when the gaps in $\mathcal{M}_{\omega}^{\rm rec}$ are filled by minimizers. If, however, there is a gap in $\M^{\rm rec}_\omega$ that is not filled by minimizers, then $\M_\omega$ is a lamination.

\subsection{A Lipschitz estimate}
We finish this introductory section by providing a Lipschitz estimate that we will use extensively when we compare the actions of two distinct sequences.
\begin{proposition}\label{lipschitz}
Let $K>0$ be a constant and define 
$$\X_{K}:= \{ x: \Z\to \R\, |\, |x_{i+1}-x_i|\leq K\ \mbox{for all}\ i\in\Z\, \} \subset \R^{\Z}\, .$$ 
There exists a constant $D>0$ so that for all $x,y\in \X_{K}$ and for any $i_0\leq i_1$, 
\begin{align}\label{energyestimate}
|W_{[i_0,i_1]}(x)-W_{[i_0,i_1]}(y)|\leq D \!\!\! \sum_{i\in [i_0, i_1+r]}|x_i-y_i|\, .
\end{align}
\end{proposition}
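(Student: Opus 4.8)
The plan is to exploit the periodicity condition {\bf A} to replace the non-compact domain $\R^{r+1}$ of $S$ by a set that is compact modulo integer translation, and on which the partial derivatives of $S$ are therefore uniformly bounded. First I would observe that whenever $x\in\X_K$, every tuple $(x_j,x_{j+1},\dots,x_{j+r})$ lies in the set
$$\Delta_K:=\{(u_0,\dots,u_r)\in\R^{r+1}\ |\ |u_{k+1}-u_k|\le K\ \text{for}\ k=0,\dots,r-1\}\, .$$
This set $\Delta_K$ is convex, it is invariant under the diagonal translation $(u_0,\dots,u_r)\mapsto(u_0+1,\dots,u_r+1)$, and $\Delta_K\cap\{0\le u_0\le 1\}$ is closed and bounded, hence compact (indeed $|u_k-u_0|\le rK$ there). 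Since $S$ is continuously differentiable, each $\p_kS$ is continuous and so bounded on this compact slice; by condition {\bf A} the same bound holds on all of $\Delta_K$, so that $D_0:=\max_{0\le k\le r}\sup_{\Delta_K}|\p_kS|<\infty$.

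Next I would estimate the two actions term by term. Fix $j$ and $x,y\in\X_K$, and consider the straight segment $\gamma(t):=(1-t)(y_j,\dots,y_{j+r})+t(x_j,\dots,x_{j+r})$ for $t\in[0,1]$. Its consecutive differences are convex combinations of those of $x$ and of $y$, each of absolute value at most $K$, so $\gamma(t)\in\Delta_K$ for all $t$; this is exactly the convexity of $\Delta_K$, and it is the one point that must be checked with care. The fundamental theorem of calculus then gives
$$|S(x_j,\dots,x_{j+r})-S(y_j,\dots,y_{j+r})|=\Bigl|\int_0^1\sum_{k=0}^r\p_kS(\gamma(t))\,(x_{j+k}-y_{j+k})\,dt\Bigr|\le D_0\sum_{k=0}^r|x_{j+k}-y_{j+k}|\, .$$

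Finally I would sum this over $j\in[i_0,i_1]$ and interchange the order of summation. A given index $i$ occurs in the double sum $\sum_{j=i_0}^{i_1}\sum_{k=0}^r$ only for $i\in[i_0,i_1+r]$, and then at most $r+1$ times, so
$$|W_{[i_0,i_1]}(x)-W_{[i_0,i_1]}(y)|\le D_0\sum_{j=i_0}^{i_1}\sum_{k=0}^r|x_{j+k}-y_{j+k}|\le (r+1)D_0\!\!\sum_{i\in[i_0,i_1+r]}\!\!|x_i-y_i|\, ,$$
which is the assertion with $D:=(r+1)D_0$. I do not expect a genuine obstacle here: the estimate is a routine mean-value argument once the $\Z$-invariant convex domain $\Delta_K$, which is compact modulo translation, has been identified, and the fixed combinatorial factor $r+1$ is harmless since the range $r$ is fixed.
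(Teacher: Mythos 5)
Your proof is correct and follows essentially the same route as the paper: use periodicity plus compactness to get a uniform bound on $\p_kS$, then interpolate along a straight segment (exploiting convexity of the domain) and sum. The only cosmetic difference is that you work with the finite-dimensional set $\Delta_K\subset\R^{r+1}$ and its compact fundamental slice, whereas the paper phrases the same compactness argument in terms of $\X_K/\Z\subset\R^{\Z}/\Z$ together with a shift of indices; the constant $D=(r+1)D_0$ matches the paper's $D=c(r+1)$.
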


\begin{proof} 
By property {\bf A}, the local potential $x\mapsto S(x_0, \ldots, x_{r})$ defines a function on $\R^{\Z}/\Z$. Moreover, the subset $\X_{K}/ \Z \subset \R^{\Z}/ \Z$ is compact for the topology of pointwise convergence. Hence, because $S$ was assumed continuously differentiable, there is a constant $c>0$ so that 
$$|\p_{k}S(x_0, \ldots, x_{r}) |\leq c\ \mbox{for all}\ k\in \Z \ \mbox{and all} \ x\in \X_K\, .$$
On the other hand, if we define the sequence $z:\Z\to\R$ by $z_i:=x_{i+j}$, then clearly $z\in \X_{K}$ if $x\in \X_{K}$ and $\p_{k}S(x_j, \ldots, x_{j+r}) = \p_{k-j}S(z_0, \ldots, z_{r})$.  As a result,  
$$|\p_{k}S(x_j, \ldots, x_{j+r}) |\leq c\ \mbox{uniformly in}\ j \ \mbox{and}\ k \ \mbox{and for all}\ x\in \X_K\, .$$
The desired Lipschitz estimate now follows from interpolation. Indeed, for $x,y\in \X_{K}$, 
\begin{align}\nonumber
& |W_{[i_0,i_1]}(x)-W_{[i_0,i_1]}(y)| \leq \sum_{j\in [i_0,i_1]} \left| S(x_j, \ldots, x_{j+r})-S(y_j, \ldots, y_{j+r}) \right| = \\ \nonumber
& \sum_{j\in [i_0,i_1]} \left| \int_0^1\frac{d}{d\tau} S(\tau x_j+(1-\tau)y_j, \ldots, \tau x_{j+r}+(1-\tau)y_{j+r} )d\tau \right| \leq \\ \nonumber  & 
 \sum_{j\in [i_0,i_1]} \sum_{k=j}^{j+r} \left( \int_0^1\left| \p_kS(\tau x_j+(1-\tau)y_j, \ldots, \tau x_{j+r}+(1-\tau)y_{j+r} )\right| d\tau\right)  |x_k-y_k| \leq \\ \nonumber  &  
\sum_{j\in [i_0, i_1]} \sum_{k=j}^{j+r} c\, |x_k-y_k| \leq  c\, (r+1)\!\!\!\sum_{i\in [i_0, i_1+r]} |x_i-y_i|\ .
\end{align}
Note: we used that $\X_{K}$ is convex. This proves the proposition if we choose $D:=c\, (r+1)$ and $c=||S||_{C^1(\X_K)}$.
\end{proof}
\noindent It follows from (\ref{birkhoffest}) that $\mathcal{B}_{\omega}\subset \X_{K}$ if $K\geq 2+|\omega|$. Nevertheless, the set $\X_K$ is quite a bit larger than $\B_{\omega}$. It will be convenient to have inequality (\ref{energyestimate}) on the whole of $\X_{K}$, to produce energy estimates for sequences that are ``not too far from Birkhoff''.



\section{The Peierls barrier}
In this section, we introduce the Peierls barrier. It is a tool to distinguish foliations from laminations. Most importantly, it determines whether for a given $\xi\in \R$ there exists a global minimizer $x\in \mathcal{M}_{\omega}$ that satisfies the initial condition $x_0=\xi$. 
\subsection{Constrained minimizers}\label{constrainedsection}
We start this section by introducing a type of constrained minimizers:
\begin{definition}
For $\xi\in \R$, a sequence $x: \Z\to\R$ is called a global $\xi$-\textit{minimizer}, if $x_0=\xi$ and if it holds for all finite intervals $[i_0, i_1]\subset \Z$ and for all $v:\Z \to \R$ with ${\rm supp}(v) \subset [i_0+r, i_1]$ and $v_0=0$ that $$W_{[i_0, i_1]}(x) \leq W_{[i_0, i_1]}(x+v)\, .$$
\end{definition}
\noindent Not surprisingly, global $\xi$-minimizers in general need not be solutions to (\ref{recrel}). 

By an obvious analogue of Proposition \ref{minimizerslimit}, the collection of global $\xi$-minimizers is closed under pointwise convergence. In particular, one can hope to construct quasi-periodic global $\xi$-minimizers as limits of periodic $\xi$-minimizers:
\begin{definition}
For $\xi\in \R$ and $p,q\in \Z$, a periodic sequence $x\in \X_{p,q}$ is called a {\it periodic} $\xi$-\textit{minimizer} if $x_0=\xi$ and if
$$W_{p,q}(x) \leq W_{p,q}(y)\ \mbox{for all}\ y\in \X_{p,q}\ \mbox{with} \ y_0=\xi\, .$$
We denote the collection of $(p,q)$-periodic $\xi$-minimizers by 
$$\mathcal{M}_{p,q}(\xi):= \{x\in \X_{p,q}\, |\, x \ \mbox{is a periodic}\ \xi\mbox{-minimizer}\, \}\, .$$
\end{definition}
\noindent It follows from condition {\bf C} that periodic $\xi$-minimizers of all periods exist for all $\xi\in\R$. It also turns out that periodic $\xi$-minimizers satisfy a (weak) version of the Aubry lemma. 

\begin{lemma}\label{variant AL}
Let $p,q\in \Z$ be relative prime, let $\xi\in \R$ and let $x \in \mathcal{M}_{p,q}(\xi)$ be a periodic $\xi$-minimizer. Then there are periodic minimizers $x^{-}, x^{+}\in \M_{p,q}$ such that 
$$x^{-} \leq x \leq x^{+}$$
and for which there is no $y\in \mathcal{M}_{p,q}$ with $x^{-} < y < x^{+}$.  As a consequence, $\mathcal{M}_{p,q}(\xi) \subset \mathcal{B}_{p,q}$ and every $x\in \mathcal{M}_{p,q}(\xi)$ is a global $\xi$-minimizer.
\end{lemma}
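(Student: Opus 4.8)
The plan is to mimic the classical proof of Aubry's lemma (the non-crossing lemma for periodic minimizers), but adapted to the constrained setting. First I would produce the candidates $x^-$ and $x^+$. Since $\mathcal{M}_{p,q}$ is nonempty, closed under pointwise convergence and strictly ordered (Proposition \ref{periodicminimizersproposition}), and since the value $y_0$ ranges over a closed subset of $\R$ as $y$ ranges over $\mathcal{M}_{p,q}$, I would set
$$x^- := \max\{ y\in \mathcal{M}_{p,q}\, |\, y_0\leq \xi\}, \qquad x^+ := \min\{ y\in \mathcal{M}_{p,q}\, |\, y_0\geq \xi\},$$
where the extrema are taken with respect to the ordering (they exist by compactness of $\mathcal{M}_{p,q}/\Z$ via Proposition \ref{proprotnr} together with closedness; if $\xi = y_0$ for some minimizer one takes $x^- = x^+ = y$ and the statement is trivial since then $x$ is squeezed and one argues directly). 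By construction $x^-_0\leq \xi\leq x^+_0$, and by strict ordering of $\mathcal{M}_{p,q}$ there is no $y\in\mathcal{M}_{p,q}$ strictly between $x^-$ and $x^+$; also $x^-\ll x^+$ componentwise wherever they differ, and in fact $x^-\ll x^+$ strictly on all of $\Z$ unless they coincide. It remains to show $x^-\leq x\leq x^+$.

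The key step is the inequality $x\leq x^+$ (the inequality $x^-\leq x$ is symmetric). Suppose not, so the periodic sequence $z := \max(x, x^+)$ differs from $x^+$, and $w := \min(x, x^+)$ differs from $x$; note $z_0 = x^+_0$ and $w_0 = x_0 = \xi$, since $x^+_0\geq\xi = x_0$. The standard Aubry trick is the ``min-max'' action identity: for any two sequences,
$$W_{p,q}(\max(x,x^+)) + W_{p,q}(\min(x,x^+)) \leq W_{p,q}(x) + W_{p,q}(x^+),$$
which holds because $S$ satisfies the monotonicity condition {\bf B} (this is exactly where the twist condition enters; the inequality is the discrete analogue of the fact that the Lagrangian is ``submodular'' under $\max/\min$ of configurations, and is proved by a term-by-term estimate using $\partial_{i,k}S\leq 0$ for $i\neq k$ — cf. \cite{gole01,MramorRink1}). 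Now $z = \max(x,x^+)\in\X_{p,q}$ is an admissible competitor for $x^+$ in the definition of periodic minimizer, so $W_{p,q}(x^+)\leq W_{p,q}(z)$; and $w=\min(x,x^+)\in\X_{p,q}$ satisfies $w_0 = \xi$, so it is an admissible competitor for $x$ in the definition of periodic $\xi$-minimizer, giving $W_{p,q}(x)\leq W_{p,q}(w)$. Adding these two inequalities and comparing with the min-max identity forces all inequalities to be equalities; in particular $z = \max(x,x^+)$ is itself a $(p,q)$-minimizer. But $z\geq x^+$ with $z\neq x^+$, and $z$ has rotation number $q/p$, so either $z$ is strictly above $x^+$ everywhere (contradicting minimality of $x^+$ among minimizers with $(\cdot)_0\geq\xi$, since $z_0 = x^+_0\geq\xi$ would make $z$ a smaller-indexed... — more carefully, $z$ and $x^+$ are distinct minimizers with the same zeroth coordinate, contradicting the strict ordering of $\mathcal{M}_{p,q}$ from Proposition \ref{periodicminimizersproposition}). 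This contradiction proves $x\leq x^+$.

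For the consequences: once $x^-\leq x\leq x^+$ with $x^-, x^+\in\mathcal{B}_{p,q}$ having the same rotation number $q/p$ and no minimizer strictly between them, the Birkhoff property of $x$ follows because any integer translate $\tau_{k,l}x$ is again squeezed between $\tau_{k,l}x^-$ and $\tau_{k,l}x^+$, and Proposition \ref{numbertheory} together with the fact that $x^-,x^+$ are adjacent Birkhoff sequences (so their translates are totally ordered and interleave in a controlled way, by \ref{proprotnr}) forces $\{\tau_{k,l}x\}$ to be ordered — here one uses that $x^-$ and $x^+$ are consecutive in the sense that $\tau_{k,l}x^-$ and $x^+$ can only be ordered, never crossing. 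Finally, that every $x\in\mathcal{M}_{p,q}(\xi)$ is a \emph{global} $\xi$-minimizer follows by the same argument that shows periodic minimizers are global minimizers (the last sentence of Proposition \ref{periodicminimizersproposition}): one compares $x$ on a finite interval with $x+v$, $\mathrm{supp}(v)\subset[i_0+r,i_1]$, $v_0=0$, extends the comparison to a long period using periodicity and the min-max trick to cut off the variation, and invokes periodic $\xi$-minimality.

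The main obstacle I expect is the consequence ``$\mathcal{M}_{p,q}(\xi)\subset\mathcal{B}_{p,q}$'': squeezing $x$ between two adjacent Birkhoff minimizers of the same rotation number does not immediately give that $x$ itself is Birkhoff, since $x$ need not be a minimizer — one genuinely needs that $x^-$ and $x^+$ are \emph{consecutive} (no gap-filling minimizer between them) and must check that no translate $\tau_{k,l}x$ can cross $x$ by getting ``trapped'' using the barrier formed by the translates of $x^-$ and $x^+$; making this precise is the delicate part, and is where Proposition \ref{numbertheory} does the real work.
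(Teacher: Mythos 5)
Your construction of $x^{\pm}$ and the min-max squeeze argument are essentially the paper's proof, just written on the $x^{+}$ side rather than the $x^{-}$ side (symmetric and equivalent). Where you stop short is precisely the place you flag as "the delicate part": the step showing $\mathcal{M}_{p,q}(\xi)\subset\mathcal{B}_{p,q}$. You gesture at translates, adjacency, and Proposition \ref{numbertheory} doing "the real work," but you never actually produce the argument, and you describe the issue as though it required some subtle interleaving analysis. It does not; once you have the squeeze plus adjacency, the chain is three lines and is the heart of the paper's proof.

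Concretely, fix $k,l\in\Z$. If $-k(q/p)+l=0$, then because $p$ and $q$ are coprime one has $(k,l)=(np,nq)$ for some $n$, and therefore $\tau_{k,l}x=x$ by $(p,q)$-periodicity — a case your writeup does not address. If $-k(q/p)+l>0$, apply Proposition \ref{numbertheory} to the Birkhoff minimizer $x^{-}$ to get $\tau_{k,l}x^{-}>x^{-}$; since $\tau_{k,l}x^{-}\in\mathcal{M}_{p,q}$ by translation-invariance and there is no minimizer strictly between $x^{-}$ and $x^{+}$, this forces $\tau_{k,l}x^{-}\geq x^{+}$. Now chain:
\begin{align*}
\tau_{k,l}x \;\geq\; \tau_{k,l}x^{-} \;\geq\; x^{+} \;\geq\; x\,,
\end{align*}
and the case $-k(q/p)+l<0$ is symmetric. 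This is exactly the "gap-jumping" you needed, and it uses only one side of your squeeze together with adjacency — no trapping argument about translates of $x$ crossing $x$ is required. Since you did not write this chain and explicitly signaled uncertainty about how to complete it, I would count this as a genuine (though readily fillable) gap. The final consequence, that periodic $\xi$-minimizers are global $\xi$-minimizers, you correctly defer to the standard periodic-to-global argument, as the paper does.
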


\begin{proof}
Because $\mathcal{M}_{p,q}$ is strictly ordered, we can define 
$$x^{-}:= \sup\{ y\in \mathcal{M}_{p,q}\, |\, y_0 \leq \xi\, \}\ \mbox{and} \ x^{+}:= \inf\{ y\in \mathcal{M}_{p,q}\, |\, y_0 \geq \xi\, \}\, .$$ 
Then $x^{\pm}\in \mathcal{M}_{p,q,}$ and there is no $y\in \M_{p,q}$ with $x^{-} <y < x^{+}$.

To prove that $x^{-} < x < x^{+}$, we remark that condition {\bf B} implies that the periodic action satisfies a minimum-maximum principle. Indeed, if we define $m,M\in \X_{p,q}$ as $$m_i:=\min\{x_i,x^{-}_i\} \ \mbox{and}\ M_i:=\max\{x_i,x^{-}_i\}\, ,$$ 
 then it holds that 
\begin{align}\label{minmax}
W_{p,q}(M)+W_{p,q}(m) \leq W_{p,q}(x)+W_{p,q}(x^{-})\, .
\end{align}
For a proof of this estimate, see \cite{MramorRink1} (Lemma 4.4). Because $x$ is minimal subject to $\xi$, and because $M_0=\xi$, it must hold that $W_{p,q}(M)\geq W_{p,q}(x)$ and hence by (\ref{minmax})  that $W_{p,q}(m)\leq W_{p,q}(x^{-})$. This implies that $m\in \M_{p,q}$ and because $m_0=x^{-}_0$ that $m=x^{-}$. Hence, $x^{-} \leq x$. A similar deduction implies that $x\leq x^{+}$. 

To prove that $x$ is Birkhoff, let $k,l\in \Z$. If $-k(q/p)+l=0$ then $k=np$ and $l=nq$ because $p$ and $q$ were assumed relative prime. This implies that $\tau_{k,l}x=x$ because $x\in \X_{p,q}$. Otherwise, if for example $-k(q/p)+l>0$, then by Proposition \ref{numbertheory} it must hold that $\tau_{k,l}x^{-} > x^{-}$. But then it follows that $\tau_{k,l}x^{-} \geq x^{+}$ and hence also that  $\tau_{k,l}x \geq \tau_{k,l}x^{-} \geq x^{+}\geq x$. A similar argument in case that $-k(q/p)+l<0$ proves that $x$ is Birkhoff.

Finally, the argument showing that every periodic $\xi$-minimizer is a global $\xi$-minimizer is identical to the proof that every periodic minimizer is a global minimizer. One can copy  this argument verbatim from Lemma 4.7 and Theorem 4.8 in \cite{MramorRink1}. 
\end{proof}

\noindent As expected, Lemma \ref{variant AL} implies that nonperiodic global $\xi$-minimizers can be constructed as limits of periodic $\xi$-minimizers. More precisely, when $\omega\in \R\backslash \Q$ and $q_n/p_n\to\omega$, then one may select periodic $\xi$-minimizers $x^{p_n,q_n}(\xi)\in \mathcal{M}_{p_n, q_n}(\xi)$ and periodic minimizers $x^{p_n,q_n,\pm} \in \mathcal{M}_{p_n,q_n}$ with the properties described in Lemma \ref{variant AL}. By compactness these can be chosen (by passing three times to a subsequence if necessary) in such a way that $x^{p_n,q_n,\pm}\to x^{\omega, \pm}\in \mathcal{M}_{\omega}$ and $x^{p_n,q_n}(\xi)\to x^{\omega}(\xi)$. Then $x^{\omega}(\xi)\in \mathcal{B}_{\omega}$ is a global $\xi$-minimizer and we have the inequalities
$$x^{\omega,-}\leq x^{\omega}(\xi) \leq x^{\omega,+}\, .$$
We finish this section with a technical result about these quasi-periodic minimizers that we will need below. It says that the gap $$[x^{\omega,-}, x^{\omega,+}]:= \{\, x\in \R^{\Z}\, |\, x^{\omega,-} \leq x \leq x^{\omega,+}\, \}$$
between $x^{\omega,-}$ and $x^{\omega,+}$  is bounded in $l_1(\Z)$.
\begin{lemma}
It holds that 
$$\sum_{j\in \Z} \left| x^{\omega,+}_j - x^{\omega,-}_j  \right| \leq 1\, .$$
 \end{lemma}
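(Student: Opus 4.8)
The plan is to control the $l_1$-size of the gap $[x^{\omega,-},x^{\omega,+}]$ by first establishing the analogous bound in the periodic approximation and then passing to the limit. The key observation is that $x^{p_n,q_n,-}$ and $x^{p_n,q_n,+}$ are two \emph{consecutive} elements of the strictly ordered set $\mathcal{M}_{p_n,q_n}$, meaning there is no $(p_n,q_n)$-minimizer strictly between them. I would show that for any such consecutive pair $y^{-}<y^{+}$ in $\mathcal{M}_{p,q}$ one has $\sum_{j=0}^{p-1}(y^{+}_j-y^{-}_j)\le 1$, and hence by $(p,q)$-periodicity the full sum $\sum_{j\in\Z}(y^{+}_j-y^{-}_j)$ is also bounded — wait, that last step fails, so instead the correct normalization must be that the \emph{per-period} sum is at most $1$ and one must be slightly more careful. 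Let me restate: the right claim is that $\sum_{j\in[0,p-1]}(y^{+}_j-y^{-}_j)\le 1$, and then the limiting sequences $x^{\omega,\pm}$ inherit $\sum_{j\in\Z}(x^{\omega,+}_j-x^{\omega,-}_j)\le 1$ because the period $p_n\to\infty$ spreads a fixed total mass over longer and longer windows; one takes the limit window by window using Fatou.

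The heart of the matter is the periodic estimate. Here I would use the translates $\tau_{k,l}y^{-}$ for $k,l\in\Z$. Since $y^{-}$ is Birkhoff of rotation number $q/p$, Proposition~\ref{numbertheory} tells us $\tau_{k,l}y^{-}>y^{-}$ precisely when $-k(q/p)+l>0$; and since $y^{-},y^{+}$ are consecutive in $\mathcal{M}_{p,q}$ and $\tau_{k,l}y^{-}\in\mathcal{M}_{p,q}$, any such translate with $\tau_{k,l}y^{-}>y^{-}$ must in fact satisfy $\tau_{k,l}y^{-}\ge y^{+}$. Now choose $(k,l)$ with $k=1$ and $l$ the unique integer with $0<-q/p+l\le$ (something small); more to the point, by a standard number-theoretic choice (the three-distance / Ostrowski argument, or simply picking the best rational approximant of $q/p$ from above within one period) one finds $(k,l)$ with $0<-k(q/p)+l\le 1/p$ and $|k|\le p$, so that $\tau_{k,l}y^{-}$ is ``just barely'' above $y^{-}$. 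Then $y^{+}-y^{-}\le \tau_{k,l}y^{-}-y^{-}$ pointwise, and summing over one period, $\sum_{j=0}^{p-1}(\tau_{k,l}y^{-}-y^{-})_j=\sum_{j=0}^{p-1}(y^{-}_{j-k}+l-y^{-}_j)=p\cdot l - k\cdot q$ (using $\sum_{j=0}^{p-1}y^{-}_{j-k}=\sum_{j=0}^{p-1}y^{-}_j-kq$ by $(p,q)$-periodicity, since shifting the index by $k$ costs $kq$ over a full period) $=-k q+ l p=p(-k(q/p)+l)\le p\cdot(1/p)=1$. This gives the per-period bound.

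Finally, to pass to the limit: fix any finite window $[-N,N]\subset\Z$. For all $n$ large enough $p_n>2N+1$, so $[-N,N]$ lies inside a single period of $x^{p_n,q_n,\pm}$ (after translating if necessary — but the $x^{p_n,q_n,\pm}$ are already normalized consistently by the construction in the excerpt), hence $\sum_{j=-N}^{N}(x^{p_n,q_n,+}_j-x^{p_n,q_n,-}_j)\le\sum_{j\in[0,p_n-1]}(x^{p_n,q_n,+}_j-x^{p_n,q_n,-}_j)\le 1$. Letting $n\to\infty$ and using pointwise convergence $x^{p_n,q_n,\pm}\to x^{\omega,\pm}$, we get $\sum_{j=-N}^{N}(x^{\omega,+}_j-x^{\omega,-}_j)\le 1$ for every $N$, and then $N\to\infty$ gives the claim. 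The main obstacle I anticipate is the bookkeeping around the number-theoretic choice of $(k,l)$ and making sure the window $[-N,N]$ sits inside a single fundamental domain of the approximating periodic minimizers in a way consistent with the normalization $x^{p_n,q_n,\pm}_0$ used when extracting the convergent subsequence; everything else is a direct application of Proposition~\ref{numbertheory}, the consecutiveness of $y^{\pm}$ in $\mathcal{M}_{p,q}$, and $(p,q)$-periodicity of the summands.
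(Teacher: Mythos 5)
Your proposal is correct and follows the same two-step structure as the paper: establish the per-period bound $\sum_{j=0}^{p-1}(x^{p,q,+}_j - x^{p,q,-}_j)\le 1$ for a consecutive pair in $\mathcal{M}_{p,q}$, then pass to the limit window by window using periodicity, nonnegativity of the differences, and pointwise convergence. The only difference is that you actually prove the periodic bound via the B\'ezout/translate argument ($\tau_{s,t}y^-$ with $pt-qs=1$ being the smallest translate above $y^-$, hence $\ge y^+$ by consecutiveness, with per-period $l^1$-mass exactly $1$), whereas the paper cites it as a standard pigeonhole fact from \cite{MramorRink1} (Theorem 10.2); your argument is essentially the one appearing in Step~3 of the paper's proof of Theorem~\ref{regularitytheorem}.
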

\begin{proof}
Whenever $p_n, q_n$ are relative prime and $x^{p_n,q_n,-}\leq x^{p_n,q_n,+}$ are elements of $\mathcal{M}_{p_n,q_n}$ for which there is no $y\in \mathcal{M}_{p_n,q_n}$ with $x^{p_n,q_n,-} < y < x^{p_n,q_n,+}$, then
 \begin{align}\label{lessthanoneperiodic}
 \sum_{j=1}^{p_n} \left| x^{p_n, q_n,+}_j - x^{p_n, q_n,-}_j \right| \leq 1\, .
\end{align} 
For a proof of this standard fact, based on the pigeonhole principle, see for example \cite{MramorRink1} (Theorem 10.2). 


The same estimate remains true in the limit: if $x^{p_n, q_n,\pm}\to x^{\omega, \pm}$ pointwise, then for all $M>0$ and all $\varepsilon>0$ there is an $N>0$ so that for all $-M\leq j\leq M$ and $n\geq N$ it holds that $|x^{\omega, \pm}_j - x^{p_n, q_n,\pm}_j| < \varepsilon/ 2(2M+1)$. As a consequence, when $2M+1\leq p_n$,
$$\sum_{j=-M}^M \left| x^{\omega, +}_j-x^{\omega, -}_j \right| \leq \sum_{j=-M}^M  |x^{\omega, +}_j - x^{p_n,q_n, +}_j|+ |x^{p_n,q_n, +}_j - x^{p_n,q_n, -}_j| + | x^{p_n,q_n, -}_j-x^{\omega, -}_j |  \leq 1+\varepsilon\, .$$
Since this is true for all $M$ and $\varepsilon$, the lemma follows.
\end{proof}

\subsection{Definition and properties of the Peierls barrier}
To distinguish $\xi$-minimizers from minimizers, we now introduce the Peierls barrier. It compares the action of a $\xi$-minimizer to the action of an unconstrained minimizer. The periodic Peierls barrier $P_{q/p}(\xi)$ is most easily defined, namely as follows:

\begin{definition}
For $\xi\in\R$ and $q/p\in \Q$ a rational in lowest terms, we define the periodic Peierls barrier $P_{q/p}(\xi)$ as
$$P_{q/p}(\xi) :=  \!\!\!\! \min_{\tiny \begin{array}{c} x\in \X_{p,q} \\ x_0=\xi \end{array}}  \!\!\!\! W_{p,q}(x) - \!\!\!\! \min_{\tiny \begin{array}{c} x\in \X_{p,q} \end{array}}  \!\!\!\! W_{p,q}(x) \geq 0\, . 
 $$
\end{definition}
\noindent For the convenience of the reader, we state the following as a proposition:
\begin{proposition}\label{obviousprop}
Let $q/p\in\Q$ be a rational in lowest terms and $\xi\in \R$. There exists a periodic minimizer $x\in \mathcal{M}_{p,q}$ with $x_0=\xi$ if and only if $P_{q/p}(\xi)=0$. In particular, $\mathcal{M}_{p,q}$ is a foliation if and only if $P_{q/p}\equiv 0$.
\end{proposition}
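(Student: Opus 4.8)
The plan is to reduce the statement to the fact -- recalled in Sections \ref{per section} and \ref{constrainedsection} as consequences of condition {\bf C} -- that both $W_{p,q}$ on $\X_{p,q}$ and its restriction to $\{x\in\X_{p,q}\,:\,x_0=\xi\}$ attain their minimum, by a $(p,q)$-minimizer and by a periodic $\xi$-minimizer respectively. Once this is granted, the asserted equivalence is essentially a tautology about comparing a minimum over a set with a minimum over one of its subsets, and I expect no genuine difficulty beyond it.

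I would first prove ``$\Leftarrow$''. Assume $P_{q/p}(\xi)=0$ and pick a periodic $\xi$-minimizer $x\in\X_{p,q}$, so that $x_0=\xi$ and $W_{p,q}(x)=\min_{y\in\X_{p,q},\,y_0=\xi}W_{p,q}(y)$. By the hypothesis $P_{q/p}(\xi)=0$, this value equals $\min_{y\in\X_{p,q}}W_{p,q}(y)$; hence $x$ is a $(p,q)$-minimizer, that is $x\in\mathcal{M}_{p,q}$, and it satisfies $x_0=\xi$.

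For ``$\Rightarrow$'', assume there is $x\in\mathcal{M}_{p,q}$ with $x_0=\xi$. Then $x$ is admissible in the constrained problem, so
$$\min_{\substack{y\in\X_{p,q}\\ y_0=\xi}}W_{p,q}(y)\ \leq\ W_{p,q}(x)\ =\ \min_{y\in\X_{p,q}}W_{p,q}(y)\, .$$
As the left-hand side is a minimum over a subset of $\X_{p,q}$, it is also $\geq$ the right-hand side, so the two are equal and $P_{q/p}(\xi)=0$.

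The ``in particular'' clause then follows by quantifying over $\xi\in\R$: by definition $\mathcal{M}_{p,q}$ is a foliation exactly when for every $\xi$ there exists $x\in\mathcal{M}_{p,q}$ with $x_0=\xi$, which by the equivalence just established happens exactly when $P_{q/p}(\xi)=0$ for all $\xi$, that is when $P_{q/p}\equiv 0$. The only point that is not completely formal is the attainment of the two minima, which is precisely where coercivity (condition {\bf C}) enters; everything else is bookkeeping with the definitions.
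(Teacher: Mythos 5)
Your argument is correct and is exactly what the paper has in mind when it dismisses this proposition as ``obvious from the definition''; you simply spell out the comparison of constrained and unconstrained minima, with attainment coming from condition {\bf C}. No difference in approach worth noting.
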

\begin{proof}
Obvious from the definition of the periodic Peierls barrier.
\end{proof} 
\begin{remark}
We will not define the ``asymptotically periodic'' Peierls barrier functions $P_{(q/p)^+}(\xi)$ and $P_{(q/p)^-}(\xi)$. Unlike in \cite{Matherpeierls}, our arguments do not make use of these quantities.
\end{remark}
\noindent Next, we introduce the quasi-periodic Peierls barrier:
\begin{definition}\label{qppeierlsdef2}
For $\xi\in \R$ and $\omega\in\R\backslash \Q$ we define the quasi-periodic Peierls barrier as
\begin{align}\label{defirratpeierls2}
P_{\omega}(\xi) := \lim_{q/p\to \omega} P_{q/p}(\xi)\, . 
\end{align}
\end{definition}
\noindent Of course, it is not at all clear a priori that the limit $\lim_{q/p\to \omega} P_{q/p}(\xi)$ is well-defined. This fact will be proved in Section \ref{modulussection}. 
\begin{remark}
In \cite{Matherpeierls} the quasi-periodic Peierls barrier is defined differently, namely as 
\begin{align}\label{matherdef}
P_{\omega}(\xi) := \!\!\!\! \min_{\tiny \begin{array}{c} y^{\omega,-}\leq x \leq y^{\omega,+} \\ x_0=\xi \end{array}} \ \sum_{j\in \Z}  \left( S(x_j, \ldots, x_{j+r}) - S(y_j^{\omega, -}, \ldots, y_{j+r}^{\omega,-}) \right)\, .
\end{align}
Here, $y^{\omega, \pm}$ are the two nearest recurrent minimizers to $\xi$, that is
$$y^{\omega,-} :=\sup \{z\in \mathcal{M}_{\omega}^{\rm rec} \, | \, z_0 \leq \xi\, \}\ \mbox{and}\ y^{\omega,+} :=\inf \{z\in \mathcal{M}_{\omega}^{\rm rec} \, | \, z_0 \geq \xi\, \}\, .$$ 
Another reasonable option would have been to define $P_{\omega}(\xi)$ by a formula identical to (\ref{matherdef}), but with $y^{\omega, \pm}$ replaced by the global minimizers $x^{\omega, \pm}$ that were constructed in Section \ref{constrainedsection}. 

Nevertheless, we found Definition \ref{qppeierlsdef2} by far the most convenient one to work with. In fact, it can be proved that all these definitions yield the same value for $P_{\omega}(\xi)$. \hfill $\triangle$
\end{remark}
\noindent For the remainder of this section, we will simply assume that the quasi-periodic Peierls barrier is well-defined by (\ref{defirratpeierls2}), so that for any sequence of rationals $q_n/p_n$ in lowest terms that converges to $\omega$, the limit $\lim_{n\to\infty} P_{q_n/p_n}(\xi)$ exists and is independent of the chosen sequence of rationals. Then we can prove the following analogue of Proposition \ref{obviousprop}:

\begin{theorem}\label{foliationtheorem}
Let $\omega\in \R\backslash\Q$ and $\xi\in \R$. There exists a global minimizer $x\in \mathcal{M}_{\omega}$ with $x_0=\xi$ if and only if $P_{\omega}(\xi)=0$. In particular, $\mathcal{M}_{\omega}$ is a foliation if and only if $P_{\omega}\equiv 0$.
\end{theorem}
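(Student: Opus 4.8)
The plan is to prove the equivalence ``there is an $x\in\mathcal{M}_\omega$ with $x_0=\xi$ if and only if $P_\omega(\xi)=0$'' by establishing the two implications separately; the statement about foliations then follows at once from the definition of a foliation together with the fact that $P_\omega\geq 0$ (which holds because each $P_{q/p}\geq 0$). Throughout one uses the standing assumption of this section, that $P_\omega(\xi)=\lim_{n\to\infty}P_{q_n/p_n}(\xi)$ exists and is independent of the chosen sequence of rationals $q_n/p_n\to\omega$ in lowest terms.

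\emph{From $P_\omega(\xi)=0$ to a minimizer through $\xi$.} I would fix any sequence $q_n/p_n\to\omega$ (so $p_n\to\infty$ and $P_{q_n/p_n}(\xi)\to 0$) and pick periodic $\xi$-minimizers $x^n\in\mathcal{M}_{p_n,q_n}(\xi)$. By Lemma~\ref{variant AL} the $x^n$ are Birkhoff, and since $x^n_0=\xi$ for all $n$, compactness of the set of Birkhoff sequences of bounded rotation number yields a subsequence with $x^n\to x^\infty$ pointwise, where $x^\infty\in\mathcal{B}_\omega$ and $x^\infty_0=\xi$ by Proposition~\ref{proprotnr}. To see that $x^\infty$ is an \emph{unconstrained} global minimizer, fix a finite interval $[i_0,i_1]$ and $v$ with $\mathrm{supp}(v)\subset[i_0+r,i_1]$, and compare $x^n$ with $x^n+v^{\mathrm{per}}$, the $p_n$-periodic extension of $v|_{[i_0+r,i_1]}$: this is a competitor in $\X_{p_n,q_n}$ for the unconstrained periodic minimum, and using that a short computation from condition {\bf A} gives $W_{p_n,q_n}(\cdot)=W_{[a,a+p_n-1]}(\cdot)$ on $\X_{p_n,q_n}$ for every $a$, one obtains $W_{[i_0,i_1]}(x^n+v)\geq W_{[i_0,i_1]}(x^n)-P_{q_n/p_n}(\xi)$ for all large $n$. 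Letting $n\to\infty$ and using continuity of $W_{[i_0,i_1]}$ for pointwise convergence yields $W_{[i_0,i_1]}(x^\infty+v)\geq W_{[i_0,i_1]}(x^\infty)$ for \emph{all} such $v$, with no constraint at $i=0$; hence $x^\infty$ is a global minimizer, so $x^\infty\in\mathcal{M}_\omega$.

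\emph{From a minimizer through $\xi$ to $P_\omega(\xi)=0$.} Given $x\in\mathcal{M}_\omega$ with $x_0=\xi$, it suffices to produce one sequence $q_n/p_n\to\omega$ along which $P_{q_n/p_n}(\xi)\to 0$, since $P_\omega(\xi)$ exists and is sequence-independent. I would take $q_n/p_n\to\omega$ and, following Lemma~\ref{variant AL}, the periodic minimizers $x^{n,-}:=\sup\{y\in\mathcal{M}_{p_n,q_n}\mid y_0\leq\xi\}\leq x^{n,+}:=\inf\{y\in\mathcal{M}_{p_n,q_n}\mid y_0\geq\xi\}$; passing to a subsequence, $x^{n,\pm}\to x^{\infty,\pm}\in\mathcal{M}_\omega$. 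Passing to the limit in the periodic estimate~(\ref{lessthanoneperiodic}) gives $\sum_j|x^{\infty,+}_j-x^{\infty,-}_j|\leq 1$, and since $x^{\infty,-}_0\leq\xi\leq x^{\infty,+}_0$, strict ordering of $\mathcal{M}_\omega$ forces $x^{\infty,-}\leq x\leq x^{\infty,+}$, so $\sum_j|x_j-x^{\infty,-}_j|\leq 1$ and in particular $|x_k-x^{\infty,-}_k|\to 0$ as $|k|\to\infty$. I would then build a $(p_n,q_n)$-periodic test sequence $z^n$ with $z^n_0=\xi$ that equals the genuine minimizer $x$ on a window $[-N_n,N_n]$ around $0$, equals the periodic minimizer $x^{n,-}$ outside a slightly larger window, and interpolates over $r$ steps at the two ends; here $N_n\to\infty$ is chosen small compared with $p_n$ (by a diagonal argument, refining the subsequence of $q_n/p_n$ if necessary) so that $|x_k-x^{n,-}_k|$ is as small as we like for $|k|$ near $N_n$. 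Then $P_{q_n/p_n}(\xi)\leq W_{p_n,q_n}(z^n)-W_{p_n,q_n}(x^{n,-})$, and up to $O(r)$ boundary terms this difference splits as $\bigl(W_{[-N_n,N_n]}(z^n)-W_{[-N_n,N_n]}(x)\bigr)+\bigl(W_{[-N_n,N_n]}(x)-W_{[-N_n,N_n]}(x^{n,-})\bigr)$. The first bracket is $O(r)$ times the (small) boundary mismatch, by the Lipschitz estimate of Proposition~\ref{lipschitz}; for the second I would use that $x$ and $x^{n,-}$ are \emph{both} global minimizers and pointwise $\delta_n$-close near $\pm N_n$, so cutting one onto the other across those points changes the action by $O(r\delta_n)\to 0$. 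Hence $P_{q_n/p_n}(\xi)\to 0$ and therefore $P_\omega(\xi)=0$.

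The hard part is this second implication, and within it the point that the gluing cost must \emph{vanish}, not merely stay bounded: since $x$ has irrational rotation number it is $O(1)$-far from every periodic sequence, so a naive one-seam periodization of $x$ carries an irreducible $O(1)$ cost. The resolution is to periodize a \emph{periodic minimizer} away from $0$ and to graft $x$ only near $0$, exploiting the summability of the gap $[x^{\infty,-},x^{\infty,+}]$ to locate transition points where $x\approx x^{\infty,-}\approx x^{n,-}$, together with the minimality of both pieces to make the cut-and-paste cheap there. The remaining work, essentially the choice of $N_n$ and of the subsequences, is routine bookkeeping.
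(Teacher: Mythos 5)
Your proof is correct and rests on the same ingredients as the paper's (compactness of Birkhoff sequences via Lemma~\ref{variant AL}, the $\ell^1$-summability of the gap $[x^{\omega,-},x^{\omega,+}]$, the Lipschitz estimate of Proposition~\ref{lipschitz}, and cut-and-paste comparisons between global and periodic minimizers). The only genuine difference is logical organization: the paper argues both directions by contradiction (assuming $x^\omega(\xi)\notin\mathcal{M}_\omega$ in one direction and $P_\omega(\xi)=\varepsilon>0$ in the other), whereas you bound $W_{[i_0,i_1]}(x^\infty+v)-W_{[i_0,i_1]}(x^\infty)\geq -P_{q_n/p_n}(\xi)\to 0$ and $P_{q_n/p_n}(\xi)\leq O(r\delta_n)\to 0$ directly; this is a presentational rather than substantive difference, and your version is arguably cleaner since it avoids having to fix the scale $\varepsilon$ before choosing the cut-off window $M$.
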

\begin{proof}
Recall from Section \ref{constrainedsection} that there exists a particular sequence of rationals $q_n/p_n\to\omega$ for which there are periodic minimizers $x^{n,\pm}:=x^{p_n,q_n, \pm}\in \mathcal{M}_{p_n,q_n}$ and periodic $\xi$-minimizers $x^{n}(\xi):= x^{p_n,q_n}(\xi)\in \mathcal{M}_{p_n,q_n}(\xi)$ such that $x^{n,-}\leq x^n(\xi)\leq x^{n,+}$ and as $n\to\infty$,
$$x^{n,-} \to x^{\omega, -}\in \mathcal{M}_{\omega} \, , \ x^{n,+}\to x^{\omega, +}\in \mathcal{M}_{\omega} \ \mbox{and}\ x^{n}(\xi) \to x^{\omega}(\xi)\in\mathcal{B}_{\omega}\, .$$
Here, $x^{\omega}(\xi)$ is a global $\xi$-minimizer. It holds that  $x^{\omega,-}\leq x^{\omega}(\xi)\leq x^{\omega,+}$ and 
\begin{align}\label{l1again}
\sum_{j\in \Z} \left| x^{\omega,+}_j - x^{\omega,-}_j  \right| \leq 1\, .
\end{align}
{\bf Proof of $\Leftarrow$}: Assume for a start that $x^{\omega}(\xi)\notin\mathcal{M}_{\omega}$. Then there exists a finite support variation $v:\Z\to\R$, say with ${\rm supp}(v)\subset [i_0+r, i_1]$, such that  
$$W_{[i_0, i_1]}(x^{\omega}(\xi)) \geq W_{[i_0, i_1]}(x^{\omega}(\xi)+v) +\varepsilon$$
for some $\varepsilon>0$. Obviously $0\in {\rm supp}(v)$ because $x^{\omega}(\xi)$ is a global $\xi$-minimizer. 

Now we define, for $n$ so large that $p_n \geq i_1-i_0+r+1$, the sequences $X^n\in \X_{p_n,q_n}$ by
$$(X^n)_j := (x^{n}(\xi)+v)_j \ \mbox{for}\  j\in [i_0,i_0+p_n -1]\, .$$
 Then it holds that $X_j^n=(x^{n}(\xi)+v)_j$ for $j\in [i_0, i_1+r]$, so that $$W_{[i_0, i_1]}(X^n) = W_{[i_0, i_1]}(x^n(\xi)+v)\, .$$ 
 Moreover, $X^n_j=(x^n(\xi))_j$ for all $j\in [i_1+1, i_0+p_n-1+r]$ (because ${\rm supp}(v) \subset[i_0+r,i_1]$) and therefore $$W_{[i_1+1, i_0+p_n-1]}(X^n) = W_{[i_1+1, i_0+p_n-1]}(x^n(\xi))\, .$$  
 As a result, 
 \begin{align}
& W_{p_n,q_n}(x^{n}(\xi)) - W_{p_n,q_n}(X^{n}) = W_{[i_0, i_0+p_n-1]}(x^{n}(\xi)) - W_{[i_0, i_0+p_n-1]}(X^{n}) = \nonumber \\ 
 & W_{[i_0, i_1]}(x^{n}(\xi)) - W_{[i_0, i_1]}(X^{n}) + W_{[i_1+1, i_0+p_n-1]}(x^{n}(\xi)) - W_{[i_1+1, i_0+p_n-1]}(X^{n}) = \nonumber \\
& W_{[i_0, i_1]}(x^{n}(\xi)) - W_{[i_0, i_1]}(x^{n}(\xi)+v) \xrightarrow{n\to\infty} 
W_{[i_0, i_1]}(x^{\omega}(\xi))  - W_{[i_0, i_1]}(x^{\omega}(\xi)+v) \geq \varepsilon\, . \nonumber
\end{align}
But obviously $$P_{q_n/p_n}(\xi) \geq W_{p_n,q_n}(x^n(\xi)) - W_{p_n,q_n}(X^n) $$
and we conclude that 
$$\lim_{n\to\infty} P_{q_n/p_n}(\xi) \geq \varepsilon>0\, .$$
This proves that if $P_{\omega}(\xi)=\lim_{n\to\infty} P_{q_n/p_n}(\xi)=0$, then there must be a global minimizer $x\in\mathcal{M}_{\omega}$ with $x_0=\xi$ (namely $x^{\omega}(\xi))$.
\\ \mbox{} \\
{\bf Proof of $\Rightarrow$}:
Next, let us assume that there exists a global minimizer $x\in \mathcal{M}_{\omega}$ with $x_0=\xi$. Because $\mathcal{M}_{\omega}$ is strictly ordered (recall the result by Bangert), it must hold that
$$x^{\omega,-}\leq x\leq x^{\omega,+}\, .$$
Let us argue by contradiction now and assume that 
$P_{\omega}(\xi)=\lim_{n\to\infty} P_{q_n/p_n}(\xi)=\varepsilon>0$.
To show that this is impossible, we choose an integer $M\geq r$ so large that 
$$\left| x^{\omega,+}_j - x^{\omega,-}_j  \right| \leq \varepsilon / 8Dr\ \mbox{for all}\ j\in [-M,-M+r-1] \cup [M+1,M+r]\, .$$
Here, $D$ is as in Proposition \ref{lipschitz}, with $K$ chosen in such a way that $\mathcal{B}_{\omega}\subset \X_K$. Such an $M$ exists because of (\ref{l1again}). 
Now we define
\begin{align}\nonumber 
X_j & :=  \left\{ \begin{array}{ll} x_j & \mbox{if}\ j\in [-M+r,M] \\ x^{\omega,-}_j &\mbox{otherwise}  \end{array}\right. \\ \nonumber 
Y_j & := \left\{ \begin{array}{ll} x^{\omega,-}_j & \mbox{if}\ j\in [-M+r,M] \\ x_j & \mbox{otherwise} \end{array}\right.
\end{align}
It is clear that $X$ is a variation of $x^{\omega,-}$ supported in $[-M+r,M]$ and $Y$ is a variation of $x$ supported in $[-M+r,M]$. On the other hand, by construction $X$ is extremely close to $x$ and $Y$ to $x^{\omega,-}$, in the sense that 
\begin{align}\nonumber 
\sum_{j\in [-M,M+r]} |X_j-x_j| \leq \varepsilon / 4D\ \ \mbox{and}\!\!\! \sum_{j\in [-M,M+r]} |Y_j-x^{\omega,-}_j| \leq \varepsilon / 4D \, .
\end{align}
In particular, it follows from Proposition \ref{lipschitz} that
$$|W_{[-M,M]}(X)-W_{[-M,M]}(x)| \leq \varepsilon/4\ \mbox{and}\ |W_{[-M,M]}(Y)-W_{[-M,M]}(x^{\omega,-})| \leq \varepsilon/4\, .$$
Because $x^{\omega,-}$ is a global minimizer, it is clear that 
$W_{[-M,M]}(X)-W_{[-M,M]}(x^{\omega,-}) \geq 0$. More is true though. To explain this, let us define 
 for $n$ so large that $p_n\geq 2M+2r+2$, the periodic sequences $X^n\in \X_{p_n, q_n}$ by 
\begin{align}\nonumber
X^n_j & :=  \left\{ \begin{array}{ll} x_j & \mbox{if}\ j\in [-M+r,M] \\ x^{n,-}_j &\mbox{if}\ j\in [-\lceil p_n/2\rceil,-M+r-1]\cup [M+1, p_n-\lceil p_n/2\rceil -1] \end{array}\right. 
\end{align}  
Then it holds that $X^n_0=x_0=\xi$ for all $n$ and by construction, $X^n\to X$ pointwise as $n\to \infty$. 
As a result, because $X^n=x^{n,-}$ on a large neighborhood outside $[-M+r,M]$,
\begin{align}
& W_{[-M,M]}(X)-W_{[-M,M]}(x^{\omega,-}) =\lim_{n\to \infty} W_{[-M,M]}(X^n)-W_{[-M,M]}(x^{n,-}) \nonumber \\
\nonumber & = \lim_{n\to \infty} W_{[-\lceil p_n/2\rceil, p_n- \lceil p_n/2\rceil - 1]}(X^n)-W_{[-\lceil p_n/2\rceil, p_n- \lceil p_n/2\rceil - 1]}(x^{n,-}) = \\
 \nonumber & = \lim_{n\to \infty} W_{p_n,q_n}(X^n)-W_{p_n,q_n}(x^{n,-}) \geq \lim_{n\to \infty}P_{p_n/q_n}(\xi)  = \varepsilon >0\, .
\end{align}
We conclude that 
\begin{align}\nonumber
& W_{[-M,M]}(x)-W_{[-M,M]}(Y) \geq  
W_{[-M,M]}(X)-W_{[-M,M]}(x^{\omega,-}) \\ \nonumber & - |W_{[-M,M]}(X)-W_{[-M,M]}(x)| - |W_{[-M,M]}(Y)-W_{[-M,M]}(x^{\omega,-})| \geq  \varepsilon - \varepsilon/4- \varepsilon/4 = \varepsilon/2\, .
\end{align} 
But $Y$ is a variation of $x$ supported in $[-M+r, M]$, so this contradicts our assumption that $x$ is a global minimizer.
This shows that if there is a $x\in \mathcal{M}_{\omega}$ with $x_0=\xi$, then $P_{\omega}(\xi)=0$.
\end{proof}
\noindent Of course, we will not use Theorem \ref{foliationtheorem} until after we proved that the limit $P_{\omega}(\xi)=\lim_{q/p\to\omega} P_{q/p}(\xi)$ really exists.

\section{A fundamental estimate}
The aim of this technical section is to prove Theorem \ref{mainthm}, which provides a fundamental estimate for the periodic Peierls barrier. This estimate will eventually imply that the quasi-periodic Peierls barrier is well-defined and it will lead to a proof of Theorem \ref{th1} of the introduction. 

\subsection{A near-periodicity theorem}\label{nearpersection}
We first prove a preliminary result.
To motivate this result, let us briefly investigate, for some $\omega\in \R$, the linear sequence $x^{\omega}$ defined by $x^{\omega}_j:=x_0+ \omega\cdot j$. It holds for all $j\in \Z$ that 
\begin{align}\label{easycomputation}
\left| x^{\omega}_{j+p} - q - x^{\omega}_j\right|  = |p\omega-q|  \, .
\end{align}
Hence, when $|p\omega -q|$ is small, then  $x^{\omega}$ is {\it nearly} $(p, q)$-periodic, even if $\omega\neq q/p$. Theorem \ref{regularitytheorem} below says that periodic Birkhoff sequences of rotation number $\omega=Q/P$ must possess a similar property as soon as $Q/P$ is close to $q/p$. The precise statement is the following:
\begin{theorem} \label{regularitytheorem}
There is a constant $E\geq 1$, depending only on the range of interaction $r$, for which the following holds. Let $p\neq 0$ and $q$ be relative prime integers, $Q/P$ a rational in lowest terms and $x\in\mathcal{B}_{P,Q}$. Then there is an $i_0\in \Z$ with $-|p| < i_0 \leq 0$ such that
\begin{align}\label{pigeintheorem}
 \sum_{j=i_0}^{i_0+r-1} |x_{j+p} - q - x_{j} | \leq E \left( \frac{1}{|p|} + |p(Q/P) - q| \right)\, .
\end{align}
\end{theorem}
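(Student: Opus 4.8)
The plan is to prove this by a pigeonhole argument that carefully exploits the Birkhoff property, in place of Mather's single-crossing trick.

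First I would make some normalizing reductions. Writing $\omega:=Q/P$, the pairs $(p,q)$ and $(-p,-q)$ play symmetric roles, and replacing $x$ by $-x$ flips the sign of $p\omega-q$; so I may assume $p>0$, $P>0$ and $p\omega-q\ge 0$. If $p\omega-q\ge 1$ the estimate is essentially free, since $|x_{j+p}-q-x_j|\le (p\omega-q)+2\le 3(p\omega-q)$ by (\ref{birkhoffest}); so from now on $0\le p\omega-q<1$. By Proposition \ref{numbertheory} one has $\tau_{-p,-q}x\ge x$ and $\tau_{-p,-q-1}x\le x$, which means the ``defect'' $a_j:=x_{j+p}-q-x_j$ satisfies $0\le a_j\le 1$ for all $j$; the quantity to estimate is then $\sum_{j=i_0}^{i_0+r-1}a_j$.

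Next, two structural facts about $a$. Telescoping $\sum_{j=0}^{P-1}a_j$ and using $(P,Q)$-periodicity gives the exact identity $\sum_{j=0}^{P-1}a_j=pQ-Pq=P(p\omega-q)$, so the period-average of $a$ is exactly $p\omega-q$; set $N:=pQ-Pq\ge 0$ (a nonnegative integer, the case $N=0$ being trivial since then $\tau_{p,q}x=x$). Also, non-crossing of $x$ with $\tau_{-1,l}x$ forces $x_{j+1}-x_j\in[\lfloor\omega\rfloor,\lceil\omega\rceil]$, i.e. $x_{j+1}-x_j=\lfloor\omega\rfloor+t_j$ with $t_j\in[0,1]$; then $a_j=(p\lfloor\omega\rfloor-q)+\sum_{i=0}^{p-1}t_{j+i}$, so $a_{j+1}-a_j=t_{j+p}-t_j\in[-1,1]$ — the defect is $1$-Lipschitz. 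Finally, for every $k$ with $k(p\omega-q)<1$, applying Proposition \ref{numbertheory} to $\tau_{-kp,-kq-1}x\le x$ yields $\sum_{i=0}^{k-1}a_{j+ip}\le 1$ for all $j$: along any $p$-arithmetic progression, roughly $P/N$ consecutive entries of $a$ sum to at most $1$.

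Now the argument. I would let $i_0$ range over the $|p|$ values in $(-|p|,0]$, so there are $|p|$ candidate windows; a double-counting step reduces matters to suitably bounding $\sum_{j\in[-|p|+1,\,r-1]}a_j$. When $|p|\ge P$ this is a clean pigeonhole over one full period (via the exact identity and translation-invariance), so the essential case is $|p|<P$. There I would split on the size of $N$: for $N$ small (bounded in terms of the eventual constant $E$), the period identity alone forbids a run of $|p|$ consecutive entries $a_j>\varepsilon:=E(1/|p|+p\omega-q)$, since such a run would carry more than $N$ units of ``mass''; for $N$ large one uses instead the $1$-Lipschitz bound together with the progression estimate $\sum_{i=0}^{k-1}a_{j+ip}\le 1$ and the Birkhoff estimate $|x_j-x_0-\omega j|\le 1$ to control how the defect accumulates — ruling out that $a$ stays appreciably above its average $p\omega-q$ throughout an interval of length $\gtrsim|p|$ — so that every such window contains an $i_0$ with $a_{i_0}$ (hence $\sum_{j=i_0}^{i_0+r-1}a_j$, by $1$-Lipschitzness) of the required size. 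Merging the estimates over all ranges yields a single constant $E$, depending on $r$ only through the width-$r$ step.

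I expect the main obstacle to be precisely this uniformity in the arithmetic of $p$ and $P$, especially when $P\gg|p|$: a naive average of $a$ over a window of length $|p|$ loses a factor of order $|p|$ arising from the $O(1)$ Birkhoff error per lattice point, so the period identity, the $1$-Lipschitz estimate and the progression bounds must be orchestrated rather than used in isolation. This is exactly the place where the single-crossing property of minimizers (available only for $r=1$) trivialized Mather's proof. A secondary, routine nuisance will be the index bookkeeping and boundary cases ($|p|$ small, $N=0$, $P=1$, and the degenerate case $p\omega=q$).
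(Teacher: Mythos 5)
Your reductions and all five structural facts about $a_j := x_{j+p}-q-x_j$ are correct: $0\le a_j\le 1$, $a$ is $P$-periodic, $\sum_{j=0}^{P-1}a_j = pQ-Pq = N$, $\sum_{i=0}^{k-1}a_{j+ip}<1$ when $k(p\omega-q)<1$, and $|a_{j+1}-a_j|\le 1$. But these facts alone do not imply the conclusion, and this is where the proposal has a genuine gap rather than a routine hole. Consider the model function $a_j = 1-1/|p|$ for $j\in(-|p|,0]\pmod P$ and $a_j=0$ otherwise, with $N=|p|-1$ and $P$ of order $|p|^3$: it satisfies all five of your constraints (the progression constraint because the step-$|p|$ progression meets the length-$|p|$ support at most once before $k$ reaches $\approx 1/\eta$), it is $1$-Lipschitz, yet every window of length $r$ with left endpoint in $(-|p|,0]$ carries mass at least $1-1/|p|$, which for large $|p|$ dwarfs $E(1/|p|+\eta)\approx E/|p|$. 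So no pigeonhole/double-counting over $i_0\in(-|p|,0]$ can close using only (i)--(v); some additional rigidity of Birkhoff sequences is needed to rule out such $a$. Your closing paragraph correctly senses the difficulty ("a naive average $\ldots$ loses a factor of order $|p|$") but the proposed fix (orchestrating the Lipschitz bound with the progression estimate) does not actually break the counterexample above.

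The missing ingredient in the paper is the auxiliary $(p,q)$-periodic object. The paper builds the hull function $\psi$ of $x$ (well-defined and nondecreasing because $x\in\mathcal B_{P,Q}$), sets $y_i:=\psi(i\,q/p)$ to obtain a $(p,q)$-periodic Birkhoff sequence $y$, and chooses $s,t$ with $pt-qs=1$. Two things then happen that your direct argument on $a$ cannot replicate. First, the pigeonhole is carried out over the period $|p|$ of $y$ rather than the period $P$ of $x$: the exact identity $\|\tau_{s,t}y-y\|_{l_1(|p|)}=1$ makes the relevant period-sum equal to $2a$ (with $a=\lceil(|p|+r-1)|p\omega-q|\rceil$), so the average over $|p|$ windows is $2a/|p|\approx 2(1/|p|+|p\omega-q|)$ --- this is exactly where the $1/|p|$ term comes from, and it has no analogue when you average $a_j$ over a $P$-period. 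Second, $x$ is sandwiched between the translates $\tau_{s,t}^{-a}y$ and $\tau_{s,t}^{a}y$ on the segment $[-|p|+1,|p|+r-1]$, using only the monotonicity of $\psi$; this transfers the $y$-estimate to $x$ pointwise. That sandwich encodes precisely the Birkhoff rigidity that your facts (i)--(v) fail to capture. To repair the proposal you would need to introduce $y$ (or something equivalent) and the squeezing step; as written, the plan cannot be completed.
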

\noindent When $x=x^{Q/P}$ is a linear sequence of rotation number $Q/P$, then estimate (\ref{pigeintheorem}) clearly holds for every $i_0\in \Z$, for $E=r$ and  without the term $1/|p|$. 

A more general result than Theorem \ref{regularitytheorem} was proved in \cite{destruction} - in fact, the theorem is even true for $x\in \mathcal{B}_{\omega}$ with $\omega\notin \Q$. Although Theorem \ref{regularitytheorem} is weaker than the result in \cite{destruction}, it is also easier to prove. We therefore provide this proof here.
 \\ \mbox{}\\
{\it Proof (of Theorem \ref{regularitytheorem})}.
For simplicity and readability, we assume $p>0$ and we prove the theorem in steps. We shall write $\omega:=Q/P$.

\indent {\bf Step 1.} Let $x \in \mathcal{B}_{P,Q}$ be a Birkhoff sequence. We define its {\it hull function} $\psi:\R\to\R$  
  by setting 
$$\psi( k \omega + l ):= x_{k}+l \, .$$
We claim that $\psi$ is well-defined on the set $\{k \omega + l \ | k,l\in \Z\}\subset \R$. This follows from our assumptions:  the equality $k \omega  +l =  K \omega +L$ implies that $(k-K)Q + (l-L)P = 0$ and hence, because $P$ and $Q$ are relative prime, that $k-K= -nP$ and $l-L=nQ$. It thus follows that $x_{k}+l = x_{K-nP} + L+ nQ =x_{K}+L$. So $\psi$ is well-defined.


Most importantly, $\psi$ is nondecreasing: when $k \omega + l > K \omega + L$, then by Proposition \ref{numbertheory} it must hold that $\tau_{-k,l}x>\tau_{-K,L}x$  and in particular, $\psi( k \omega + l) = x_{k}+l =(\tau_{-k,l}x)_0 \geq (\tau_{-K,L}x)_0= x_{K}+L = \psi( K \omega + L)$. 

 It is now clear that $\psi$ can be extended to a nondecreasing map $\psi:\R\to\R$ for which $\psi(\xi+1)=\psi(\xi)+1$ for all $\xi\in\R$.

 {\bf Step 2.} Let us use the map $\psi:\R\to\R$ to define a sequence $y$ by
$$y_i :=  \psi\left(i (q/p) \right)\, .$$
Because $\psi$ is nondecreasing and $\psi(\xi+1)=\psi(\xi)+1$, it follows that  
$$y_{i+k}+l = \psi\left(i (q/p)+ k (q/p) + l\right)  \left\{ \begin{array}{lll} \leq y_i  & \mbox{when} & k (q/p)+l<0\ ,  \\ = y_i  & \mbox{when} & k(q/p)+l=0\ , \\  \geq y_i  & \mbox{when} & k(q/p)+l>0\ .  \end{array} \right.$$
Thus, we observe that $y$ is Birkhoff and, because $-p(q/p)+q=0$, actually $y\in \mathcal{B}_{p,q}$.

{\bf Step 3.} 
Because $p$ and $q$ are relative prime, there exist $s, t\in \Z$ so that 
$$pt-qs=1\, .$$ 
For these integers $s,t$ we claim that 
\begin{align}\label{l1periodic}
||\tau_{s,t}y-y||_{l_1(p)}:= \sum_{i=1}^p |(\tau_{s,t}y)_i - y_i| =1  \ \mbox{for any}\ y\in \B_{p,q}\,  .
\end{align} 
To prove our claim, note that for all $k,l\in \Z$ and $i\in \Z$ it holds that 
$$(\tau_{k,l}^py)_i = (\tau_{pk,pl}y)_i =  y_{i-pk} + pl = y_{i} +(pl-qk)\ \mbox{for all} \ y\in \X_{p,q}\, .$$
In particular, $\tau_{s,t}^py=y+1$ and as a consequence, 
$$||\tau_{s,t}^py-y||_{l_1(p)}  =p\, .$$
If $y\in \B_{p,q}$ is also Birkhoff, then this implies that
%
  $$p = ||\tau_{s,t}^p y - y||_{l_1(p)} = \sum_{j=1}^p||\tau_{s,t}^jy-\tau_{s,t}^{j-1}y||_{l_1(p)} =p ||\tau_{s,t}y - y||_{l_1(p)}\, .$$
 This proves (\ref{l1periodic}). In fact, because $\tau_{k,l}^p y -y\in \Z$ for any $k,l\in \Z$, the fact that $\tau_{s,t}^py-y=1$ just means that $\tau_{s,t}y=\min_{k,l}\{\tau_{k,l}y >y\}$ if $y\in \B_{p,q}$.

 Equation (\ref{l1periodic}) in turn implies that for every Birkhoff $y\in \B_{p,q}$ and every integer $a\in \Z$, there must be a $-p < i_0 \leq 0$ so that,
 \begin{align}\label{firstestimate}
 \sum_{j=i_0}^{i_0+r-1} |(y_{j-as} +at) -(y_{j+as}-at)| = \sum_{j=i_0}^{i_0+r-1} |(\tau_{s,t}^a y)_j-(\tau_{s,t}^{-a}y)_j| \leq \frac{2 |a| r}{p} \, .
 \end{align}
 This follows from the pigeonhole principle. Indeed, if it were true that $\sum_{j=i}^{i+r-1} |(\tau_{s,t}^a y)_j-(\tau_{s,t}^{-a}y)_j|  > \frac{2 |a| r}{p}$ for all $i\in\Z$, then it would follow that 
\begin{align}\nonumber
2 |a| r & < \sum_{i=1}^{p} \sum_{j=i}^{i+r-1} |(\tau_{s,t}^a y)_j-(\tau_{s,t}^{-a}y)_j|  = \sum_{j=0}^{r-1} \sum_{i=1}^{p} |(\tau_{s,t}^a y)_{i+j}-(\tau_{s,t}^{-a}y)_{i+j}| = \\ \nonumber 
 & \sum_{j=0}^{r-1} \sum_{i=1}^{p} |(\tau_{s,t}^a y)_{i}-(\tau_{s,t}^{-a}y)_{i}|  = r ||\tau_{s,t}^ay - (\tau_{s,t}^{-a})y||_{l_1(p)} = 2|a|r\, .
\end{align}
The first equality is a re-summation and the second equality holds because $\tau_{s,t}^{\pm a}y$ are $(p,q)$-periodic. This is a contradiction and we conclude that (\ref{firstestimate}) holds for some $-p< i_0\leq 0$.

{\bf Step 4.} Let us try to find an integer $a>0$ such that
$$j(q/p) - a/p \leq j \omega \leq j(q/p) + a/p\ \mbox{for all}\ -p+1\leq j\leq p+r-1\, .$$ 
It is quite straightforward to check that these inequalities hold for any $a\geq (p+r-1) \left| p\omega - q \right|$ and in particular for
$$a:= \left\lceil (p+r-1) \left|p \omega - q \right|\right\rceil\, . $$ 
Here $\lceil \cdot \rceil$ is the smallest greater integer function. 

Now recall the integers $s, t\in \Z$ for which $pt-qs=1$. 
Using these, we can rewrite $j(q/p) - a/p \!=\! (j+as)(q/p)-at$ and $j(q/p) + a/p \!=\! (j-as)(q/p)+at$. This yields that 
$$(j+as)(q/p)-at \leq j \omega \leq (j-as)(q/p)+at 
\  \mbox{for all}\ -p+1\leq j \leq (p+r-1)\, . $$
Applying the nondecreasing map $\psi$, we then obtain that
\begin{align}\label{secondestimate}
  y_{j+as}-at \leq x_j \leq y_{j-as}+at \ \mbox{for}\ -p+1\leq j\leq p+r-1 \, .
\end{align}
This means that the sequence $x$ is squeezed in between the translates $\tau_{s,t}^{-a}y$ and $\tau_{s,t}^ay$ of the $(p,q)$-periodic sequence $y$ on the segment $[-p+1, p+r-1]$.

 {\bf Step 5.} Given $x$ as in the statement of the theorem, let $y\in \B_{p,q}$ be as constructed in step 2. Then the inequalities (\ref{firstestimate}) and (\ref{secondestimate}) hold for $-p+1\leq j\leq p+r-1$, for some $-p<i_0\leq 0$ and for $a=\left\lceil (p+r-1) \left|p \omega - q \right|\right\rceil$. Because for $i_0\leq j \leq i_0+r-1$ it holds that $-p+1\leq j, p+j \leq p+r-1$, it then follows in particular that
\begin{align}\nonumber
 &y_{j+as}-at \leq x_{j} \leq y_{j-as}+at \ \mbox{for}\ i_0\leq j\leq i_0+ r-1 \ \mbox{and}\\  \nonumber
 &y_{p+j+as}-at \leq x_{p+j} \leq y_{p+j-as}+at \ \mbox{for}\ i_0\leq j\leq i_0+ r-1\, . 
 \end{align}
Subtracting these inequalities, we obtain that
$$(y_{p+j+as}-at) - ( y_{j-as}+at )  \leq x_{p+j} - x_{j} \leq (y_{p+j-as}+at)-(y_{j+as}-at )\, ,$$
or, because $y\in \X_{p,q}$,
$$(y_{j+as}-at) - ( y_{j-as}+at )  \leq x_{p+j} -q - x_{j} \leq (y_{j-as}+at)-(y_{j+as}-at )\, .$$
Summing this over $j=i_0,\ldots, i_0+r-1$ and using (\ref{firstestimate}), we then obtain
$$\sum_{j=i_0}^{i_0+r-1} | x_{p+j} -q - x_{j}  | \leq \frac{2ar}{p} = \frac{2r\left\lceil (p+r-1) \left|p \omega - q \right|\right\rceil}{p} \leq E \left( \frac{1}{p} + |p\omega-q|\right)\, . $$
Here, $E = 2r^2$ and we have used that $\lceil x\rceil \leq 1+x$ and that $\frac{p+r-1}{p}\leq r$.

This finishes the proof of Theorem \ref{regularitytheorem}.
\hfill $\square$

\subsection{A comparison of periodic Peierls barriers}\label{proof of tl2}
The following theorem is the main technical result of this paper. It provides a comparison between the Peierls barrier functions of different rational rotation numbers. 
\begin{theorem}\label{mainthm}
For each $L>0$ there is a $C>0$ such that for all rationals $q/p$ and $Q/P$ in lowest terms with $\left|\frac{q}{p}\right|, \left|\frac{Q}{P}\right| \leq L$, 
\begin{align}\label{basicperiodicestimate}
\sup_{\xi\in \R} \left| P_{Q/P}(\xi) - P_{q/p}(\xi) \right| \leq C \left( \frac{1}{|p|} + |p (Q/P) - q| \right) \, .
\end{align}
\end{theorem}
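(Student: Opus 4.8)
The plan is to prove the two one-sided inequalities $P_{q/p}(\xi)\le P_{Q/P}(\xi)+C\eta$ and $P_{Q/P}(\xi)\le P_{q/p}(\xi)+C\eta$, where $\eta:=1/|p|+|p(Q/P)-q|$. First I would reduce to the regime where $\eta$ is small. Conditions {\bf A}--{\bf C} make $P_{q/p}$ and $P_{Q/P}$ uniformly bounded on $\{\,|\cdot|\le L\,\}$ (by periodicity one may take $\xi\in[0,1)$; then clamp a single site of a periodic minimizer to $\xi$ and bound the resulting change in action via Proposition \ref{lipschitz}), so if $\eta$ exceeds a fixed threshold the inequality holds trivially with a large $C$. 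Thus we may assume $\eta$ small, hence $|p|$ large, and --- unless $q/p=Q/P$, where there is nothing to prove --- also $|P|$ large since $|pQ-qP|\ge1$. Assume $p>0$. By Lemma \ref{variant AL} applied to $(P,Q)$ there are adjacent periodic minimizers $z^{-}\le z^{\xi}\le z^{+}$ in $\M_{P,Q}$, all Birkhoff, with $z^{\xi}\in\M_{P,Q}(\xi)$, $P_{Q/P}(\xi)=W_{P,Q}(z^{\xi})-W_{P,Q}(z^{-})$ and $\sum_{j=1}^{P}|z^{+}_{j}-z^{-}_{j}|\le 1$; the analogous data exist for $(p,q)$.

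The engine is a ``cut-and-close'' operation. Given a $(P,Q)$-periodic Birkhoff sequence $x$, Theorem \ref{regularitytheorem} provides an index $i_{0}\in(-p,0]$ at which $x$ is nearly $(p,q)$-periodic, $\sum_{j=i_{0}}^{i_{0}+r-1}|x_{j+p}-q-x_{j}|\le E\eta$; freezing $x$ on $[i_{0},i_{0}+p-1]$ and extending $(p,q)$-periodically yields $\hat x\in\X_{p,q}$ with uniformly bounded consecutive differences, and Proposition \ref{lipschitz}, applied at the single wrap-around seam, gives $|W_{p,q}(\hat x)-W_{[i_{0},i_{0}+p-1]}(x)|\le D'\eta$, while $\hat x_{0}=x_{0}$ because $0\in[i_{0},i_{0}+p-1]$. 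Close-ups of $z^{-}$ then bound $\min_{\X_{p,q}}W_{p,q}$ from above by $W_{[i_{0},i_{0}+p-1]}(z^{-})+D'\eta$. For the constrained minimum one must avoid over-counting the Peierls excess, so the $\xi$-competitor is built exactly as in the proof of Theorem \ref{foliationtheorem}: a $(p,q)$-periodic sequence equal to a close-up of a window of $z^{-}$ except on a block around $0$ where it agrees with $z^{\xi}$, the block endpoints placed where $z^{+}-z^{-}$ is $\ell^{1}$-small (possible by the $\ell^{1}$ bound) so that the two extra seams are also $O(\eta)$-cheap. This yields an upper bound on $\min_{\X_{p,q},\,x_{0}=\xi}W_{p,q}$, and subtracting gives $P_{q/p}(\xi)$ bounded above by a windowed-action difference of $z^{\xi}$ and $z^{-}$ plus $O(\eta)$. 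The symmetric construction --- closing up windows of the $(p,q)$-periodic (constrained) minimizers into $(P,Q)$-periodic competitors --- bounds $P_{Q/P}(\xi)$ from above in the same way, using Theorem \ref{regularitytheorem} with the roles of the two rationals interchanged when $p$ and $P$ are comparable.

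The remaining step, which I expect to be the main obstacle, is the bookkeeping that converts these windowed comparisons into the stated estimate. Since $j\mapsto S(z^{\xi}_{j},\dots)$ and $j\mapsto S(z^{-}_{j},\dots)$ are $P$-periodic with period-sums $W_{P,Q}(z^{\xi})$ and $W_{P,Q}(z^{-})$, a difference over a length-$p$ window equals $\lfloor p/P\rfloor\,P_{Q/P}(\xi)$ plus a partial-period remainder, and the comparison of $\min_{\X_{p,q}}W_{p,q}$ with $W_{[i_{0},i_{0}+p-1]}(z^{-})$ involves the fluctuation of a minimizer's local action about its mean together with the Lipschitz dependence of the minimal action per site on the rotation number (which controls the principal term up to $O(|q-p(Q/P)|)=O(\eta)$). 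The crux is to choose the seam index $i_{0}$ --- simultaneously admissible for $z^{\xi}$ and $z^{-}$ via Theorem \ref{regularitytheorem}, with the window boundary in an $\ell^{1}$-thin part of the gap $[z^{-},z^{+}]$ --- so that these remainders collapse to $O(\eta)$; a counting argument over the $p$ candidate values of $i_{0}$, combined with the Lipschitz and thin-gap estimates, should produce such an $i_{0}$. The case where $p$ and $P$ are wildly disparate is the most delicate, and I expect it to be handled either by interposing intermediate rationals along the Stern--Brocot path from $q/p$ to $Q/P$ (reducing to the Farey-adjacent case $|pQ-qP|=1$, in which $\eta$ is symmetric in the two rationals and the telescoped errors sum to $O(\eta)$), or by exploiting that for $p$ large both $P_{q/p}(\xi)$ and $P_{Q/P}(\xi)$ approximate the cost of a single clamp of a doubly-infinite minimizer of rotation number $\approx q/p$, the two such minimizers being matched again via Theorem \ref{regularitytheorem}. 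Assembling the two one-sided inequalities gives $\sup_{\xi}|P_{Q/P}(\xi)-P_{q/p}(\xi)|\le C\eta$.
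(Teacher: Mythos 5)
Your sketch has the right opening moves (reduce to small $\eta$, invoke Theorem \ref{regularitytheorem} at a good seam index $i_0$, cut a $(P,Q)$-periodic sequence and close it up into a $(p,q)$-periodic competitor via Proposition \ref{lipschitz}, use Lemma \ref{variant AL} for the constrained minimizer), and you correctly flag the remaining "bookkeeping" as the main obstacle. But that bookkeeping is not a technicality you can defer — it is where the proof actually lives, and the routes you suggest for it (Stern--Brocot interpolation, Lipschitz dependence of the minimal action per site on the rotation number, picking $i_0$ in an $\ell^1$-thin part of the gap $[z^-,z^+]$) are not fleshed out and do not close the gap. Concretely: your cut-and-close gives an \emph{upper} bound $\min_{\X_{p,q}}W_{p,q}\le W_{[i_0,i_0+p-1]}(z^-)+O(\eta)$, but to bound $P_{q/p}(\xi)=\min_{x_0=\xi}W_{p,q}-\min W_{p,q}$ from above you need a \emph{lower} bound on $\min W_{p,q}$; the close-up construction does not supply one, and a length-$p$ window of a $P$-periodic quantity leaves a partial-period remainder that has no a priori reason to be $O(\eta)$.

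The paper avoids all of this with an idea your sketch does not contain: it introduces a \emph{third} periodic minimizer $z^-\in\M_{P-p,Q-q}$ of the mediant-like rotation number $(Q-q)/(P-p)$, and splits the full $P$-period $B=[i_0,i_0+P-1]$ into $B_1$ of length $p$ and $B_2$ of length $P-p$, writing
$$P_{Q/P}(\xi)-P_{q/p}(\xi)=\underbrace{W_{B_1}(y(\xi))-W_{B_1}(x(\xi))}_{(1)}+\underbrace{W_{B_2}(y(\xi))-W_{B_2}(z^-)}_{(2)}+\underbrace{W_{B_1}(x^-)+W_{B_2}(z^-)-W_B(y^-)}_{(3)}.$$
Terms $(1)$ and $(2)$ are then the two close-ups you envisioned (a $(p,q)$-close-up and a $(P-p,Q-q)$-close-up of $y(\xi)$), each $\geq -O(\eta)$ by Theorem \ref{regularitytheorem} and Proposition \ref{lipschitz}; and term $(3)$ — precisely the "mean action per site" comparison you were worried about — is shown to be $\geq 0$ outright by a gluing argument: concatenate $n$ copies of $x^-$ on a length-$np$ block and $n$ copies of $z^-$ on a length-$n(P-p)$ block, compare to the $(nP,nQ)$-minimizer $y^-$, and let $n\to\infty$ so the $O(1)$ seam costs vanish. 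The same decomposition with $y^-$ in place of $y(\xi)$ gives the other one-sided bound. The moral: by matching $P=p+(P-p)$ exactly rather than trying to cover $P$ by windows of length $p$, there is no partial-period remainder, no need for an $\ell^1$-thin seam in the $(P,Q)$-gap, no Farey reduction, and no appeal to a (nonexistent, at this level of generality) Lipschitz estimate for the minimal average action. You should also note the paper first reduces to $P>p>0$ by a short symmetry computation, which makes $P-p$ a genuine positive period — another step absent from your sketch that is needed for the $z^-\in\M_{P-p,Q-q}$ device to make sense.
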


\begin{proof} For readability, we shall assume that $p,P>0$. Let us start the proof with a few general remarks. First of all, we note that it is enough to prove the theorem in case 
\begin{align}\label{assumption}
|p(Q/P)-q| < 1\, .
\end{align}
To understand this, let $x^-\in \M_{p,q}\subset \X_{L+2}$ be any periodic minimizer with $|x_0-\xi|\leq 1$ and define $x^{\xi} \in \X_{p,q}$ by $ x^{\xi}_0:=\xi$ and $x^{\xi}_j=x^-_j$ for $j=1, \ldots, p-1$. Then it follows that $x^{\xi}\in \X_{L+3}$ and thus from Proposition \ref{lipschitz} that $|W_{p,q}(x^{\xi})-W_{p,q}(x^-)| \leq Dr$. As a result, 
$$P_{p,q}(\xi) \leq W_{p,q}(x^{\xi}) - W_{p,q}(x^-) \leq Dr\, .$$ 
Similarly, $P_{Q/P}(\xi) \leq Dr$ and in particular, $|P_{Q/P}(\xi)-P_{q/p}(\xi)| \leq 2Dr$. But this means that (\ref{basicperiodicestimate}) will certainly hold if $|p(Q/P)-q|\geq 1$ for any $C\geq 2Dr$. So we will assume that (\ref{assumption}) is satisfied throughout this proof.

In turn, assumption (\ref{assumption}) implies that we may assume  that $p\neq P$. Indeed, if $p=P$ then either $q=Q$ and there is nothing to prove or $q\neq Q$ and $|p(Q/P)-q|\geq 1$. In fact, it suffices to prove the theorem only for $P>p$. Namely, if $P>p$, then  
$$\frac{1}{p} + \left|p\frac{Q}{P}-q\right|  = \frac{1}{p}+\frac{1}{P} +\frac{\left| pQ-qP \right| - 1}{P} \leq \frac{1}{P}+\frac{1}{p} +\frac{\left| pQ-qP \right| - 1}{p}  =
\frac{1}{P}+\left|P\frac{q}{p}-Q\right|$$
so if $P>p$, then (\ref{basicperiodicestimate}) implies the estimate (\ref{basicperiodicestimate}) with $q/p$ and $Q/P$ interchanged. Hence our second assumption, that
$$P>p>0\, .$$
Now we start the proof by fixing $\xi \in \R$ and letting $x^-\in \mathcal{M}_{p,q}$, $y^-\in\mathcal{M}_{P,Q}$ and $z^-\in \M_{P-p, Q-q}$ be periodic minimizers and
$x(\xi)\in \mathcal{M}_{p,q}(\xi)$ and $y(\xi)\in \mathcal{M}_{P,Q}(\xi)$ periodic $\xi$-minimizers. For later use, we remark that $x^-, y^-, x(\xi), y(\xi)\in \X_{L+2}$. 
Moreover, it follows from (\ref{assumption}) that 
\begin{align}\label{consequence} 
\left|\frac{Q}{P} - \frac{Q-q}{P-p}\right| \leq \frac{1}{P-p} \leq 1\, 
\end{align}
and as a consequence $\frac{Q-q}{P-p}\leq L+1$. This implies that $z^-\in\X_{L+3}$.
\\ \mbox{} \\
\noindent {\bf An estimate from below:} We will first estimate $P_{Q/P}(\xi) - P_{q/p}(\xi)$ from below. To do so, recall from Theorem \ref{regularitytheorem} that there is a $-p< i_0 \leq 0$ with the property that 
\begin{align}\label{periodicestimate}
\sum_{j=i_0}^{i_0+r-1} |y(\xi)_{j+p} - q - y(\xi)_{j}| \leq E \left(\frac{1}{p} + |p(Q/P) - q|\right)\, .
\end{align}
Using this $i_0$, we define
$$B:=[i_0, i_0+P-1]=B_{1}\cup B_{2} \ \mbox{with}\ B_{1}:=[i_0,i_0+p-1] \ \mbox{and}\ B_{2}:=[i_0+p, i_0+P-1]$$
and according to this decomposition, we write
\begin{align}&
P_{Q/P}(\xi)-P_{q/p}(\xi) = W_{B}(y(\xi)) - W_{B}(y^-) - W_{B_1}(x(\xi)) + W_{B_1}(x^-) = \nonumber \\ \nonumber
& \underbrace{W_{B_1}(y(\xi)) - W_{B_1}(x(\xi))}_{\bf (1)} +\underbrace{W_{B_2}(y(\xi)) -W_{B_2}(z^-)}_{\bf (2)} + \underbrace{W_{B_1}(x^-)  + W_{B_2}(z^-) - W_{B}(y^-)}_{\bf (3)}  \, .
\end{align}
We will estimate terms $({\bf 1}), ({\bf 2})$ and $({\bf 3})$ separately from below. 

To estimate term ({\bf 1}), we use that $y(\xi)$ is nearly $(p,q)$-periodic. This means the following: let us denote by $\tilde y(\xi)\in \X_{p,q}$ the $(p,q)$-periodic approximation of $y(\xi)$ defined by
$$\tilde y(\xi)_j: = y(\xi)_j \ \mbox{for} \ j\in B_1=[i_0, i_0+p-1]\, . $$
We claim that $\tilde y(\xi)_j$ is very close to $y(\xi)_j$ for all $j\in [i_0, i_0+p-1+r]$. To prove this, let us remark first of all that $\tilde y(\xi)_j= y(\xi)_{j-p}+q$ for all $j\in [i_0+p, i_0+2p-1]$. It therefore follows from (\ref{periodicestimate}) that 
\begin{align}\nonumber
&\sum_{j= i_0}^{i_0+p-1+\min\{p,r\}} \!\!\! | y(\xi)_j - \tilde y(\xi)_j | = \sum_{j= i_0+p}^{i_0+p-1+\min\{p,r\}} \!\!\! | y(\xi)_j - \tilde y(\xi)_j | =  \nonumber \\ \nonumber & \sum_{j= i_0+p}^{i_0+p-1+\min\{p,r\}} \!\!\! | y(\xi)_j -(y(\xi)_{j-p}+q) | =
 \sum_{j=i_0}^{i_0-1+\min\{p,r\}} \!\!\! | y(\xi)_{j+p} - q - y(\xi)_j | \\ \nonumber &\leq E \left(\frac{1}{p} + |p(Q/P) - q|\right)\, . 
\end{align}
When $p\geq r$, this proves our claim that $\tilde y(\xi)_j$ is very close to $y(\xi)_j$ for all $j\in [i_0, i_0+p-1+r]$. Otherwise, in the (exceptional) case that $p<r$, the estimate is similar but more delicate. In fact, one can estimate for any integer $m\geq 1$,
\begin{align}\nonumber
&\sum_{j= i_0+mp}^{i_0+p-1+\min\{mp,r\}} \!\!\! | y(\xi)_j - \tilde y(\xi)_j | = \sum_{j=i_0+mp}^{i_0+p-1+\min\{mp,r\}} \!\!\! | y(\xi)_{j} - mq - y(\xi)_{j-mp} | \nonumber \leq \\
\nonumber &  \sum_{j=i_0+mp}^{i_0+p-1+\min\{mp,r\}} \!\!\! | y(\xi)_{j} -q -y(\xi)_{j-p}|  + \ldots +  | y(\xi)_{j-(m-1)p}  -q - y(\xi)_{j-mp} | \leq \nonumber \\ \nonumber
&  \sum_{j=i_0+p}^{i_0+p-1+\min\{mp,r\}} \!\!\! | y(\xi)_{j} -q -y(\xi)_{j-p}| = \sum_{j=i_0}^{i_0-1+\min\{mp,r\}} \!\!\! | y(\xi)_{j+p} -q -y(\xi)_j|  \\ \nonumber & \leq
E \left(\frac{1}{p} + |p(Q/P) - q|\right)\, . 
\end{align}
Summing these inequalities for $m=1, \ldots, r$, they yield that 
\begin{align}\label{anotherestimate}
&
\sum_{j= i_0}^{i_0+p-1+r} \!\!\! | y(\xi)_j - \tilde y(\xi)_j |  \leq rE \left(\frac{1}{p} + |p(Q/P) - q|\right)\, . 
\end{align}

\noindent We also remark that $|\tilde y(\xi)_{i_0-1} - \tilde y(\xi)_{i_0}| = | y(\xi)_{i_0+p-1}-q - y(\xi)_{i_0}| \leq  | y(\xi)_{i_0+p-1} -  y(\xi)_{i_0+p}| + | y(\xi)_{i_0+p}-q -  y(\xi)_{i_0}| \leq (L+2)+E(1/p+|p(Q/P)-q|) \leq L+2+2E$ so $\tilde y(\xi)\in \X_K$ for $K:=L+2+2E$. Thus, Proposition \ref{lipschitz} and (\ref{anotherestimate}) together give that
$$|W_{ B_{1}}(\tilde y(\xi))-W_{ B_{1}}(y(\xi))|\leq rDE \left( \frac{1}{p} + |p(Q/P) -q|\right) \, .$$ 
Because, by construction, $\tilde y(\xi)_0=\xi$, and because $x(\xi)$ minimizes $W_{B_1}=W_{p,q}$ over $\X_{p,q}$ subject to $\xi$, it holds that $W_{B_1}(\tilde y(\xi)) - W_{B_1}(x(\xi))\geq 0$, so this proves that 
$$({\bf 1}) \geq - rDE\left( \frac{1}{p} + |p(Q/P)-q| \right)\, .$$
In order to estimate term $({\bf 2})$, let us analyse how $y(\xi)$ behaves on $B_{2}$. We claim that its restriction to $[i_0+p, i_0+P-1+r]$ is almost $(P-p, Q-q)$-periodic. Indeed, this follows from (\ref{periodicestimate}) and from the fact that $y(\xi)$ is $(P,Q)$-periodic:
\begin{equation}\begin{aligned} 
\sum_{j= i_0+p}^{i_0+p+r-1} |y(\xi)_{j+(P-p)}-(Q-q) - y(\xi)_j| = \sum_{j= i_0}^{i_0+r-1} |y(\xi)_{j+P}-(Q-q)-y(\xi)_{j+p}| = \nonumber \\ \nonumber
 \sum_{j= i_0}^{i_0+r-1} |y(\xi)_{j}+q-y(\xi)_{j+p}| =  \sum_{j= i_0}^{i_0+r-1} |y(\xi)_{j+p}-q-y(\xi)_{j}|\leq E\left(\frac{1}{p} + |p(Q/P) - q|\right) \, .\end{aligned}
\end{equation} 
With this in mind, the analysis of term $({\bf 2})$ proceeds as the analysis of term ${\bf (1)}$: this time we let $\hat y(\xi)\in \X_{P-p, Q-q}$ be a $(P-p,Q-q)$-periodic approximation of $y(\xi)$, that is 
$$\hat y(\xi)_i: = y(\xi)_i\ \mbox{for} \ i\in B_2= [i_0+p, i_0+P-1]\, .$$ 
As above, it then holds that 
\begin{align}\nonumber
&
\sum_{j= i_0+p}^{i_0+P-1+r} \!\!\! | y(\xi)_j - \hat y(\xi)_j |  \leq rE \left(\frac{1}{p} + |p(Q/P) - q|\right)\, . 
\end{align}
and that 
$\hat y(\xi)\in \X_{L+2+2E}$.  Therefore,
$$\left| W_{B_{2}}(\hat y(\xi)) - W_{B_{2}}(y(\xi)) \right|  \leq rDE \left( \frac{1}{p} + |p(Q/P) -q|\right)\, .$$
Because $z^-\in \X_{P-p,Q-q}$ is a $(P-p,Q-q)$-periodic minimizer, this yields that
$$({\bf 2}) \geq - rDE \left( \frac{1}{p} + |p(Q/P)-q| \right)\, .$$
Finally, we estimate term $({\bf 3})$ by defining, for any $n\in \N$, a sequence $Y\in \X_{nP,nQ}$ by 
$$Y_i:=\left\{ \begin{array}{ll} x^-_i & \mbox{for}\ i\in [i_0, i_0+np-1] \\ z_i^- & \mbox{for}\ i\in [i_0+np, i_0+nP-1]\end{array} \right.$$
 By translation-invariance it may of course be assumed that $|x^-_{i_0+np} - z^-_{i_0+np}|\leq 1$. Recalling that $x^-\in \X_{L+2}$ and $z^-\in \X_{L+3}$, it then follows that $|Y_{i_0+np-1}-Y_{i_0+np}| = |x^-_{i_0+np-1}-z^-_{np}| \leq L+3$ but also that $|Y_{i_0-1}-Y_{i_0}| = |z^-_{i_0+nP-1} - nQ -x^-_{i_0}| = |z^-_{i_0+nP-1-n(P-p)}+n(Q-q)-nQ - x_{i_0+np}^- +nq| = |z^-_{i_0+np-1} - x^-_{i_0+np}| \leq L+4$. The conclusion is that $x^-, z^-, Y\in \X_K$ for $K := L+4$. As a consequence, 
\begin{align}
&\sum_{j\in [i_0,i_0+np-1+r]}\!\!\! |x^-_j-Y_j| = \!\!\! \sum_{j\in [i_0+np,i_0+np-1+r]}\!\!\! |x^-_j-Y_j|  \leq 2K + \ldots + 2rK \leq r(r+1)K \  \mbox{and} \nonumber \\ \nonumber &\sum_{j\in [i_0+np,i_0+nP-1+r]}\!\!\!|z^-_j-Y_j| =\!\!\!\sum_{j\in [i_0+nP,i_0+nP-1+r]}\!\!\! |z^-_j-Y_j|   \leq 2K + \ldots + 2rK \leq r(r+1)K \, .
\end{align}
 It therefore follows from Proposition \ref{lipschitz} that
\begin{align}
& |W_{[i_0, i_0+np-1]}(x^-)-W_{[i_0, i_0+np-1]}(Y)| \leq r(r+1)DK\ \ \mbox{and} \nonumber \\ \nonumber & |W_{[i_0+np, i_0+nP-1]}(z^-)-W_{[i_0+np, i_0+nP-1]}(Y)| \leq r(r+1)DK\, 
\end{align}
and as a result, because $y^-\in \M_{P,Q}=\M_{nP,nQ}$ is a periodic minimizer,
\begin{align}
& n \left( W_{[p,q]}(x^-) + W_{P-p,Q-q}(z^-) - W_{P,Q}(y^-) \right) = \nonumber \\ \nonumber & W_{[i_0,i_0+np-1]}(x^-) + W_{[i_0+np, i_0+nP-1]}(z^-) - W_{nP,nQ}(y^-)  
\geq \nonumber \\ \nonumber & W_{[i_0,i_0+np-1]}(Y)+W_{[i_0+np, i_0+nP-1]}(Y) - W_{nP,nQ}(y^-) - 2r(r+1)DK  = \nonumber \\ \nonumber & W_{nP,nQ}(Y) - W_{nP,nQ}(y^-) - 2r(r+1)DK  \geq -2r(r+1)DK  
 \, .
\end{align}
But this is true for all $n\in \N$ and we conclude that
$$({\bf 3}) \geq 0 \, .$$
This completes the proof that $$P_{Q/P}(\xi) - P_{q/p}(\xi) \geq -2r DE \left( \frac{1}{p} + |p(Q/P) -q|\right)\, .$$
\\ \mbox{} \\
\noindent {\bf An estimate from above:} To estimate $P_{Q/P}(\xi)-P_{q/p}(\xi)$ from above, we follow a similar procedure, but this time we choose $-p < i_0 \leq 0$ in such a way that 
\begin{align}
\sum_{j=i_0}^{i_0+r-1} |y^-_{j+p} - q - y^-_{j}| \leq E \left(\frac{1}{p} + |p(Q/P) - q|\right)
\end{align}
and we write
\begin{align}&
-\left(P_{Q/P}(\xi) -  P_{q/p}(\xi) \right)=  W_{B}(y^-) - W_{B}(y(\xi)) + W_{B_1}(x(\xi)) -W_{B_1}(x^-) = \nonumber \\ \nonumber
& \underbrace{W_{B_1}(y^-) - W_{B_1}(x^-)}_{\bf (1')} +\underbrace{W_{B_2}(y^-) -W_{B_2}(z^-)}_{\bf (2')} + \underbrace{W_{B_1}(x(\xi))  + W_{B_2}(z^-) - W_{B}(y(\xi))}_{\bf (3')}  \, .
\end{align}
As above, one finds that
$$({\bf 1'}), ({\bf 2'}) \geq -rDE\left( \frac{1}{p} + |p(Q/P)-q|\right)$$ 
respectively because $x^-\in \M_{p,q}$ is a periodic minimizer and $y^-$ is almost $(p,q)$-periodic and because $z^-\in \M_{P-p, Q-q}$ is a periodic minimizer and $y^-$ is almost $(P-p, Q-q)$-periodic. It also follows that $|({\bf 3'})|\geq 0$ because $y(\xi)\in \M_{P,Q}(\xi)$ is a periodic $\xi$-minimizer. 

This concludes the proof of (\ref{basicperiodicestimate}) for $C:=2rDE$, where $D=(r+1) ||S||_{C^1(\X_K)}$, $K=L+2+2E$ and $E=2r^2$.
\end{proof}

\section{Continuity of the Peierls barrier}\label{modulussection}
In this section, we will use the estimate of Theorem \ref{mainthm} to show, for $\omega\in \R\backslash \Q$, that the limit $P_{\omega}(\xi)=\lim_{q/p\to\omega} P_{q/p}(\xi)$
is well-defined. We will also prove Theorem \ref{th1} of the introduction. 

We simply follow the argument given in \cite{Matherpeierls}, which goes as follows. First of all, we will say that a rational number $q/p$ is a {\it best rational approximation} of an irrational number $\omega$ if 
$$|p\omega-q|\leq |p'\omega-q'| \ \mbox{for any} \ 0\leq |p'| \leq |p|\, .$$ 
For irrational $\omega$, there clearly exist infinitely many such best rational approximations. Moreover, the following result is well-known:
\begin{proposition}
Let $q/p$ and $\mathfrak{q}/\mathfrak{p}$ be {\rm successive} best rational approximations of $\omega\in \R\backslash\Q$. This means that $|\mathfrak{p}|>|p|$ and $|\mathfrak{p}\omega-\mathfrak{q}| < |p\omega - q| \leq |p'\omega - q'|$ for all $0\leq |p'|<|\mathfrak{p}|$. Then  
$$\left|p\omega- q\right|<\frac{1}{|\mathfrak{p}|}<\frac{1}{|p|}\, .$$ 
\end{proposition}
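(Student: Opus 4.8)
The plan is to prove this by a Dirichlet/pigeonhole argument that uses only the defining inequality of a best approximation in the strong range $|p'|<|\mathfrak p|$. One could instead invoke the classical fact that the successive best rational approximations of an irrational are its consecutive continued-fraction convergents and quote $|p\mathfrak q-q\mathfrak p|=1$; the argument I would give avoids this machinery.

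First I would dispose of the easy inequality $1/|\mathfrak p|<1/|p|$, which is immediate from $|\mathfrak p|>|p|$. For the inequality $|p\omega-q|<1/|\mathfrak p|$ I would normalise: replacing $(p,q)$ and $(\mathfrak p,\mathfrak q)$ by their negatives if necessary, which affects neither $|p\omega-q|$ nor the hypotheses, I may assume $p,\mathfrak p>0$; since $p\ge 1$ this forces $\mathfrak p\ge 2$. Then I would run the pigeonhole on the $\mathfrak p$ points $\{n\omega\}\in\R/\Z$ for $n=0,1,\dots,\mathfrak p-1$. They are pairwise distinct because $\omega$ is irrational, so they cut the circle $\R/\Z$ into $\mathfrak p$ arcs of total length $1$. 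If every arc had length $\ge 1/\mathfrak p$ then all arcs would equal $1/\mathfrak p$, forcing $\{0,\{\omega\},\dots,\{(\mathfrak p-1)\omega\}\}=\{0,1/\mathfrak p,\dots,(\mathfrak p-1)/\mathfrak p\}$ and hence $\omega\in\Q$, a contradiction. Thus some arc has length strictly less than $1/\mathfrak p$; its endpoints are two adjacent points $\{i\omega\}$ and $\{j\omega\}$ with $i\ne j$ in $\{0,\dots,\mathfrak p-1\}$, and the arc length equals the circle distance $\|(i-j)\omega\|$, where $\|\cdot\|$ denotes the distance to the nearest integer. Hence $m:=|i-j|$ satisfies $1\le m\le\mathfrak p-1$ and $\|m\omega\|<1/\mathfrak p$.

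Finally I would choose $q_m\in\Z$ with $|m\omega-q_m|=\|m\omega\|$ and apply the best-approximation inequality $|p\omega-q|\le|p'\omega-q'|$ with $(p',q')=(m,q_m)$, which is legitimate because $1\le m<\mathfrak p=|\mathfrak p|$; this gives $|p\omega-q|\le\|m\omega\|<1/|\mathfrak p|<1/|p|$, as claimed. The argument is short and there is no serious obstacle; the two points to watch are (i) excluding the degenerate equally-spaced configuration, which is exactly where the irrationality of $\omega$ is used, and (ii) checking that the pigeonhole index $m$ lands in the range $|p'|<|\mathfrak p|$ for which the best-approximation inequality is actually assumed — this is precisely why the definition of \emph{successive} uses the exponent range $|p'|<|\mathfrak p|$ rather than $|p'|\le|p|$.
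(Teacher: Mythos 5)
Your proof is correct, but it takes a genuinely different route from the paper's. The paper applies Minkowski's convex body theorem to the open parallelogram $\{(x,y)\in\R^2 : -|\mathfrak p|<x<|\mathfrak p|,\ |\omega x-y|<(1+\varepsilon)/|\mathfrak p|\}$, whose area is $4(1+\varepsilon)$; this produces a nonzero lattice point with $|p'|<|\mathfrak p|$ and $|p'\omega-q'|<(1+\varepsilon)/|\mathfrak p|$, and the best-approximation property then forces $(p,q)$ itself to lie in the parallelogram, giving $|p\omega-q|\le 1/|\mathfrak p|$ after $\varepsilon\to 0$, with strictness finally supplied by irrationality. You instead run a Dirichlet-style pigeonhole on the circle $\R/\Z$ with the $\mathfrak p$ points $\{n\omega\}$, $0\le n\le \mathfrak p-1$, to produce $m$ with $1\le m<\mathfrak p$ and $\|m\omega\|<1/\mathfrak p$, and then feed $p'=m$ into the same best-approximation inequality. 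Both routes are classical and both consume the irrationality of $\omega$, but at different points: the paper uses it at the very end to upgrade $\le$ to $<$, whereas you use it mid-proof to rule out the equally-spaced configuration in which every arc would have length exactly $1/\mathfrak p$. Your version is marginally more elementary (no convex-body theorem), and you are right to note the small but essential check that the shortest arc has length $<1/\mathfrak p\le 1/2$, so that arc length and circle distance $\|(i-j)\omega\|$ coincide. The paper's version is more easily adapted to higher-dimensional simultaneous approximation; for the one-dimensional statement at hand the two are of comparable length and rigor.
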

\begin{proof}
For any $\varepsilon>0$, the parallelogram   
$$\{(x,y)\in \R^2\, |\, -|\mathfrak{p}| < x<|\mathfrak{p}| \ \mbox{and} \ |\omega x-y| < (1+\varepsilon)/|\mathfrak{p}|\}$$
has volume equal to $4(1+\varepsilon)$ and hence by Minkowski's theorem must contain at least one nontrivial integer point. But for $-|\mathfrak{p}|<p'<|\mathfrak{p}|$, the quantity $|p'\omega-q'|$ is minimized at $p'=p$ and $q'=q$. Hence the parallelogram must certainly contain the point $(p,q)$. This means that $|p\omega-q|<(1+\varepsilon)/|\mathfrak{p}|$. The latter is true for all $\varepsilon>0$, so $|p\omega-q|\leq 1/|\mathfrak{p}|$ and because $\omega$ is irrational, actually $|p\omega-q|<1/|\mathfrak{p}|<1/|p|$.
\end{proof}
 
\noindent We will use these best rational approximations to prove the following result:
\begin{corollary}\label{limitcorr}
For $\omega\in\R\backslash \Q$ the limit $P_{\omega}(\xi)=\lim_{Q/P\to\omega} P_{Q/P}(\xi)$ exists and satisfies
$$|P_{\omega}(\xi)-P_{q/p}(\xi)|\leq C \left( \frac{1}{|p|} + |p\omega - q|\right)\ \mbox{for all}\ q/p\ \mbox{with} \ |\omega-q/p| \leq 1\, .$$
\end{corollary}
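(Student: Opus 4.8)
The plan is to derive Corollary \ref{limitcorr} from Theorem \ref{mainthm} by the standard Cauchy-sequence argument, using the proposition on successive best rational approximations to control the error terms. First I would fix $\xi\in\R$ and an irrational $\omega$ with, say, $|\omega|\leq L$, and let $q_1/p_1, q_2/p_2, \ldots$ denote the (infinitely many) successive best rational approximations of $\omega$ written in lowest terms, so that $|p_n|\to\infty$ and, by the proposition, $|p_n\omega - q_n| < 1/|p_{n+1}| < 1/|p_n|$. Applying Theorem \ref{mainthm} with the pair $q_n/p_n$ and $q_{n+1}/p_{n+1}$ (both have modulus $\leq L+1$ eventually, and one can enlarge $L$ at the outset), and noting that $|p_n(q_{n+1}/p_{n+1}) - q_n| \leq |p_n\omega - q_n| + |p_n||\omega - q_{n+1}/p_{n+1}| \leq 1/|p_{n+1}| + |p_n|\,|p_{n+1}\omega-q_{n+1}|/|p_{n+1}| \leq 2/|p_{n+1}|$, I get
\begin{align}\nonumber
\sup_{\xi\in\R}\left| P_{q_{n+1}/p_{n+1}}(\xi) - P_{q_n/p_n}(\xi) \right| \leq C\left( \frac{1}{|p_n|} + \frac{2}{|p_{n+1}|} \right) \leq \frac{3C}{|p_n|}\, .
\end{align}
Since $|p_n|$ grows at least geometrically (successive best approximants satisfy $|p_{n+1}|\geq |p_n|+|p_{n-1}|$, hence $|p_n|\gtrsim \phi^n$), the tail sums $\sum_{n\geq N} 3C/|p_n|$ go to zero, so $(P_{q_n/p_n}(\xi))_n$ is a Cauchy sequence of real numbers, uniformly in $\xi$; call its limit $\bar P_\omega(\xi)$.

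Next I would upgrade this to the full limit $\lim_{Q/P\to\omega} P_{Q/P}(\xi) = \bar P_\omega(\xi)$. Given any rational $Q/P$ in lowest terms that is close to $\omega$, pick the best approximant $q_n/p_n$ with $|p_n|$ the largest with $|p_n| \leq |P|$; then $|p_n| \to \infty$ as $Q/P\to\omega$ (because a rational very close to $\omega$ must have large denominator), and by the defining property of best approximation $|p_n\omega - q_n| \leq |P\omega - Q|$, which is small. Theorem \ref{mainthm} applied to the pair $q_n/p_n$ and $Q/P$ gives $\sup_\xi|P_{Q/P}(\xi) - P_{q_n/p_n}(\xi)| \leq C(1/|p_n| + |p_n(Q/P) - q_n|)$, and the second term is bounded by $|p_n\omega-q_n| + |p_n|\,|Q/P-\omega| \leq |P\omega-Q| + |P\omega - Q|$ since $|p_n|\leq|P|$ — all terms tending to $0$. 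Combined with $|P_{q_n/p_n}(\xi) - \bar P_\omega(\xi)|\to 0$, this shows $P_{Q/P}(\xi)\to\bar P_\omega(\xi)$ uniformly in $\xi$, so the limit in Definition \ref{qppeierlsdef2} is well-defined and equals $\bar P_\omega(\xi)$.

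Finally, for the quantitative estimate in the statement: given any $q/p$ with $|\omega - q/p|\leq 1$, I take limits in Theorem \ref{mainthm} along a sequence $Q_k/P_k\to\omega$ of best approximants. For fixed $k$, $\sup_\xi|P_{Q_k/P_k}(\xi) - P_{q/p}(\xi)| \leq C(1/|p| + |p(Q_k/P_k) - q|)$, and letting $k\to\infty$ the left side tends to $\sup_\xi|P_\omega(\xi) - P_{q/p}(\xi)|$ (using uniform convergence) while $|p(Q_k/P_k) - q| \to |p\omega - q|$; this yields exactly $|P_\omega(\xi) - P_{q/p}(\xi)| \leq C(1/|p| + |p\omega-q|)$ uniformly in $\xi$, completing the proof. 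The main obstacle I anticipate is purely bookkeeping: making sure the rotation numbers of all rationals involved stay within the bound required by Theorem \ref{mainthm} (handled by fixing a slightly larger $L$ from the start and discarding finitely many early approximants), and carefully estimating the cross term $|p(Q/P) - q|$ in terms of $|p\omega - q|$ and $|Q/P - \omega|$ using that $|p| \leq |P|$ — no genuinely hard analysis is needed once Theorem \ref{mainthm} is in hand.
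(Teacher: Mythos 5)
Your Cauchy argument along best rational approximants and your passage to the limit in Theorem \ref{mainthm} to obtain the quantitative bound are both fine. The gap is in the middle step, where you try to upgrade convergence along best approximants to the full limit $\lim_{Q/P\to\omega}P_{Q/P}(\xi)$. You pick $q_n/p_n$ to be the best approximant with $|p_n|$ largest among those $\leq |P|$, bound $|p_n(Q/P)-q_n|\leq 2|P\omega-Q|$, and claim ``all terms tending to $0$''. This claim is false: $|P\omega-Q|=|P|\,|Q/P-\omega|$ need not tend to zero as $Q/P\to\omega$. For instance, for each large prime $P$, take $Q$ to be the integer second-nearest to $P\omega$; then $Q/P$ is in lowest terms, $|Q/P-\omega|\leq 1/P\to 0$, but $|P\omega-Q|\geq 1/2$. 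Along such a sequence the bound $C(1/|p_n|+|p_n(Q/P)-q_n|)$ coming from Theorem \ref{mainthm} does not go to zero, so your argument does not show $P_{Q/P}(\xi)\to\bar P_\omega(\xi)$ and hence does not establish that the limit in Definition \ref{qppeierlsdef2} is independent of the approximating sequence.

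The paper sidesteps this by reversing the quantifiers: rather than letting the best approximant depend on the denominator $P$ of the test rational, fix one best approximant $q/p$ with $|p|$ large, so $|\omega-q/p|<1/|p|^2$. Then for \emph{every} $Q/P$ with $|Q/P-\omega|<1/|p|^2$ the triangle inequality gives $|Q/P-q/p|<2/|p|^2$, hence $|p(Q/P)-q|=|p|\,|Q/P-q/p|<2/|p|$, hence $|P_{Q/P}(\xi)-P_{q/p}(\xi)|\leq 3C/|p|$. Since $|p|$ can be made arbitrarily large and does not depend on $Q/P$, this yields the Cauchy criterion for the net directly, with no geometric-growth lemma for convergents, no summed series, and no occurrence of the unbounded quantity $|P\omega-Q|$. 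Your argument is repaired by exactly this move: having constructed $\bar P_\omega(\xi)$ as the limit along best approximants, given $\varepsilon>0$ pick $m$ with $3C/|p_m|<\varepsilon/2$ and $|P_{q_m/p_m}(\xi)-\bar P_\omega(\xi)|<\varepsilon/2$, and conclude $|P_{Q/P}(\xi)-\bar P_\omega(\xi)|<\varepsilon$ whenever $|Q/P-\omega|<1/|p_m|^2$.
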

\begin{proof}
Let $\omega$ be irrational and let $q/p$ be a best rational approximation of $\omega$. This implies that $|\omega - q/p| < 1/|p|^2$. Therefore, when $|Q/P-\omega| < 1/|p|^2$, then $|Q/P-q/p| \leq |Q/P - \omega| + |\omega - q/p| < 2/|p|^2$ and hence it follows from Theorem \ref{mainthm} that 
$$|P_{Q/P}(\xi)-P_{q/p}(\xi)| \leq C \left(\frac{1}{|p|}+|p(Q/P)-q|\right) \leq 3C/|p|\, .$$ 
But of course $|p|$ can be chosen arbitrarily large, so this shows that the limit $P_{\omega}(\xi)=\lim_{Q/P\to\omega} P_{Q/P}(\xi)$ exists. 

Obviously, for $\omega\notin\Q$ and $q/p$ arbitrary,
\begin{align}
|P_{\omega}(\xi)-&P_{q/p}(\xi)|\leq \lim_{Q/P\to\omega} |P_{\omega}(\xi)-P_{Q/P}(\xi)| + |P_{Q/P}(\xi)-P_{p/q}(\xi)| \leq  \nonumber \\ \nonumber & \lim_{Q/P\to\omega} C\left( \frac{1}{|p|}+|p(Q/P) -q| \right) =  C \left( \frac{1}{|p|} + |p\omega - q|\right)\, .
\end{align}
\end{proof}
\noindent It follows from Corollary \ref{limitcorr} that $P_{\omega}(\xi)$ depends continuously on $\omega$ at irrational $\omega$. 
Indeed, when $\omega$ is irrational, $q/p$ is one of its best rational approximations and $\Omega$ is another rotation number with $|\Omega-\omega|\leq 1/|p|^2$, then $|p\Omega - q| \leq |p\omega -q| + |p(\Omega-\omega)| \leq 2/|p|$ and therefore
\begin{align}
|P_{\omega}(\xi)& -P_{\Omega}(\xi)|\leq |P_{\omega}(\xi)-P_{q/p}(\xi)| + |P_{\Omega}(\xi)-P_{q/p}(\xi)| \leq \nonumber \\ \nonumber 
 C& (1/|p|+|p\omega-q|) + C(1/|p|+|p\Omega-q|) \leq 5C/|p|\, .
\end{align}
This proves:
\begin{corollary}
The map $\omega\mapsto P_{\omega}(\xi)$ is continuous at irrational $\omega$ (uniformly in $\xi$).
\end{corollary}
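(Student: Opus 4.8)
The plan is to combine Corollary \ref{limitcorr} with a well-chosen best rational approximation of $\omega$ and a single application of the triangle inequality; no new idea beyond Theorem \ref{mainthm} is needed. Fix an irrational $\omega$ and $\varepsilon>0$. Since $\omega$ is fixed, every rotation number $\Omega$ sufficiently close to $\omega$ satisfies $|\Omega|\leq L$ for some $L$ depending only on $\omega$, so we may let $C=C(L)$ be the constant provided by Theorem \ref{mainthm} (hence also the one appearing in Corollary \ref{limitcorr}).

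First I would choose a best rational approximation $q/p$ of $\omega$, in lowest terms, with $|p|$ so large that $5C/|p|<\varepsilon$; infinitely many such $q/p$ exist with arbitrarily large denominator. A best rational approximation obeys $|\omega-q/p|<1/|p|^2$, i.e. $|p\omega-q|<1/|p|$, so Corollary \ref{limitcorr} gives, uniformly in $\xi$,
\[ |P_\omega(\xi)-P_{q/p}(\xi)| \leq C\!\left(\frac{1}{|p|}+|p\omega-q|\right) \leq \frac{2C}{|p|}. \]
Next, for any $\Omega$ with $|\Omega-\omega|\leq 1/|p|^2$ I would bound $|P_\Omega(\xi)-P_{q/p}(\xi)|$ from above. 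Here there is a small case split: if $\Omega$ is rational, apply Theorem \ref{mainthm} to the pair $q/p$, $\Omega$ directly; if $\Omega$ is irrational, apply Corollary \ref{limitcorr} with $\omega$ replaced by $\Omega$ (legitimate, since $|\Omega-q/p|\leq|\Omega-\omega|+|\omega-q/p|<1$). In either case one gets $|P_\Omega(\xi)-P_{q/p}(\xi)|\leq C(1/|p|+|p\Omega-q|)$ uniformly in $\xi$, and since $|p\Omega-q|\leq|p\omega-q|+|p|\,|\Omega-\omega|\leq 1/|p|+1/|p|=2/|p|$, this is at most $3C/|p|$. Adding the two bounds through the triangle inequality yields $|P_\omega(\xi)-P_\Omega(\xi)|\leq 5C/|p|<\varepsilon$, uniformly in $\xi$, which is exactly uniform continuity at $\omega$.

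I do not anticipate a genuine obstacle: the entire content is packed into Theorem \ref{mainthm} and Corollary \ref{limitcorr}. The only points demanding a little care are the rational/irrational dichotomy for the nearby rotation number $\Omega$ (so that one invokes the right one of the two estimates) and the remark that $q/p$ and $\Omega$ both lie in the fixed range $[-L,L]$ to which the constant $C$ applies — both of which are immediate once $\omega$ is fixed and $\Omega$ is taken close to it.
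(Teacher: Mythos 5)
Your proof is correct and follows essentially the same route as the paper: pick a best rational approximation $q/p$ of $\omega$ with $|p|$ large, insert $P_{q/p}$ by the triangle inequality, and bound each side by Corollary \ref{limitcorr} (or Theorem \ref{mainthm} when $\Omega$ is rational), arriving at the same $5C/|p|$ estimate. The only cosmetic difference is that you make the rational/irrational dichotomy for $\Omega$ explicit where the paper leaves it implicit.
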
 \noindent
More quantitative continuity results may be obtained under certain conditions on the irrationality of $\omega$. For example, we recall that $\omega$ is called {\it Diophantine} when there are constants $\gamma>0$ and $\tau\geq 1$ such that 
$$|p\omega -q| \geq \frac{\gamma}{|p|^{\tau}} \ \mbox{for all integers}\ p\neq 0\ \mbox{and}\ q \, .$$
\begin{corollary}
When $\omega$ is Diophantine, then uniformly in $\xi$ the map $\omega\mapsto P_{\omega}(\xi)$ is locally H\"older continuous at $\omega$ with H\"older exponent $1/2\tau$:
$$\left| P_{\omega}(\xi) - P_{\Omega}(\xi) \right|  < (5C/ \gamma^{1/\tau}) \, |\Omega-\omega|^{1/2\tau}\ \mbox{whenever}\ |\Omega-\omega| \leq 1\, .$$
\end{corollary}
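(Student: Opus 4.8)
The plan is to bootstrap from the two facts just established: the inequality $|P_\omega(\xi)-P_{q/p}(\xi)|\le C(1/|p|+|p\omega-q|)$ of Corollary~\ref{limitcorr}, valid in particular for every best rational approximation $q/p$ of $\omega$, and the bound $|P_\omega(\xi)-P_\Omega(\xi)|\le 5C/|p|$ derived in the proof of the preceding corollary, valid whenever $q/p$ is a best rational approximation of $\omega$ and $|\Omega-\omega|\le 1/|p|^2$. The extra ingredient is the Diophantine hypothesis, which prevents the denominators of consecutive best approximations from growing faster than a fixed power; this is what lets one convert the smallness of $|\Omega-\omega|$ into a lower bound on $|p|$, and hence into an upper bound on $5C/|p|$.

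First I would set $\varepsilon:=|\Omega-\omega|$ and assume $0<\varepsilon\le 1$ (the case $\Omega=\omega$ is trivial). Among the best rational approximations of $\omega$, I would then pick a pair $q/p$, $\mathfrak{q}/\mathfrak{p}$ of \emph{successive} best approximations with $|p|\le\varepsilon^{-1/2}<|\mathfrak{p}|$; such a pair exists because the denominators of best approximations form a strictly increasing sequence starting from $1\le\varepsilon^{-1/2}$ and tending to infinity. From $|p|\le\varepsilon^{-1/2}$ one gets $\varepsilon\le 1/|p|^2$, so the continuity estimate above applies and yields $|P_\omega(\xi)-P_\Omega(\xi)|\le 5C/|p|$, uniformly in $\xi$.

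It then remains to bound $1/|p|$ in terms of $\varepsilon$. The proposition on successive best approximations gives $|p\omega-q|<1/|\mathfrak{p}|$, and $|\mathfrak{p}|>\varepsilon^{-1/2}$ forces $|p\omega-q|<\varepsilon^{1/2}$. Combining with the Diophantine inequality $|p\omega-q|\ge\gamma/|p|^{\tau}$ gives $\gamma/|p|^{\tau}<\varepsilon^{1/2}$, hence $|p|>\gamma^{1/\tau}\varepsilon^{-1/2\tau}$ and therefore $5C/|p|<(5C/\gamma^{1/\tau})\,\varepsilon^{1/2\tau}$. Chaining the two estimates gives $|P_\omega(\xi)-P_\Omega(\xi)|<(5C/\gamma^{1/\tau})\,|\Omega-\omega|^{1/2\tau}$ for all $|\Omega-\omega|\le 1$, which is exactly the asserted local H\"older continuity with exponent $1/2\tau$; uniformity in $\xi$ is inherited from that of $C$.

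I do not expect a genuine obstacle: once the right best approximation is isolated, everything reduces to elementary manipulation of the three displayed inequalities. The one point deserving care is the selection in the second step---one must take the \emph{largest} admissible denominator $|p|\le\varepsilon^{-1/2}$, so that its successor satisfies $|\mathfrak{p}|>\varepsilon^{-1/2}$ and the Diophantine lower bound on $|p\omega-q|$ can be played off against the upper bound $1/|\mathfrak{p}|$. (At the boundary $\varepsilon=1$ this is consistent because $\gamma<1/2$ for every irrational $\omega$.)
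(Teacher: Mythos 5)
Your proof is correct and follows essentially the same route as the paper: both choose a pair of successive best rational approximations $q/p$ and $\mathfrak{q}/\mathfrak{p}$ of $\omega$ pinned by $|p|\le|\Omega-\omega|^{-1/2}<|\mathfrak{p}|$, invoke the bound $|P_\omega(\xi)-P_\Omega(\xi)|\le 5C/|p|$, and then play $|p\omega-q|<1/|\mathfrak{p}|$ against the Diophantine lower bound $|p\omega-q|\ge\gamma/|p|^\tau$ to obtain $|p|>\gamma^{1/\tau}|\Omega-\omega|^{-1/(2\tau)}$. The only difference is cosmetic: the paper first deduces $|\mathfrak{p}|<|p|^\tau/\gamma$ and then bounds $1/|\mathfrak{p}|$ by $|\Omega-\omega|^{1/2}$, whereas you first bound $|p\omega-q|$ by $|\Omega-\omega|^{1/2}$ and then apply the Diophantine inequality directly; the two chains are algebraically identical.
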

\begin{proof}
Let $\omega$ be Diophantine and $\Omega$ arbitrary with $|\Omega-\omega|\leq 1$ and choose two successive best rational approximations $q/p$ and $\mathfrak{q}/\mathfrak{p}$ of $\omega$ in such a way that $1/|\mathfrak{p}|^2 < |\Omega-\omega|\leq 1/|p|^2$. 

Then it follows that 
$$|p\omega-q|< \frac{1}{|\mathfrak{p}|} \ \mbox{and}\ |p\omega-q|\geq \frac{\gamma}{|p|^{\tau}}\ \mbox{and hence}\  |\mathfrak{p}| < |p|^{\tau}/\gamma\, .$$
As a result,
$$\left| P_{\omega}(\xi) - P_{\Omega}(\xi) \right|  \leq 5C/|p| < (5C/\gamma^{1/\tau})/|\mathfrak{p}|^{1/\tau} < (5C/ \gamma^{1/\tau}) \, |\Omega-\omega|^{1/2\tau}\, .$$
\end{proof}
\noindent The results in this section together prove Theorem \ref{th1}.

\section{Continuous dependence on parameters}\label{section robustness}
We now prove that the Peierls barrier $P_{\omega}(\xi)$ depends continuously on the local action:
\begin{proposition}\label{continuous}
Fix $\omega \in \R$ and let $S$ be a local potential satisfying conditions {\bf A}-{\bf C} of Section \ref{setting}. For all $\varepsilon>0$ there are a $\delta>0$ and a $K>0$ so that whenever $S^{\delta}$ satisfies conditions {\bf A}-{\bf C} and the estimates $\|S^{\delta} - S\|_{C^0(\X_{K})} < \delta$ and $\|S^{\delta} - S\|_{C^1(\X_K)} \leq 1$, then 
$$\sup_{\xi\in \R} | P_{\omega}^{\delta}(\xi) - P_{\omega}(\xi)| < \varepsilon\, .$$ 
\end{proposition}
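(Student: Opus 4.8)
The plan is to reduce the claim about the quasi-periodic Peierls barrier to a statement about periodic Peierls barriers, where everything is a finite-dimensional minimization problem and the perturbation estimate is a straightforward consequence of the Lipschitz bound in Proposition \ref{lipschitz}. First I would observe that for a rational $q/p$ in lowest terms, $P^{\delta}_{q/p}(\xi) - P_{q/p}(\xi)$ is a difference of two differences of minima of $W_{p,q}$ and $W^{\delta}_{p,q}$ over the finite-dimensional spaces $\X_{p,q}$ and $\{x\in \X_{p,q}: x_0=\xi\}$. Since all the relevant minimizers --- both for $S$ and for $S^{\delta}$ --- are periodic (constrained) minimizers of rotation number $q/p$, hence Birkhoff by Aubry's lemma (Proposition \ref{periodicminimizersproposition}) and Lemma \ref{variant AL}, they lie in $\X_{L+2}$ for a fixed $L\geq |\omega|+1$ that depends only on a neighborhood of $\omega$ (strictly speaking, after normalizing $\xi$ into a bounded interval and using the Birkhoff estimate (\ref{birkhoffest}), the constrained minimizers lie in $\X_{L+3}$). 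Therefore, plugging a minimizer for one potential into the action of the other and using $\|S^{\delta}-S\|_{C^0(\X_K)}<\delta$ on a single fundamental domain of $p$ terms --- but wait, $p$ is unbounded --- one must instead use that the difference of the two minima over the \emph{same} space is bounded by the $C^0$-distance summed over $p$ sites; this is $O(p\delta)$, which is useless. So the first key point is that one must avoid summing over a full period.

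The way around this, following the structure of Section \ref{proof of tl2}, is to only ever compare periodic Peierls barriers of \emph{nearby} rotation numbers so that the pieces being compared have bounded length. Concretely, I would fix a best rational approximation $q/p$ of $\omega$ with $|p|$ large enough that $C(1/|p| + |p\omega - q|) < \varepsilon/4$, using Corollary \ref{limitcorr} (with the constant $C=C(S,L)$ for $S$) so that $|P_{\omega}(\xi) - P_{q/p}(\xi)| < \varepsilon/4$. Analogously, for the perturbed potential $S^{\delta}$, as long as $\|S^{\delta}-S\|_{C^1}\leq 1$ one has a \emph{uniform} constant $C'=C(S^{\delta},L)$ --- indeed the constant $C=2rDE$ in Theorem \ref{mainthm} depends on $S^{\delta}$ only through $D=(r+1)\|S^{\delta}\|_{C^1(\X_K)} \leq (r+1)(\|S\|_{C^1(\X_K)}+1)$, so it can be chosen independently of $\delta$. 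Hence $|P^{\delta}_{\omega}(\xi) - P^{\delta}_{q/p}(\xi)| < \varepsilon/4$ too, after possibly enlarging $|p|$. It then remains to bound $|P^{\delta}_{q/p}(\xi) - P_{q/p}(\xi)|$ for this \emph{fixed} $q/p$, and here the naive $O(p\delta)$ estimate is fine because $p$ is now frozen: choosing $\delta$ small enough (depending on this fixed $p$) makes $|P^{\delta}_{q/p}(\xi)-P_{q/p}(\xi)| < \varepsilon/2$ uniformly in $\xi$, via Proposition \ref{lipschitz} applied to $W^{\delta}_{p,q}-W_{p,q}$ together with the fact that the constrained and unconstrained periodic minimizers for both potentials lie in $\X_{L+3}$ and have, say, $x_0-\xi$ bounded, so the relevant configuration space is a fixed compact set on which $\|S^{\delta}-S\|_{C^0}<\delta$ gives $|W^{\delta}_{p,q}-W_{p,q}| < p\delta$ (and one similarly bounds $|P^\delta_{q/p}(\xi)-P_{q/p}(\xi)| \le p\delta$ by comparing the two minimization problems). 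A triangle inequality then gives $|P^{\delta}_{\omega}(\xi) - P_{\omega}(\xi)| < \varepsilon$ uniformly in $\xi$, which is the claim; the value of $K$ is $K=L+2+2E$ with $L$ chosen from $\omega$ as above.

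The main obstacle --- and the thing that makes the argument work at all --- is precisely the order of quantifiers: one cannot hope for $\delta$ to be uniform in the approximating denominator $p$, so one must first spend the estimate of Theorem \ref{mainthm}/Corollary \ref{limitcorr} to get within $\varepsilon/2$ of a \emph{single} periodic barrier $P_{q/p}$ (and of $P^{\delta}_{q/p}$), pinning down $p$, and only afterwards choose $\delta$ depending on that $p$. The subsidiary technical point requiring care is the uniformity of the Aubry-Mather constants ($D$, $E$, $C$, and the width of the band $\X_{K}$ in which minimizers live) over the family of perturbations $S^{\delta}$ with $\|S^{\delta}-S\|_{C^1}\leq 1$; this is what the hypothesis $\|S^{\delta}-S\|_{C^1(\X_K)}\leq 1$ in the statement is there to secure, since all these constants were built out of $C^1$-norms of $S$ on $\X_K$ and rotation-number bounds, both of which are stable under a bounded $C^1$-perturbation. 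Finally, one should check that $S^{\delta}$ indeed satisfies conditions \textbf{A}--\textbf{C} so that the whole periodic machinery (existence of periodic $\xi$-minimizers, Lemma \ref{variant AL}, Corollary \ref{limitcorr}) applies to it --- but this is assumed in the hypotheses of the proposition, so no work is needed there.
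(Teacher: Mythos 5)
Your proposal is correct and follows essentially the same two-step structure as the paper's proof: handle the rational case by the finite-dimensional $O(p\delta)$ comparison of constrained and unconstrained minima, and for irrational $\omega$ exploit the fact that the constant $C=2rDE$ from Theorem \ref{mainthm}/Corollary \ref{limitcorr} is uniform over the $C^1$-ball $\|S^{\delta}-S\|_{C^1(\X_K)}\le 1$, so that one can first fix a best rational approximation $q/p$ (pinning down $p$) and only then choose $\delta$ depending on $p$. The only cosmetic differences are that you reduce directly to that single $q/p$ where the paper inserts an auxiliary $Q/P$, and that your invocation of Proposition \ref{lipschitz} in the rational step is superfluous (the $C^0$-bound alone gives $|W^{\delta}_{p,q}-W_{p,q}|<p\delta$, as you in fact state immediately afterward).
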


\begin{proof} Let $\varepsilon>0$ be given. We first let $\omega=Q/P$ be rational, choose $K$ so that $\mathcal{B}_{P,Q}\subset \X_K$ and assume that $||S^{\delta}-S||_{C^0(\X_K)} < \delta:= \frac{\varepsilon}{4|P|}$. Then it holds for all $x\in \B_{P,Q}$ that

\begin{equation}\label{sp1}|W^{\delta}_{P,Q}(x)-W_{P,Q}(x)|< |P|\delta = \frac{\varepsilon}{4}\, .\end{equation} 
Denoting by $x^-\in \mathcal{M}_{P,Q}$ a periodic minimizer of $W_{P,Q}$, by $x^{\delta,-}\in \mathcal{M}_{P,Q}^{\delta}$ a periodic minimizer of $W^{\delta}_{P,Q}$, by $x(\xi)\in \mathcal{M}_{P,Q}(\xi)$ a periodic $\xi$-minimizer of $W_{P,Q}$ and by $x^{\delta}(\xi)\in \mathcal{M}^{\delta}_{P,Q}(\xi)$ a periodic $\xi$-minimizer of $W^{\delta}_{P,Q}$, it then follows from minimality that
\begin{align*}& 
W_{P,Q}^{\delta}(x^{\delta,-})\leq W_{P,Q}^{\delta}(x^{-}) \leq W_{P,Q}(x^{-})+ \frac{\varepsilon}{4}\, ,\\& 
W_{P,Q}(x^{-})\leq W_{P,Q}(x^{\delta,-}) \leq W_{P,Q}^{\delta}(x^{\delta,-})+ \frac{\varepsilon}{4}\, , \\& 
W_{P,Q}^{\delta}(x^{\delta}(\xi))\leq W_{P,Q}^{\delta}(x(\xi)) \leq W_{P,Q}(x(\xi))+ \frac{\varepsilon}{4}\, ,\\& 
W_{P,Q}(x(\xi))\leq W_{P,Q}(x^{\delta}(\xi)) \leq W_{P,Q}^{\delta}(x^{\delta}(\xi))+ \frac{\varepsilon}{4}\, .
\end{align*}

\noindent From this it clearly follows that
\begin{equation}\label{sp2}|P_{Q/P}^{\delta}(\xi)-P_{Q/P}(\xi)| \leq |W_{P,Q}^{\delta}(x^{\delta}(\xi)) - W_{P,Q}(x(\xi))|+|W_{P,Q}^{\delta}(x^{\delta,-}) - W_{P,Q}(x^{-}) |   \leq \frac{\varepsilon}{2}\, .
\end{equation}
This proves the proposition when $\omega=Q/P$ is rational. 

In case $\omega$ is irrational, we choose $K$ large enough that all the sequences constructed in the proof of Theorem \ref{mainthm} are in $\X_K$ (in fact, one may check that $K=|\omega|+3+2r^2$ suffices). Then the proofs of Theorem \ref{mainthm} and Corollary \ref{limitcorr} show that there is a constant $C$ depending only on $||S||_{C^1(\X_K)}$ for which 
$$|P_{\omega}(\xi)-P_{q/p}(\xi)|\leq C \left( \frac{1}{|p|} + |p\omega - q|\right)\ \mbox{for all}\ q/p\ \mbox{with} \ |\omega-q/p| \leq 1\, .$$
In particular we may assume that this estimate holds both for the Peierls barrier of $S$ and for the Peierls barrier of $S^{\delta}$, because $||S^{\delta}||_{C^1(\X_K)} \leq ||S||_{C^1(\X_K)}+1$.  It then holds for any best rational approximation $q/p$ of $\omega$ and any $Q/P$ with $|\omega-Q/P|<1/|p|^2$ that
\begin{align}\label{blabla}
|P_{\omega}(\xi)-P_{Q/P}(\xi)| \leq 5C/|p|\ \mbox{and}\ |P^{\delta}_{\omega}(\xi)-P^{\delta}_{Q/P}(\xi)| \leq 5C/|p|\, .
\end{align}
\noindent 
It now follows from (\ref{sp2}) and (\ref{blabla}) that
$$|P^{\delta}_{\omega}(\xi)-P_{\omega}(\xi)| \leq |P^{\delta}_{\omega}(\xi) - P^{\delta}_{Q/P}(\xi)| +|P^{\delta}_{Q/P}(\xi) - P_{Q/P}(\xi)| +  |P_{Q/P}(\xi) - P_{\omega}(\xi)| \leq \frac{10C}{|p|}+\frac{\varepsilon}{2}\, .$$
Thus, if we choose $q/p$ so that $10C/|p|< \varepsilon/2$ (and $Q/P$ so that $|\omega-Q/P|<1/|p|^2$ and $\delta=\varepsilon/4|P|$), then it follows that $|P^{\delta}_{\omega}(\xi)-P_{\omega}(\xi)| < \varepsilon$. 
\end{proof}

\noindent The main corollary of Proposition \ref{continuous} is Theorem \ref{robustness} below, which was formulated a bit more weakly as Theorem \ref{thm1intro} in the introduction. It says that the collection of local potentials that do not admit a foliation of a specific rotation number, is open in the $C^1$-topology. Thus, one could say that laminations are ``robust''.
\begin{theorem}\label{robustness}
Let $\omega \in \R$ and let $S$ be a local potential satisfying conditions {\bf A}-{\bf C} of Section \ref{setting}. Assume that $\M_\omega$ is not a foliation. Then there exist a $\delta>0$ and a $K>0$ such that for all local potentials $S^{\delta}$ that satisfy conditions {\bf A}-{\bf C} and the estimates $\|S^{\delta} - S\|_{C^0(\X_{K})} < \delta$ and $\|S^{\delta} - S\|_{C^1(\X_{K})} \leq 1$, also $\M_\omega^\delta$ is not a foliation.
\end{theorem}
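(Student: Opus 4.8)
The plan is to deduce Theorem \ref{robustness} essentially immediately from Proposition \ref{continuous}, combined with the characterisations of foliations by vanishing of the Peierls barrier: Proposition \ref{obviousprop} in the rational case and Theorem \ref{foliationtheorem} in the irrational case (the latter being legitimately available now that Corollary \ref{limitcorr} has established that $P_\omega$ is well-defined). In both cases the statement ``$\M_\omega$ is a foliation'' is equivalent to ``$P_\omega \equiv 0$'', so the negation is ``$P_\omega \not\equiv 0$''.

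Concretely, suppose $\M_\omega$ is not a foliation for $S$. Then there exist $\xi_0 \in \R$ and $\varepsilon_0 > 0$ with $P_\omega(\xi_0) = \varepsilon_0$. I would then invoke Proposition \ref{continuous} with $\varepsilon := \varepsilon_0/2$: this furnishes a $\delta > 0$ and a $K > 0$ such that every local potential $S^\delta$ obeying conditions {\bf A}-{\bf C}, $\|S^\delta - S\|_{C^0(\X_K)} < \delta$ and $\|S^\delta - S\|_{C^1(\X_K)} \leq 1$ satisfies $\sup_{\xi\in\R}|P^\delta_\omega(\xi) - P_\omega(\xi)| < \varepsilon_0/2$. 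Evaluating at $\xi_0$ gives $P^\delta_\omega(\xi_0) > \varepsilon_0 - \varepsilon_0/2 = \varepsilon_0/2 > 0$, hence $P^\delta_\omega \not\equiv 0$; applying the same equivalence once more, now to $S^\delta$ (via Proposition \ref{obviousprop} respectively Theorem \ref{foliationtheorem}), we conclude that $\M^\delta_\omega$ is not a foliation. The constants $\delta$ and $K$ of the theorem are simply those produced by Proposition \ref{continuous}.

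I do not expect a genuine obstacle at this stage: the real work has already been carried out upstream. The conceptual difficulty — continuous dependence of $P_\omega$ on the local potential for irrational $\omega$, where $P_\omega$ is only defined as a limit of the $P_{q/p}$ — is resolved inside Proposition \ref{continuous}, which relies on the uniform comparison estimate of Theorem \ref{mainthm} and Corollary \ref{limitcorr} (and this is precisely where the $C^1$-bound $\|S^\delta - S\|_{C^1(\X_K)} \leq 1$ is used, to make the comparison constant $C$ uniform over the admissible perturbations). The only minor points to verify in the writeup of Theorem \ref{robustness} itself are that the supremum over $\xi$ in Proposition \ref{continuous} allows us to retain the single witness $\xi_0$, and that ``not a foliation'' unpacks to ``$P_\omega \not\equiv 0$'' for both rational and irrational $\omega$.
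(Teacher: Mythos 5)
Your proof is correct and follows exactly the same route as the paper: invoke the equivalence ``foliation $\Leftrightarrow$ $P_\omega\equiv 0$'' to extract a witness $\xi_0$ with $P_\omega(\xi_0)>0$, then apply Proposition \ref{continuous} to preserve positivity under perturbation, and conclude by the same equivalence for $S^\delta$. If anything you are slightly more careful than the paper, which cites only Theorem \ref{foliationtheorem} even though the hypothesis allows $\omega\in\Q$, where Proposition \ref{obviousprop} is the relevant characterisation.
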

\begin{proof}
Because $\M_{\omega}$ is not a foliation, by Theorem \ref{foliationtheorem} there is a $\xi$ so that $P_{\omega}(\xi)>0$. By Proposition \ref{continuous} there then exist a $\delta>0$ and a $K>0$ so that $P_{\omega}^{\delta}(\xi)>0$ for all local potentials $S^{\delta}$ with $\| S^{\delta}-S\|_{C^0(\X_K)} < \delta$ and $\| S^{\delta}-S\|_{C^1(\X_K)} \leq 1$. By Theorem \ref{foliationtheorem}, such $S^{\delta}$ do not admit a foliation of rotation number $\omega$ either. 
\end{proof}

\begin{small}
\bibliographystyle{amsplain}
\bibliography{lattice}
\end{small}

\end{document}